\DeclareMathOperator{\Aut}{Aut}
\DeclareMathOperator{\Stab}{Stab}
\DeclareMathOperator{\supp}{supp}
\renewcommand{\d}{\mathrm d}
\renewcommand{\leq}{\leqslant}
\renewcommand{\geq}{\geqslant}
\renewcommand{\le}{\leqslant}
\renewcommand{\ge}{\geqslant}
\renewcommand{\epsilon}{\varepsilon}
\renewcommand{\phi}{\varphi}
\renewcommand{\rm}{\mathrm}
\newcommand{\m}{\mathbf}
\newcommand{\id}{\rm{id}}
\newcommand{\mc}{\mathcal}
\newcommand{\inv}{^{-1}}
\newcommand{\cfg}[1]{[#1]_{\rm{com}}}
\newcommand{\dcfg}[1]{\overline{D(\cfg{#1})}}
\newcommand{\Sym}{\rm{Sym}}
\newcommand{\szn}{\Sym(\m Z,\m N)}
\newcommand{\tendv}{\xrightarrow[n\rightarrow +\infty]{}}
\newcommand{\ord}{\rm{ord}}
\theoremstyle{definition}
\newtheorem{dfn}{Definition}[section]
\theoremstyle{plain}
\newtheorem{theo}[dfn]{Theorem} 
\newtheorem{pro}[dfn]{Proposition}
\newtheorem{lem}[dfn]{Lemma}
\newtheorem{cor}[dfn]{Corollary}
\newtheorem{theointro}{Theorem}
\newtheorem{prointro}[theointro]{Proposition}
\theoremstyle{remark}
\newtheorem{rem}[dfn]{Remark}
\newtheorem{ex}[dfn]{Example}
\title{On the commensurating full group}
\author{Antoine Derimay}
\date{}
\begin{document}
\begin{abstract}
    We introduce a new Polish group, called the commensurating full group, associated to an ergodic measure-class preserving transformation of a standard atomless probability space. It is an analogue of the $\rm L^1$ full group defined by Le Maître in \cite{LM18} which does not need the transformation to preserve a measure to be defined. We prove, among others, that it is a complete invariant of flip conjugacy, and that it is quasi-isometric to the line in the sense of Rosendal.
\end{abstract}
\maketitle
\tableofcontents
\section{Introduction}
Let $(X,\mu)$ be a standard atomless probability space. The question of conjugacy of two measure preserving automorphisms has been one of the most studied problems in ergodic theory, notorious for being very hard to understand in the general case. Recently even, it has been proven that the conjugacy equivalence relation is not Borel (\cite{FRW11}). Some conjugacy invariants have been constructed, like entropy, which even turns out to be a complete conjugacy invariant for Bernoulli shifts. This shows that in some specific cases, the situation is much more manageable. 

Another invariant is the spectrum of the associated Koopman operator $f\mapsto f\circ T$ on $\rm L^2(X,\mu)$, which is a subgroup of the circle $\m S^1\subset \m C$.
We say $T$ has \emph{discrete spectrum} if there is a Hilbert basis of $\rm L^2(X,\mu)$ comprised of eigenfunctions of that operator. If $T$ has discrete spectrum, the spectrum  determines the conjugacy class of $T$ by the Halmos-von Neumann Theorem, see \cite{Hal56}.

There has thus been a common usage of associating groups to dynamical systems, like for instance the commutant $C(T)$ in $\Aut(X,\mu)$ of an ergodic transformation. A notable result in that direction is that of King \cite{Kin86}, proving that the commutant of a rank-one ergodic transformation is the weak closure of its powers. This group can be huge however, in the sense that it is not locally compact in general. We will thus mostly want to associate \emph{Polish} groups to ergodic transformations.

Another such example is that of the full group $[T]$ of $T$, made of all the transformations $S\in\Aut(X,\mu)$ whose orbits are contained in those of $T$. It is a natural invariant of conjugacy, and in fact a complete invariant of orbit equivalence for ergodic transformations, where two transformations are said to be orbit equivalent if up to conjugating, they have the same orbits. However, it turns out this Polish group is not the most useful as a complete invariant, since any two ergodic pmp transformations are in fact orbit equivalent \cite{Dye59}, so that all full groups are isomorphic. Hence we need a finer topological group in order to have more interesting invariants.\medskip 

There are two main ways of seeing elements in the full group $[T]$. One is with the \emph{cocycle}, defined for any $S\in [T]$ by 
$$S(x)=T^{c_S(x)}(x),$$
which is well-defined almost everywhere as long as $T$ is \emph{aperiodic}, that is, almost every orbit of $T$ is infinite. (Note that ergodicity implies aperiodicity.) One may then consider integrability conditions of the cocycle, which allows us to define the $\rm L^1$-full group $[T]_1$ of $T$ \cite{LM18}, or the $\phi$-integrable full group (see \cite{CJMT23}). These groups are naturally Polish, and $[T]_1$ turns out to be a complete invariant of \emph{flip-conjugacy} ($S$ and $T$ are said to be flip-conjugate if $S$ is conjugate to  either $T$ or $T\inv$).

The other way of seeing elements of the full group is by their action on the orbits of $T$: since $S\in[T]$ preserves the $T$-orbits, it induces an action on (almost) every $T$-orbit, which are all isomorphic to $\m Z$.
One may then consider new subgroups of $[T]$ like the \emph{commensurating full group} $\cfg T$ of $T$, which asks that $S$ ``preserves" the positive $T$ orbits, that is, $|T(S^\m N(x)\triangle S^\m N(x)|<\infty$ for $\mu$-a.e. $x$. This group will be the main focus of this paper. Note that, unlike the previous subgroups of $[T]$, the definition of the commensurating full group still makes sense if, instead of a measure preserving automorphism $T$, we are only given a measure-class preserving automorphism, as we only need to have a definition of what ``almost everywhere" means. One of our main results is the following (see Theorem \ref{theo: cfg T invar flip conj}):

\begin{theointro}\label{theointro: cfg complete invt of flip conj}
    Let $T_1,T_2\in\Aut(X,[\mu])$ be ergodic transformations. The following are equivalent:
    \begin{enumerate}
        \item $\cfg {T_1}$ and $\cfg{T_2}$ are isomorphic as abstract groups,
        \item $\cfg {T_1}$ and $\cfg{T_2}$ are isomorphic as topological groups,
        \item $T_1$ and $T_2$ are flip-conjugate.
    \end{enumerate}
\end{theointro}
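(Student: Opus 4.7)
The implication (2) $\Rightarrow$ (1) is tautological. For (3) $\Rightarrow$ (2), the first step is to verify the identity $\cfg{T}=\cfg{T^{-1}}$: the commensurating condition only sees the partition of each $T$-orbit into its two ends, and those ends are swapped (up to a finite error) when $T$ is replaced by $T^{-1}$. Given a flip-conjugacy $\phi T_1\phi^{-1}=T_2^{\pm 1}$, the conjugation map $S\mapsto \phi S\phi^{-1}$ then defines a group isomorphism $\cfg{T_1}\to\cfg{T_2}$. It is a topological isomorphism because the natural Polish topology on $\cfg{T_i}$ is constructed from the underlying measure-class-preserving dynamics, which $\phi$ transports.

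The heart of the theorem is (1) $\Rightarrow$ (3), which I would split into two independent steps. The first is an \emph{automatic continuity} statement: every abstract group isomorphism between commensurating full groups of ergodic transformations is automatically a homeomorphism. Following the Rosendal--Solecki framework, one reduces this to verifying the Steinhaus property for $\cfg T$. The technical input is a decomposition lemma expressing any element $S\in\cfg T$ as a uniformly bounded product of involutions, each supported on a set of small measure. Such decompositions are classical for full groups, and were established in the $\rm L^1$ setting in \cite{LM18}; here they have to be adapted to the looser commensurating setting, exploiting that each $S\in\cfg T$ is eventually a shift by a constant integer on each end of almost every orbit.

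The second step is \emph{reconstruction}: a topological isomorphism $\Phi:\cfg{T_1}\to\cfg{T_2}$ is spatially implemented by a non-singular isomorphism $\phi:(X_1,[\mu_1])\to(X_2,[\mu_2])$. The plan is to isolate inside $\cfg T$ an algebraically definable subgroup whose action on the measure algebra is transparent---a natural candidate is the subgroup of periodic elements (equivalently, involutions and their products with bounded cocycle), which by ergodicity contains enough involutions to rebuild every measure-algebra involution---and then apply a Dye-type reconstruction theorem to extract $\phi$ from $\Phi$. To conclude, one characterises the conjugacy class $\{T,T^{-1}\}$ algebraically inside $\cfg T$, for example as the ergodic elements whose cocycle in $[T]$ takes values in $\{\pm 1\}$; a property that should be expressible through the centraliser and commensuration structure. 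It then follows that $\phi$ must intertwine $T_1$ with $T_2^{\pm 1}$.

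The principal obstacle is expected to be the automatic continuity step. The commensurating condition imposes no integrability on the cocycle, so the $\rm L^1$-based decomposition estimates from \cite{LM18} cannot be imported verbatim; producing enough small-support involutions in $\cfg T$ and cutting arbitrary elements along suitable Rokhlin-like towers will require a genuinely new argument tailored to the commensurating framework. Once automatic continuity is in hand, the reconstruction step should follow the well-established Dye--Fremlin template.
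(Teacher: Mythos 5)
Your proposal for $(1)\Rightarrow(3)$ diverges from the paper in a way that creates both an unnecessary obstacle and a genuine gap. The paper does \emph{not} go through automatic continuity. It invokes Fremlin's reconstruction theorem (Theorem~\ref{theo:Fremlin}, i.e.\ \cite[384D]{Fre04}): any \emph{abstract} group isomorphism between two subgroups of $\Aut(X,[\mu])$ with ``many involutions'' is spatially implemented by some $S\in\Aut(X,[\mu])$. Since $\cfg T$ has many involutions (Lemmas~\ref{lem: invo de support donné} and~\ref{lem: invos approximées par cocycle borné}), one gets $S$ with $S\cfg{T_1}S^{-1}=\cfg{T_2}$ directly from the abstract isomorphism, and continuity is never needed. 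The ``principal obstacle'' you identify---a Steinhaus-type automatic continuity argument with small-support decompositions---is therefore simply absent from the paper's route; you are proposing to rebuild a harder version of Fremlin's theorem by hand when the general statement already applies.

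More seriously, your final step is wrong. You propose to ``characterise the conjugacy class $\{T,T^{-1}\}$ algebraically inside $\cfg T$, for example as the ergodic elements whose cocycle in $[T]$ takes values in $\{\pm 1\}$.'' That characterization is false in general: if $A\subset X$ satisfies $T^2(A)=A$, the element that acts by $T$ on $A$ and by $T^{-1}$ on $X\setminus T^{-1}(A)$ has cocycle in $\{\pm 1\}$, and for many ergodic $T$ (e.g.\ odometers) $T^2$ is not ergodic, so nontrivial such $A$ exist. But the deeper issue is that this is not even the right target. After spatial implementation one only knows that $ST_1S^{-1}$ is \emph{some} element of $\cfg{T_2}$ with the same $\m Z$-orbits as $T_2$ (this orbit agreement is a quick extra argument in the paper), and such an element need not equal $T_2^{\pm 1}$. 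The decisive dynamical input is Proposition~\ref{pro: Z-orbites -> flip conj}, the measure-class-preserving Belinskaya theorem established via Katznelson's argument (\cite[Appendix A]{CJMT23}): if $U\in\cfg T$ has the same orbits as $T$, then $U$ and $T$ are flip-conjugate. This theorem is what turns ``same orbits and commensurated'' into ``flip-conjugate,'' and no purely algebraic characterisation of $\{T,T^{-1}\}$ inside $\cfg T$ can substitute for it.
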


Hence the commensurating full group is a complete invariant of flip conjugacy, both as a topological (Polish) group, and as an abstract group, which extends the work of Le Maître (\cite{LM18},\cite{LM21}) to the measure-class preserving case. We now give a few words on the proof of Theorem \ref{theointro: cfg complete invt of flip conj}.\medskip

In the integrable case, the fact that two ergodic transformations $T_1,T_2$ with $T_2\in[T_1]_1$ should be flip-conjugate was originally proven by Belinskaya in \cite{Bel68}, although without the definition of the $\rm L^1$ full group, only with integrable cocycles. A new proof of this result was then given by Katznelson in unpublished lecture notes, which are summed up in \cite[Appendix A]{CJMT23}. This new proof is of interest to us because of its structure. It is essentially comprised of two parts, the first one where it is proven, albeit with a different vocabulary, that $[T]_1<\cfg T$, and the second one proving that if $T_i$ are ergodic, have the same orbits, and $T_2\in\cfg {T_1}$, $T_1$ and $T_2$ are flip conjugated by constructing an explicit conjugating map. This essentially gives us a part of Theorem \ref{theointro: cfg complete invt of flip conj}. The other comes from a result of Fremlin (Theorem \ref{theo:Fremlin}), stating that any abstract isomorphism between two "big enough" subgroups of $\Aut(X,[\mu])$ actually comes from a conjugation, completing the proof of Theorem \ref{theointro: cfg complete invt of flip conj}.\medskip

We now come back to the definition of $\cfg T$. An element $S$ of $[T]$ induces a permutation of the orbits of $T$, which are all isomorphic to $\m Z$ under the orbit map. We thus get a (continuous) embedding from $[T]$ to the group $\rm L^0(X,\mu,\Sym(\m Z))$ of measurable maps from $X$ to the group of permutations $\Sym(\m Z)$ of $\m Z$, where multiplication of two elements happens pointwise. As argued before, we are interested in defining subgroups of $[T]$ which might keep more information than $[T]$ itself. 
One way to do this is to choose a Polish subgroup $G$ of $\Sym(\m Z)$, and to only look at elements of $[T]$ whose image through the embedding lies in $\rm L^0(X,\mu,G)$. This is what is done here, with $G=\szn$, the \emph{commensurating group of} $\m Z$. It is the group of permutations $\sigma\in \Sym(\m Z)$ for which $$|\sigma(\m N)\triangle \m N|<\infty.$$
It can be identified with the group of homeomorphisms of the two-point compactification $\overline{\m Z}$ of $\m Z$ which preserves $+\infty$. It is then a Polish group for the compact open topology on $\rm{Homeo}(\overline{\m Z})$, see \cite[Section 4.A]{Cor16} which in turn makes $\cfg T$ into a Polish group.

The group $\szn$ admits a continuous surjective morphism $I:\szn\to\m Z$ called the index map. It counts the number of orbits of $\sigma$ going from $-\infty$ to $+\infty$ minus the number of orbits going from $+\infty$ to $-\infty$ (both these numbers are finite because $\sigma\in\szn$). Since it is invariant under conjugation by the shift in $\m Z$, ergodicity of $T$ means that the index map associated to a given permutation of a $T$-orbit given by $S$  is in fact constant, yielding a continuous morphism $I:\cfg T\to\m Z$.

Many of the properties ascribed to $[T]_1$ and $I$ in \cite{LM18} still hold in this context, since although the definition of $I$ there is completely different, they turn out to coincide. We thus get descriptions of generating sets of $\cfg T$ for instance. A nice result of \cite{LM18} which is still true in the commensurating context is the following (see \cite[Corollary 4.20]{LM18} and Theorem \ref{theo: plein de sgs égaux au dérivé}):

\begin{theointro}\label{theointro: plein de sgs égaux au dérivé}
    Let $T\in\Aut(X,[\mu])$ be ergodic. The following normal subgroups of $\cfg T$ are all equal:
    \begin{enumerate}
        \item The subgroup generated by the periodic elements,
        \item The subgroup generated by the involutions,
        \item The closure of the derived subgroup,
        \item The kernel of the index map.
    \end{enumerate}
\end{theointro}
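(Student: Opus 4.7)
The plan is to establish the cycle of containments $(2) \subseteq (1) \subseteq (4)$ and $(2) \subseteq (3) \subseteq (4)$, and then to close it by proving that $\ker I$ is generated by involutions.

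\textbf{Easy inclusions.} Involutions are periodic of order $2$, giving $(2) \subseteq (1)$. If $S \in \cfg T$ has finite order $n$, then on $\mu$-almost every $T$-orbit the induced permutation $\sigma_x \in \szn$ satisfies $\sigma_x^n = \id$, so all orbits of $\sigma_x$ are finite; in particular $\sigma_x$ has no biinfinite orbit, whence $I(\sigma_x) = 0$ and $I(S)=0$, yielding $(1) \subseteq (4)$. The index map is a continuous morphism to the abelian group $\m Z$, so $\ker I$ is closed and contains $D(\cfg T)$, hence $\overline{D(\cfg T)} \subseteq \ker I$, which is $(3) \subseteq (4)$.

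\textbf{Involutions lie in $D(\cfg T)$: $(2) \subseteq (3)$.} Let $\tau \in \cfg T$ be an involution with fundamental domain $A$. Picking a Borel partition $A = A_1 \sqcup A_2$, let $\tau_i$ be the involution swapping $A_i$ with $\tau(A_i)$ and equal to the identity elsewhere. Then $\tau = \tau_1 \tau_2$, and $\tau_1,\tau_2$ commute with disjoint supports. If one produces $g \in \cfg T$ conjugating $\tau_1$ onto $\tau_2$, then $\tau = \tau_2\tau_1 = g\tau_1 g\inv \tau_1\inv = [g,\tau_1] \in D(\cfg T)$. The existence of such a $g$ inside $\cfg T$ (not merely in $[T]$) should follow from the richness of the ambient group on sets of arbitrary Borel shape, as in the analogous argument of \cite{LM18}.

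\textbf{The main step $(4) \subseteq (2)$.} Given $S \in \ker I$, on almost every $T$-orbit $S$ induces $\sigma_x \in \szn$ with $I(\sigma_x) = 0$. Such a $\sigma_x$ decomposes into finite cycles together with an equal (finite) number of forward biinfinite orbits ($-\infty \to +\infty$) and backward biinfinite orbits ($+\infty \to -\infty$). Finite cycles are products of two involutions in $\szn$, and each forward-backward pairing of biinfinite orbits gives one further involution swapping them. This presents $\sigma_x$ as a product of involutions in $\szn$. The task is then to turn this orbit-wise picture into a measurable decomposition of $S$ itself as a product of involutions of $\cfg T$.

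\textbf{Main obstacle.} The principal difficulty lies precisely in this globalization: one must select, measurably in $x$, an enumeration of the finite cycles and a pairing between forward and backward biinfinite orbits of $\sigma_x$, and then verify that each resulting pointwise involution is genuinely an element of $\cfg T$ — that is, commensurates $\m N$ on each $T$-orbit — rather than merely an element of $\rm L^0(X,\mu,\szn)$. The commensuration is ultimately ensured by $I(\sigma_x)=0$, which keeps the relevant symmetric differences with $\m N$ finite, but packaging the decomposition measurably and controlling the (a priori $x$-dependent) number of involutions needed is delicate. I expect the argument to mirror the blueprint of \cite[Corollary 4.20]{LM18}, with the $\rm L^1$ ingredients there replaced by the topology of $\szn \cong \rm{Homeo}(\overline{\m Z},+\infty)$ with the compact-open topology.
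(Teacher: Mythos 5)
Your two chains $(2)\subseteq(1)\subseteq(4)$ and $(2)\subseteq(3)\subseteq(4)$ are fine as far as the easy inclusions go, but there are two gaps, one of which is fatal to the plan.

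For $(2)\subseteq(3)$: you propose to find $g\in\cfg T$ conjugating $\tau_1$ onto $\tau_2$ and conclude $\tau=[g,\tau_1]\in D(\cfg T)$. In general no such $g$ exists \emph{inside} $\cfg T$: the ambient group $[T]$ contains an involution swapping the two supports (this is the paper's Lemma~\ref{lem: invo qui échange A et B}), but that involution need not commensurate $\m N$ on each orbit. The paper's own argument is an approximation: it takes the swapping involution $S\in[T]$, truncates its cocycle to produce $S_n\in[T]_\infty\subset\cfg T$, and shows that suitable commutators built from the $S_n$ converge to $\tau$ using induction friendliness. This yields $\tau\in\overline{D(\cfg T)}$, which is all the theorem needs; the stronger claim $\tau\in D(\cfg T)$ is not established in the paper and your argument does not establish it either.

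The more serious issue is $(4)\subseteq(2)$, which you flag yourself. The paper does not attempt any orbitwise decomposition of $\ker I$ into involutions; it closes the cycle via $(4)\subseteq(1)$ instead, by a short algebraic argument built on Proposition~\ref{pro: cfg eng par perio & T}: $\cfg T$ is generated by its periodic elements together with $T$ (this proposition in turn rests on the periodic/almost-positive/almost-negative decomposition and on the fact that positive elements are products of induced maps $T_A$). Since $G_{per}$ is normal, every $S\in\cfg T$ is congruent to $T^n$ modulo $G_{per}$ for some $n$; your inclusion $(1)\subseteq(4)$ gives $G_{per}\subseteq\ker I$, and $I(T)=1$, so applying $I$ forces $n=I(S)$. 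Thus $S\in\ker I$ implies $n=0$, i.e. $S\in G_{per}$. This entirely sidesteps the measurable-selection difficulties you describe: to carry out your version one would have to pair off the forward and backward biinfinite $\sigma_x$-orbits measurably in $x$, with the number of such orbits unbounded, and then check commensuration of the resulting pointwise involutions; I do not see how to make that work, and it is not what the paper does. If you want to finish your proof, the cleanest fix is to prove the generation proposition and follow the paper's closing step.
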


Hence the topological derived subgroup $\dcfg T$ is an interesting group to study, as it contains "a lot" of the elements of $\cfg T$. In fact, it is possible from $\dcfg T$ to reconstruct $\cfg T$ as a whole, meaning that $\dcfg T$ is itself a complete invariant of flip-conjugacy (see Theorem \ref{theo: dcfg T invar of flip conj}).\medskip

For (some) Polish groups, there is a notion of quasi-isometry, in that there is a maximal class of norms on $G$ which are quasi-isometric to each other, see \cite{Ros22} or Definition \ref{dfn: maximal metric}. We determine here that every norm on $\dcfg T$ is bounded, meaning that $\dcfg T$ is quasi-isometric to a point. This in turn lets us prove that $\cfg T$ is quasi-isometric to $\m Z$, with the index map being one such quasi-isometry.

\begin{theointro}\label{theointro: I est une QI}
    The index map 
    $$I:\cfg T\to \m Z$$
    is a quasi-isometry.
\end{theointro}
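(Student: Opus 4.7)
The plan is to combine three ingredients: the identification $\ker I=\dcfg T$ provided by Theorem~\ref{theointro: plein de sgs égaux au dérivé}, the boundedness of $\dcfg T$ invoked just before the statement (every compatible norm on $\dcfg T$ is bounded), and the coarse-geometric principle that a continuous surjective homomorphism from a Polish group carrying a maximal metric onto $\m Z$ whose kernel has finite diameter is automatically a quasi-isometry.

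Concretely, I would equip $\cfg T$ with a compatible right-invariant metric $d$ in its maximal quasi-isometry class and first check that $I$ is large-scale Lipschitz: since $I:\cfg T\to\m Z$ is a continuous homomorphism into a discrete group, it sends $d$-bounded sets to bounded (hence finite) subsets of $\m Z$, and the maximality of $d$ upgrades this to constants $A,B>0$ with $|I(g)-I(h)|\leq A\,d(g,h)+B$ for all $g,h\in\cfg T$.

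For the reverse inequality, fix $U\in\cfg T$ with $I(U)=1$ (for instance $U=T$ itself, whose orbital action on $\m Z$ is the shift $+1$ and therefore has index $1$). Given $g,h\in\cfg T$, set $m=I(g)-I(h)$. Then $gh\inv U^{-m}\in\ker I=\dcfg T$ by Theorem~\ref{theointro: plein de sgs égaux au dérivé}, and the boundedness of $\dcfg T$ yields a constant $D>0$, independent of $g$ and $h$, with $d(gh\inv,U^m)\leq D$. Combining this with the triangle-inequality estimate $d(U^m,e)\leq|m|\,d(U,e)$ and right-invariance of $d$ gives
$$d(g,h)=d(gh\inv,e)\leq D+d(U,e)\,|I(g)-I(h)|,$$
which is the sought lower bound on $|I(g)-I(h)|$ in terms of $d(g,h)$. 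Coboundedness of the image is immediate from surjectivity of $I$, so $I$ is indeed a quasi-isometry.

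The only subtle point is the large-scale Lipschitz direction, which rests on the standard fact that continuous homomorphisms out of a Polish group carrying a maximal metric are coarsely Lipschitz—built into the construction of maximal metrics in Rosendal's framework \cite{Ros22}. The genuine new geometric input is therefore entirely in the boundedness of $\dcfg T$ established just above, together with the identification $\dcfg T=\ker I$ from Theorem~\ref{theointro: plein de sgs égaux au dérivé}.
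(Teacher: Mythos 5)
Your argument rests on the same two pillars the paper uses: the identification $\ker I=\dcfg T$ and the boundedness of $\dcfg T$, combined through the decomposition of an arbitrary element as (something in $\ker I$) times a power of $T$. The difference is one of packaging: the paper \emph{constructs} a maximal norm explicitly by setting $\|S\|=|S|+|I(S)|$ for a bounded compatible norm $|\cdot|$ and verifies maximality directly against an arbitrary compatible norm $\|\cdot\|'$, whereas you start from an abstract maximal metric $d$ and derive the two quasi-isometry inequalities, outsourcing the upper bound to the general fact that continuous homomorphisms between groups with maximal metrics are coarsely Lipschitz.

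The one real gap is at the very first step: you write ``equip $\cfg T$ with a metric in its maximal quasi-isometry class,'' which presupposes that such a class exists. This is not a triviality in Rosendal's framework — many Polish groups have no maximal metric — and it is precisely the content that the paper's explicit construction delivers for free. To close the gap you would need to observe, before invoking the maximal metric, that $\cfg T$ is locally bounded and generated by a coarsely bounded set: here $\dcfg T=\ker I$ is an \emph{open} subgroup (as the kernel of a continuous map to the discrete group $\m Z$) which is coarsely bounded by the theorem you cite, and $\cfg T=\langle\dcfg T\cup\{T\}\rangle$. Once that is said, Rosendal's criterion guarantees a maximal metric exists and your two inequalities do the rest. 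As written, however, the proof assumes what the paper is partly establishing, so it is incomplete as stated. The paper's approach is preferable precisely because it sidesteps this issue: exhibiting the norm $|S|+|I(S)|$ and checking maximality simultaneously proves existence and identifies the quasi-isometry type, with no appeal to existence criteria.
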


Combining this with the previous result, we get that $\dcfg T$ is a complete invariant of flip conjugacy, which is a \emph{bounded} Polish group.

This behaviour is quite different from that of the $\rm L^1$-full group, as in that case the maximal norm is given by 
$$\int_X|c_S|\d\mu,$$
which is very much unbounded even on $\overline{D([T]_1)}$.\medskip

Finally, we study topological simplicity results of the topological derived subgroup. Since $\cfg T$ admits a continuous surjective morphism to $\m Z$, it cannot satisfy any simplicity property. The kernel of this morphism however could satisfy such properties, and topological simplicity of the $\rm L^1$ full group was in fact proven in \cite{LM18}. We prove the same result in the case of the commensurating full group (see Theorem \ref{theo: dcfg is top simple}):

\begin{theointro}\label{theointro: dcfg is top simple}
    For any ergodic $T\in\Aut(X,[\mu])$, $\dcfg T$ is topologically simple.
\end{theointro}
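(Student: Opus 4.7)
The plan is to show that every non-trivial closed normal subgroup $N\trianglelefteq\dcfg T$ equals $\dcfg T$. By Theorem \ref{theointro: plein de sgs égaux au dérivé}, $\dcfg T$ is generated by its involutions (all of which have index $0$ since $I$ is a morphism to $\m Z$ and $2I(U)=I(U^2)=0$, so every involution already lies in $\dcfg T=\ker I$). It therefore suffices to show that $N$ contains every involution of $\dcfg T$.

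First I would pick $S\in N\setminus\{\id\}$ and, using aperiodicity of $T$, a Borel set $A$ of positive measure with $S(A)\cap A=\emptyset$. The basic commutator trick then produces, for every involution $V\in\dcfg T$ supported in $A$, the element $[S,V]=SVS^{-1}\cdot V\in N$; since $SVS^{-1}$ is an involution supported in $S(A)$ and commutes with $V$, this is itself an involution $W_V\in N$ acting as $V$ on $A$ and as $SVS^{-1}$ on the disjoint set $S(A)$. Thus $N$ already contains a large family of involutions ``doubled'' along $S$.

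The heart of the proof is to show that the family $\{[S,V]: V\text{ an involution of }\dcfg T\text{ supported in }A\}$, together with its $\dcfg T$-conjugates, generates a dense subgroup of $\dcfg T$. The plan is a Rokhlin-tower-style decomposition, following the scheme of Le Maître's proof in the $\rm L^1$ case \cite{LM18}: given an arbitrary involution $U\in\dcfg T$, chop its support into small measurable pieces $B_i$ which can each be transported into $A$ by suitable elements of $\dcfg T$, then express the corresponding piece of $U$ as a product of $\dcfg T$-conjugates of the involutions $W_V$ above, arranging the unwanted ``$S(A)$-parts'' to cancel pairwise on disjoint pieces $S(B_i)$. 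The main obstacle, and where the commensurating setting genuinely requires more care than the $\rm L^1$-setting, is to carry out this cancellation while respecting the commensurating constraint throughout: the transport elements must have trivial index, the intermediate products must remain in $\dcfg T$, and the cancellations of error terms drifting along $T$-orbits must converge in the compact-open topology inherited by $\cfg T$ from $\rm{Homeo}(\overline{\m Z})$, which is finer than mere convergence in measure.

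Once $N$ is shown to contain every involution of $\dcfg T$, a final appeal to Theorem \ref{theointro: plein de sgs égaux au dérivé} yields $N=\dcfg T$ (not even using the closedness of $N$ beyond the final density step, since the involutions already \emph{algebraically} generate $\dcfg T$), concluding the proof of topological simplicity.
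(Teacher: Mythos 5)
Your overall strategy matches the paper's: pick a non-trivial $S\in N$, find $A$ with $A\cap S(A)=\emptyset$, produce the involution $USUS^{-1}\in N$ supported in $A\sqcup S(A)$ for any involution $U$ supported in $A$, and then argue that such an involution normally topologically generates all of $\dcfg T$, from which Theorem~\ref{theointro: plein de sgs égaux au dérivé} finishes. Your observation that involutions \emph{algebraically} generate $\ker I$ is also correct (it is item~(2) of that theorem), and you correctly note that closedness of $N$ is still used to produce arbitrary involutions as limits.

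However, the middle step — showing that a single involution with small support normally topologically generates $\dcfg T$ — is the entire content, and you only gesture at it. Calling it a ``Rokhlin-tower-style decomposition'' with pairwise cancellation of $S(B_i)$-error terms names an aspiration, not an argument; you explicitly flag the cancellation/convergence difficulty as ``the main obstacle'' without resolving it. The paper closes this gap with two specific tools that are absent from your sketch. First, Lemma~\ref{lem: mm mesure+disj-> approx conj}: two involutions in $\cfg T$ with disjoint supports of equal $\nu$-measure (or any disjoint supports, in the non-measure-preserving case) are \emph{approximately conjugate} within $\dcfg T$, proved via a cutting-and-pasting involution whose cocycle is truncated to ensure membership in $\cfg T$, and then induction friendliness. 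Second, in Lemma~\ref{lem: <<invo>>=tout le monde, type II}, a dihedral group $\Gamma=\langle U,V\rangle\simeq D_8$ is set up so that a judicious commutator $U\tilde V U\tilde V$ equals an induced involution $U_{\tilde E}$ with prescribed support measure; this produces, inside the normal closure, involutions disjointly supported from and of equal $\nu$-measure to each small piece $W_n$ of the target $W$, so that the approximate-conjugacy lemma applies. (The measure-preserving and non-measure-preserving cases require separate treatment, as in Lemmas~\ref{lem: <<invo>>=tout le monde, type II} and \ref{lem: <<invo>>=tout le monde, type III}.) A careful cocycle truncation is also needed at a second point, to keep the auxiliary cutting-and-pasting involution inside $\cfg T$ rather than merely $[T]$; this is where the commensurating constraint actually bites, and your sketch does not address it. Without these, the proposal is an outline rather than a proof.
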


Algebraic simplicity of $\overline{D([T]_1)}$ was disproved in \cite{LM18} however, using non-integrability of return times of a Baire-generic subset of $\rm{MAlg}(X,\mu)$. This approach cannot work anymore since integrability does not make sense in the measure-class preserving case in the first place. Furthermore, we prove that this non-transitivity of the Boolean action does not hold when considering the commensurating full group, although only in the type $\mathrm{II}$ case (see Proposition \ref{pro: transi de cfg en pmp}).

\begin{prointro}\label{prointro: transi de cfg en type II}
    Let $T\in\Aut(X,[\mu])$ be ergodic, and assume there exists a $\sigma$-finite measure $\nu\sim\mu$ which is preserved by $T$. If $A,B$ are Borel subsets such that $\nu(A)<\nu(B)$, there is an element $S\in\cfg T$ such that $S(A)\subset B$.
\end{prointro}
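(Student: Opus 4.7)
The plan is to reduce the statement to the probability-measure-preserving case (handled separately as Proposition \ref{pro: transi de cfg en pmp}) via Kakutani induction on the set $C=A\cup B$.

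First, some standard reductions. By requiring the eventual $S$ to fix $A\cap B$ pointwise, we may replace $(A,B)$ by $(A\setminus B, B\setminus A)$ and assume $A\cap B=\emptyset$ (the strict inequality $\nu(A)<\nu(B)$ is preserved). The hypothesis forces $\nu(A)<\infty$, and by $\sigma$-finiteness we may further replace $B$ by a subset of finite measure still strictly greater than $\nu(A)$. So we may assume $A,B$ are disjoint with $\nu(A)<\nu(B)<\infty$.

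Set $C=A\sqcup B$. By Kakutani's induced transformation theorem, $T_C$ is an ergodic measure-preserving automorphism of the standard atomless probability space $(C,\nu|_C/\nu(C))$. The p.m.p.\ case then gives $S_C\in\cfg{T_C}$ with $S_C(A)\subset B$. Extend by the identity on the complement: define $S:X\to X$ by $S|_C=S_C$ and $S|_{X\setminus C}=\id$. Then $S$ is a measure-class preserving Borel automorphism of $X$ lying in $[T]$, with $S(A)\subset B$.

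It remains to verify $S\in\cfg T$. For almost every $T$-orbit $O$ (which meets $C$ by ergodicity), pick a basepoint $x_0\in O\cap C$ and identify $O\cong\m Z$ via $T^kx_0\leftrightarrow k$. Let $I=\{k\in\m Z:T^kx_0\in C\}$; enumerating $I$ in increasing order with $k_0=0$ gives a bijection $I\cong\m Z$ which sends the $T_C$-order on $O\cap C$ to the restriction of the $T$-order, and (because $x_0\in C$) sends the $T_C$-positive part to $I\cap\m N$. Since $S_C\in\cfg{T_C}$, one has $|\sigma_{S_C}(I\cap\m N)\triangle(I\cap\m N)|<\infty$; combined with the fact that $\sigma_S$ fixes $\m Z\setminus I$ pointwise and restricts to $\sigma_{S_C}$ on $I$, a direct computation yields $\sigma_S(\m N)\triangle\m N=\sigma_{S_C}(I\cap\m N)\triangle(I\cap\m N)$, which is finite. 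Hence $\sigma_S\in\szn$ on every orbit, and $S\in\cfg T$.

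The main obstacle, granted the p.m.p.\ case, is this final compatibility check: the commensurating property of $S_C$ with respect to $T_C$ lifts to the commensurating property of $S$ with respect to $T$ precisely because, with the basepoint chosen in $C$, the $T_C$-positive part of $O\cap C$ coincides with the $T$-non-negative part restricted to $C$, so the commensurating condition at $\m N_{T_C}$ transfers to the commensurating condition at $\m N$ after extending by the identity outside $C$.
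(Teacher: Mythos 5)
Your proof is correct, but it takes a genuinely different route from the paper. The paper does not pass through $T_C$: after proving Proposition~\ref{pro: transi de cfg en pmp} for a probability measure by using the Birkhoff ergodic theorem to chop $T$-orbits into finite intervals (via a set $C$ of marker points) on which $B$ outnumbers $A$, and then invoking a finite-equivalence-relation matching lemma, it simply remarks that the identical argument works when $\nu$ is $\sigma$-finite and $A,B$ have finite measure, replacing Birkhoff by the Hopf ratio ergodic theorem. You instead reduce to the p.m.p.\ statement by applying Kakutani induction to the p.m.p.\ ergodic system $(C,\nu|_C/\nu(C),T_C)$ with $C=A\sqcup B$, and then lift the resulting $S_C\in\cfg{T_C}$ back to $S\in\cfg T$ by extending with the identity; your verification that the extension lies in $\cfg T$ is correct, the key point being that once the basepoint on a $T$-orbit is chosen inside $C$, the order-preserving bijection $\m Z\cong I$ identifies $\m N$ with $I\cap\m N$, so $\sigma_S(\m N)\triangle\m N=\iota\bigl(\tau(\m N)\triangle\m N\bigr)$ is finite. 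Your initial reductions (passing to disjoint $A,B$ and shrinking $B$ so $\nu(B)<\infty$) are also correct and in fact make the argument cover the case $\nu(B)=\infty$, which the paper's remark leaves aside. Each approach has a merit: the paper's is self-contained modulo the ratio ergodic theorem, while yours reuses the already-proved probability case and incidentally establishes a useful companion to the induction-friendliness of $\cfg T$, namely that extending an element of $\cfg{T_C}$ by the identity off $C$ yields an element of $\cfg T$ (the reverse direction of the map $S\mapsto S_C$).
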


In particular, in the type probability measure-preserving case any measurable subset $A\subset X$ of measure $<\frac 12$ can be sent to a set that is disjoint of itself. This is in complete opposition with $[T]_1$ (see \cite[Corollary 4.23]{LM18}).

Hence the question of algebraic simplicity of the derived commensurating full group remains unknown, and it is unclear which answer one should expect.\medskip

\noindent\textbf{Outline of the paper.}
In Section \ref{sec: Defs+props}, we define the relevant objects of study, and prove some general facts, for instance that $\cfg T$ is Polish. In Section \ref{sec: Basic results}, we start by proving a number of decomposition results for elements of $\cfg T$, and use them to prove Theorem \ref{theointro: plein de sgs égaux au dérivé}. We also prove Proposition \ref{prointro: transi de cfg en type II}. In Section \ref{ssec: dcfg bounded} we prove the fact that $\dcfg T$ is bounded and Theorem \ref{theointro: I est une QI}, and in Section \ref{ssec: complete invts of flip conj} we prove Theorem \ref{theointro: cfg complete invt of flip conj}. Finally, in Section \ref{sec: Topo simp} we prove Theorem \ref{theointro: dcfg is top simple}.\medskip

\noindent\textbf{Acknowledgments.} The author thanks François Le Maître for suggesting the subject of this paper, as well as tirelessly answering \emph{numerous} questions.\medskip

\section{Definitions and first properties}\label{sec: Defs+props}
\subsection{Polish groups and the commensurating group}
\begin{dfn}
    A topological space $X$ is said to be \emph{Polish} if it is completely metrizable (i.e. it admits a complete metric compatible with the topology of $X$), and if it admits a countable dense subset.
\end{dfn}

For instance, any compact or locally compact space is Polish, countable products of Polish spaces with the product topology are still Polish. A subspace $Y$ of a Polish space $X$ is Polish for the induced topology if and only if $Y$ is a $G_\delta$ of $X$.
A Polish group is a topological group such that the underlying space is Polish. The family of Polish groups includes countable discrete groups, locally compact groups, but also the group $\Sym(\m Z)$ of all permutations of $\m Z$, along with the topology of pointwise convergence.\medskip

There is a notion of quasi-isometry for Polish groups, promoted by Rosendal (see for instance \cite{Ros22}), which is based on the notion of maximal distances.

\begin{dfn}\label{dfn: maximal metric}
    A compatible left-invariant metric $d$ on a Polish group $G$ is said to be \emph{maximal} if for any other compatible (left invariant) distance $d'$ on $G$, there is a constant $C>0$ such that 
    $$d'(g,h)\le C d(g,h)+C$$
    for all $g,h\in G$.
\end{dfn}
All maximal distances are quasi-isometric to one another, so this defines a natural quasi-isometry class for $G$, if a maximal distance exists. This is not always the case, and the interested reader is referred to \cite{Ros22} for details.

We will for the most part only need the following related definition:

\begin{dfn}
    A Polish group is said to be \emph{bounded} if some (equivalently any) bounded distance on $G$ is maximal. Equivalently, $G$ is bounded if for any neighborhood $V$ of $1$ in $G$, there is $n\in \m N$ such that 
    $$V^n=G.$$
\end{dfn}

We now go towards defining the commensurating group.

\begin{dfn}
    Let $\sigma\in\Sym(\m Z)$. We let $\mc L(\sigma)$ be the \emph{length} of $\sigma$, defined by
    $$\mc L(\sigma):=|\sigma(\m N)\triangle \m N|\in\m N\cup\{\infty\},$$
    where $A\triangle B=(A\cup B)\setminus (A\cap B)=(A\setminus B)\sqcup (B\setminus A)$ is the symmetric difference of the sets $A$ and $B$.
    
    We denote by $\szn$ the group of all permutations of $\m Z$ for which $\mc L(\sigma)<\infty$.
\end{dfn}

We may define a topology on $\szn$ in the following way:

\begin{dfn}
    Let $\mathrm{Comm}(\m N)$ be the set of subsets of $\m Z$ commensurated to $\m N$ (i.e. $|\m N\triangle M|<+\infty$). Then $\szn$ acts faithfully on $\mathrm{Comm}(\m N)$, and we endow $\szn$ with the group topology induced by the inclusion in $\Sym(\mathrm{Comm}(\m Z))$. 
    
    In other words, a basis of neighborhoods of $1$ in $\szn$ is given by the pointwise stabilizers $H(F)$ of a finite subset $F$ of $\mathrm{Comm}(\m N)$.
\end{dfn}

The following is a rewriting of \cite[Lemma 4.A.1]{Cor16}, but it will give us a good intuition for some later results.

\begin{lem}\label{lem: caracterisation convergence szn}
    Let $(\sigma_n)_n$ be a sequence in $\szn$. Then $\sigma_n\to 1$ in $\szn$ if and only if $\sigma_n\to 1$ in $\Sym(\m Z)$ and $\sigma_n(\m N)=\m N$ for all $n$ large enough.
\end{lem}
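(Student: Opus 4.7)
The plan is to unwind the definition of the topology on $\szn$: by definition, a basis of neighborhoods of $1$ consists of pointwise stabilizers $H(F)$ where $F\subset\mathrm{Comm}(\m N)$ is finite, so $\sigma_n\to 1$ in $\szn$ amounts to: for every finite $F\subset \mathrm{Comm}(\m N)$, one has $\sigma_n(M)=M$ for all $M\in F$ for $n$ large enough. With this in hand both implications are essentially bookkeeping.

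For the forward direction, assume $\sigma_n\to 1$ in $\szn$. Applying the criterion with $F=\{\m N\}$ immediately gives $\sigma_n(\m N)=\m N$ eventually. To obtain convergence in $\Sym(\m Z)$, fix $k\in\m Z$ and consider the finite set
\[
F_k=\bigl\{\m N,\ \m N\triangle\{k\}\bigr\}\subset\mathrm{Comm}(\m N),
\]
noting that $\m N\triangle\{k\}$ is $\m N\cup\{k\}$ if $k\notin\m N$ and $\m N\setminus\{k\}$ otherwise, both commensurate to $\m N$. By hypothesis $\sigma_n$ eventually fixes (as a set) both $\m N$ and $\m N\triangle\{k\}$, and since these two sets differ by exactly $\{k\}$, any permutation fixing them both must send $k$ to $k$. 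Hence $\sigma_n(k)=k$ eventually, which is exactly convergence to $1$ in $\Sym(\m Z)$.

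For the reverse direction, assume $\sigma_n\to 1$ in $\Sym(\m Z)$ and $\sigma_n(\m N)=\m N$ for all large $n$. Pick any finite $F=\{M_1,\dots,M_k\}\subset\mathrm{Comm}(\m N)$. For each $i$ the set $E_i:=M_i\triangle\m N$ is finite, so by pointwise convergence in $\Sym(\m Z)$ there exists $N$ such that for $n\geq N$, $\sigma_n$ fixes pointwise the finite set $E:=\bigcup_i E_i$ and satisfies $\sigma_n(\m N)=\m N$. Writing $M_i=(\m N\setminus A_i)\sqcup B_i$ with $A_i\subset\m N\cap E$ and $B_i\subset(\m Z\setminus\m N)\cap E$, the fact that $\sigma_n$ fixes $\m N$ setwise and fixes each element of $A_i\cup B_i$ pointwise gives $\sigma_n(\m N\setminus A_i)=\m N\setminus A_i$ and $\sigma_n(B_i)=B_i$, hence $\sigma_n(M_i)=M_i$. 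Thus $\sigma_n\in H(F)$ for $n\geq N$, proving $\sigma_n\to 1$ in $\szn$.

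There is no real obstacle here: the only mildly clever step is the use of $\m N\triangle\{k\}$ in the forward direction to detect individual points using only commensurated subsets, and the rest is a direct decomposition of commensurated sets into the finite symmetric difference with $\m N$.
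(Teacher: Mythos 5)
Your proof is correct, and it takes a genuinely different route from the paper's. The paper's proof is short but leans on \cite[Lemma 4.A.1]{Cor16}, which supplies a more convenient basis of identity neighborhoods $H_\m N(F)$ indexed by \emph{finite subsets $F$ of $\m Z$} (pointwise stabilizers of $F$ intersected with the setwise stabilizer of $\m N$); the content of the lemma is then an almost immediate translation. You instead work directly with the defining basis indexed by finite subsets of $\mathrm{Comm}(\m N)$ and give a self-contained argument. Your key observation in the forward direction --- that a permutation fixing both $\m N$ and $\m N\triangle\{k\}$ setwise must fix $\{k\}=\m N\triangle(\m N\triangle\{k\})$, hence fix $k$ --- is precisely what extracts pointwise convergence from the a priori weaker-looking setwise data, and is effectively a re-derivation of the nontrivial half of Cornulier's lemma. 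Your converse direction, decomposing $M=\m N\triangle(M\triangle\m N)$ and using that $\sigma(A\triangle B)=\sigma(A)\triangle\sigma(B)$, is likewise clean and correct. What the paper's citation buys is brevity; what your approach buys is a proof that the reader can check without opening \cite{Cor16}, and it implicitly exhibits why $H_\m N(F)$ with $F\subset\m Z$ finite and $H(F')$ with $F'\subset\mathrm{Comm}(\m N)$ finite generate the same neighborhood filter at the identity.
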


\begin{proof}
    We keep the notations used in \cite[4.A]{Cor16}, namely let $H_\m N(F)$ be the pointwise stabilizer of a finite set $F\subset \m Z$ intersected with the stabilizer of $\m N$.
    
    By definition, $\sigma_n\in H_\m N(\emptyset)=\Stab(\m N)$ for large $n$, and by Lemma 4.A.1 of \cite{Cor16}, if $\sigma_n\to1$ in $\szn,$ then $\sigma_n\to1$ in $\Sym(\m Z)$. 
    
    Conversely, if $\sigma_n\to1$ in $\Sym(\m Z)$ and $\sigma_n\in \Stab(\m N)$ for $n$ large enough, then for any finite $F\subset \m Z,\:\sigma_n\in H(F)$ for $n$ large enough. Since $\sigma_n\in \Stab(\m N)$ for $n$ large enough, $\sigma_n\in H_\m N(F)=H(F)\cap \Stab(\m N)$ for large $n$, and we conclude again by Lemma 4.A.1 of \cite{Cor16}.
\end{proof}

\begin{pro}
    $\szn$ along with this topology is a Polish group.
\end{pro}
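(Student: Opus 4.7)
The plan is to realise $\szn$ as a countable disjoint union of cosets of an open Polish subgroup, so that its Polish property follows at once.

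The natural candidate subgroup is $H := \{\sigma \in \szn : \sigma(\m N) = \m N\} = H(\{\m N\})$: by definition it is a basic open neighbourhood of $1$, hence an open, and consequently clopen, subgroup. As an abstract group, $H$ coincides with $\Sym(\m N) \times \Sym(\m Z \setminus \m N)$, which is Polish for the product of the two pointwise convergence topologies. The main point I expect to require care is showing that the subspace topology on $H$ inherited from $\szn$ agrees with this pointwise convergence topology. One can argue directly on basic neighbourhoods of $1$: for $M \in \mathrm{Comm}(\m N)$, the condition $\sigma(M) = M$ on $\sigma \in H$ amounts to $\sigma$ permuting the two finite sets $M \setminus \m N$ and $\m N \setminus M$, which is an open condition for pointwise convergence on $\m Z$; conversely, for any $n \in \m Z$, the pointwise stabiliser of $n$ inside $H$ contains the $\szn$-stabiliser of $\m N \triangle \{n\} \in \mathrm{Comm}(\m N)$. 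Alternatively, Lemma \ref{lem: caracterisation convergence szn} already gives the agreement on sequences converging to $1$, and first countability of both metrisable group topologies upgrades this to equality of the topologies. Either way, $H$ is a Polish subgroup.

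To conclude, observe that $\mathrm{Comm}(\m N)$ is countable, since each of its elements is determined by its finite symmetric difference with $\m N$. The coset space $\szn/H$ identifies with the orbit $\szn \cdot \m N$, which is all of $\mathrm{Comm}(\m N)$ (given $M \in \mathrm{Comm}(\m N)$, pick bijections $\m N \to M$ and $\m Z \setminus \m N \to \m Z \setminus M$ and glue them into a permutation sending $\m N$ to $M$); hence $[\szn : H]$ is countable. Picking a countable transversal $(\tau_i)_{i \in I}$ yields $\szn = \bigsqcup_{i \in I} \tau_i H$, a countable disjoint union of clopen cosets, each homeomorphic to the Polish group $H$ via left multiplication. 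This expresses $\szn$ as a Polish topological space, and since it is a topological group by construction, it is a Polish group.
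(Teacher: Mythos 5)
Your proof is correct, but it follows a genuinely different route from the paper's. The paper embeds $\szn$ into $\Sym(\mathrm{Comm}(\m N))$, which is Polish since $\mathrm{Comm}(\m N)$ is countable, and then shows the image is closed by characterizing it as the set of $g\in\Sym(\mathrm{Comm}(\m N))$ satisfying a list of explicit closed conditions (after transporting the action through the involution $A\mapsto A\triangle\m N$): preservation of cardinality of singletons and unions, plus a compatibility with $g(\m N)$. You instead isolate the open stabilizer $H=\Stab_{\szn}(\m N)$, identify its subspace topology with the pointwise topology of $\Sym(\m N)\times\Sym(\m Z\setminus\m N)$ (two of your arguments for this are both fine, the direct comparison of subbasic neighbourhoods and the sequential one via Lemma \ref{lem: caracterisation convergence szn} plus first countability of both group topologies), and then note that $H$ has countable index because $\szn$ acts transitively on the countable set $\mathrm{Comm}(\m N)$ with stabilizer $H$; hence $\szn$ is a countable disjoint union of clopen cosets homeomorphic to a Polish space. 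Both arguments ultimately rest on the countability of $\mathrm{Comm}(\m N)$. The paper's closedness argument sits more naturally inside Cornulier's framework of topologies induced by commensurating actions, whereas yours gives a cleaner structural description of $\szn$ as a countable-index extension of the familiar Polish group $\Sym(\m N)\times\Sym(\m Z\setminus\m N)$ and spares the somewhat delicate verification that the defining conditions inside $\Sym(\mathrm{Comm}(\m N))$ carve out exactly $\szn$.
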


\begin{proof}
    Note first of all that the involution $\Phi:A\mapsto A\triangle \m N$ swaps $\mathrm{Comm} (\m N)$ and the set $\mc P_f(\m Z)$ of finite subsets of $\m Z$, so that $\mathrm{Comm}(\m N)$ is countable. In particular, $\Sym(\mathrm{Comm}(\m N))$ is Polish. We now prove that $\szn$ is closed in $\Sym(\mathrm{Comm}(\m N))$.

    Let $g\in \Sym(\mathrm{Comm}(\m N))$. We conjugate the action of $g$ on $\mathrm{Comm}(\m N)$ through $\Phi$ into an action on $\mc P_f(\m Z)$ in the following way: for a finite subset $A$ of $\m Z$, let 
    $$g\cdot A:=g(A\triangle \m N)\triangle g(\m N).$$
    With this notation in hand, if we manage to prove that 
    \begin{eqnarray*}
        \szn &=&\{g\in\Sym(\mathrm{Comm}(\m N)), \forall A,B\in\mc P_f(\m Z), |g\cdot A|=|A|, \\
        &&g\cdot(A\cup B)=g\cdot A\cup g\cdot B\text{ and } \bigcup_{n\in\m N}g\cdot\llbracket1,n\rrbracket=g(\m N)\}
    \end{eqnarray*}
    we would be done, as all of these conditions clearly are closed in $\Sym(\mathrm{Comm}(\m N))$.\medskip 

    The fact that $LHS\subset RHS$ is immediate, as for $g\in\szn$, $g\cdot A=g(A)$. Let now $g\in RHS$. For $k\in \m Z$, let $\sigma(k)$ be such that $g\cdot\{k\}=\{\sigma(k)\}$. The map $\sigma:\m Z\to\m Z$ has to be injective since $g$ is, and 
    $$g\cdot[g\inv (g(\m N)\triangle \{n\})\triangle \m N]=\{n\}$$
    so $\sigma\in\Sym(\m Z)$. The conditions directly imply that for $A\subset \m Z$ finite $g\cdot A=\sigma(A)$, and $\sigma(\m N)=g(\m N)$. Finally, by definition of $g\cdot A$ for finite $A$,
    $$g(A\triangle \m N)=g\cdot A\triangle \m N=\sigma(A)\triangle \sigma(\m N)=\sigma(A\triangle \m N).$$
    Hence the action of $g$ on $\rm{Comm}(\m N)$ comes from an action of some $\sigma\in\Sym(\m Z)$, and $\sigma\in \szn$ since the action preserves $\rm{Comm}(\m N)$.
\end{proof}

\begin{dfn}
    Let $\sigma\in\szn.$ We define the \emph{index map} $I:\szn\to\m Z$ as
    $$I(\sigma):=|\m N\setminus \sigma\m N|-|\sigma\m N\setminus \m N|.$$
\end{dfn}
Note that replacing the $-$ in the above definition by a $+$ gives $\mc L(\sigma)$ which is finite by definition. In particular, the above quantity is well defined.

It can be easily checked that $I$ is a continuous surjective morphism from $\szn$ to $\m Z$.\medskip

The following is not useful for any proofs later on, but might be a more visual description of the index map.

\begin{lem}
    $I(\sigma)$ is also the number of $\sigma$-orbits going from $-\infty$ to $+\infty$, minus the number of $\sigma$-orbits going from $+\infty$ to $-\infty$.
\end{lem}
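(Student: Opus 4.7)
The strategy is to rewrite $I(\sigma)$ as a sum of contributions indexed by the $\sigma$-orbits. To set this up, define $E := \{n \in \m N : \sigma(n) \notin \m N\}$ (``exits'' of $\m N$) and $F := \{n \in \m Z \setminus \m N : \sigma(n) \in \m N\}$ (``entries''). A short bijective argument shows that $\sigma$ restricts to bijections $E \to \sigma(\m N) \setminus \m N$ and $F \to \m N \setminus \sigma(\m N)$, so $|E| = |\sigma(\m N) \setminus \m N|$ and $|F| = |\m N \setminus \sigma(\m N)|$. In particular $E$ and $F$ are finite, and
$$I(\sigma) = |F| - |E|.$$

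Next I would argue that every infinite $\sigma$-orbit has well-defined limits in $\{-\infty, +\infty\}$ at both ends. Indeed, along any orbit the iterates $(\sigma^n(x))_n$ switch between $\m N$ and $\m Z \setminus \m N$ only at indices $n$ for which $\sigma^n(x) \in E \cup F$; since $|E \cup F| < \infty$ and the orbit is injective, only finitely many indices have this property. Hence for $n$ sufficiently positive (resp. negative) the orbit stays on a single side of the partition, and as any infinite subset of $\m Z$ is unbounded the iterates must tend to $+\infty$ or $-\infty$ on each side. This classifies infinite orbits into four types according to the pair $(\lim_{n \to -\infty}\sigma^n(x), \lim_{n \to +\infty}\sigma^n(x))$; call those of type $(-\infty,+\infty)$ \emph{progressive} and those of type $(+\infty,-\infty)$ \emph{regressive}. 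Each progressive orbit contains at least one point of $F$ (namely the last entry into $\m N$), so by disjointness of orbits there are at most $|F|$ progressive orbits, and symmetrically at most $|E|$ regressive ones.

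Finally, I would compute the contribution of each orbit to $|F| - |E|$. Along a finite orbit the iterates return to their starting point, so entries into $\m N$ and exits from $\m N$ are equinumerous. On an infinite orbit of type $(-\infty,-\infty)$ or $(+\infty,+\infty)$ both tails lie on the same side of the partition, so entries and exits are again equal. A progressive orbit has exactly one more entry than exit, contributing $+1$ to $|F| - |E|$, while a regressive orbit contributes $-1$. Summing these contributions over the (disjoint) orbits of $\sigma$ gives
$$I(\sigma) = |F| - |E| = \#\{\text{progressive orbits}\} - \#\{\text{regressive orbits}\},$$
which is the desired equality. The only genuine subtlety in this plan is the well-definedness of the end behaviour of an infinite orbit; once that is in place the rest is straightforward bookkeeping.
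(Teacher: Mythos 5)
Your proposal is correct and takes essentially the same route as the paper's (very brief) proof: decompose $I(\sigma)=|\m N\setminus\sigma\m N|-|\sigma\m N\setminus\m N|$ orbit by orbit, observe that periodic orbits and infinite orbits with both ends on the same side contribute zero, and that the remaining two types contribute $\pm 1$. You have simply spelled out the bookkeeping (the bijections $E\to\sigma(\m N)\setminus\m N$, $F\to\m N\setminus\sigma(\m N)$ and the tail-stabilization argument) that the paper compresses into a single sentence.
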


\begin{proof}
    This result comes from the observation that periodic orbits cross the origin the same amount of times in both directions, and the same holds for infinite orbits that go from $\pm\infty$ to $\pm\infty$.
\end{proof}

\subsection{Ergodic theory and spaces of measurable maps}
This section was heavily inspired by \cite{LM18}, as well as \cite{KLM15}. More details concerning those topics can be found there.
Let $(X,\mu)$ be a standard probability space, i.e. a standard Borel space equipped with a Borel non-atomic probability measure $\mu$. All such spaces are measure isomorphic to the interval $[0,1]$ equipped with the Lebesgue measure. Throughout the paper, $(X,\mu)$ will always denote a standard probability space.

$\Aut(X,\mu)$ denotes the group of automorphisms of $(X,\mu)$ up to measure $0$, that is the set of measurable bijections $T:X\to X$ such that $T_*\mu=\mu$, where $T_*\mu$ is defined by $T_*\mu(A)=\mu(T\inv A)$ for any Borel set $A\subset X$, where two functions are identified if they coincide up to a set of measure $0$. It is a Polish group for the uniform topology, the topology generated by the uniform distance
$$d_u(S,T)=\mu\{x\in X,S(x)=T(x)\}.$$

\begin{dfn}
    Two Borel $\sigma$-finite measures $\mu,\nu$ are said to be \emph{equivalent} if they have the same null sets, that is for any $A\subset X$, $\mu(A)=0\iff\nu(A)=0$, which we write $\mu\sim\nu$.
    
    It is easily seen that $\sim$ is an equivalence relation, and we denote by $[\mu]$ the class of $\mu$, which is called the \emph{measure class} of $\mu$.
\end{dfn}

Note that the Radon-Nikodym theorem states that if $\mu\sim\nu$, there is a measurable function $f:X\to \m R_{>0}$ such that for any measurable subset $A\subset X$, we have
$$\nu(A)=\int_Af\d\mu.$$
$f$ is unique up to measure $0$, and is called the Radon-Nikodym derivative of $\nu$ with respect to $\mu$, and is denoted $\dfrac{\d\nu}{\d\mu}$.

\begin{dfn}
    We denote by $\Aut(X,[\mu])$ the group of \emph{measure-class preserving automorphisms} of $(X,\mu)$, that is the group of bijections $T:X\to X$ such that $T_*\mu\sim\mu$, where two maps are identified if they coincide almost everywhere.
\end{dfn}

\begin{dfn}
    An automorphism $T\in \Aut(X,[\mu])$ is said to be \emph{ergodic} if any $T$-invariant Borel subset $A\subset X$ is either null or conull.
\end{dfn}

Note that the notion of ergodicity depends only on the measure class $[\mu]$, and not on $\mu$ itself.\medskip

For a subset $A\subset X$ and $T\in\Aut(X,[\mu])$ such that any $T$-orbit which encounters $A$ encounters it infinitely often in both directions, we may define the \emph{induced map} $T_A\in\Aut(X,[\mu])$ supported on $A$ in the following way.

For $x\in A$, let $n_{A,T}(x)$ be the \emph{return time} of $x$ to $A$, i.e. the smallest $n>0$ such that $T^n(x)\in A$, which exists (a.e.) by assumption. The map $T_A$ is then defined by
$$T_A(x)=T^{n_{A,T}(x)}(x).$$
Note that the hypothesis on $T$ and $A$ is automatically verified if $A$ is $T$-invariant, or if $T$ is ergodic.

\begin{dfn}
    Let $Y$ be a Polish space. We let $\mathrm L^0(X,\mu,Y)$ be the space of measurable maps from $X$ to $Y$, where two maps which coincide almost everywhere are identified. We will write $\mathrm L^0(X,Y)$ if the measure class is unambiguous.
    
    This space can be endowed with the topology of convergence in measure, which makes $\mathrm L^0(X,\mu,Y)$ into a Polish space. It has the following basis of neighborhoods:
    $$V_\epsilon(f)=\{g\in \mathrm L^0(X,\mu,Y),\mu\{x\in X,d_Y(f(x),g(x)< \epsilon\}>1-\epsilon\}$$
    where $d_Y$ is a distance compatible with the topology on $Y$. The topology it generates does not depend on the choice of $d_Y$, nor on the choice of the measure $\mu$ in the class $[\mu]$ (see \cite{Moo76}).
\end{dfn}

Moreover, if $Y=G$ happens to be a Polish group, then $\mathrm L^0(X,\mu,G)$ is a group where multiplication is just pointwise, and the topology defined previously makes it a Polish group.

\subsection{The commensurating full group}

\begin{dfn}
    Let $T\in\Aut(X,[\mu])$. We denote by $[T]$ the \emph{full group} of $T$, that is the subgroup comprised of the maps $S\in \Aut(X,[\mu])$ such that for a.e. $x\in X$, 
    $$S(x)=T^{c_S(x)}(x)$$
    for some integer $c_S(x)$. Any (measurable) map $c_S:X\to \m Z$ which verifies the above condition is called a \emph{cocycle} of $S$ with respect to $T$. It is unique if the action of $T$ on $X$ is essentially free, for instance when $T$ is ergodic.
\end{dfn}
We also define the pseudo-full group $[[T]]$ of  $T$ as the groupoid of partial maps 
$$\{S:A\to B, A,B\subset X, \forall x\in A,S(x)\in T^\m Z(x)\},$$
where we again identify maps which coincide almost everywhere.

This definition will be useful on a couple of occasions.

Note that if $T$ preserves some $\sigma$-finite measure $\nu\sim\mu$, then so does any $S\in[T]$. Indeed, if we let $A_n=\{x\in X, S(x)=T^n(x)\}$, then the $A_n,n\in\m Z$ form a partition of $X$, so that 
$$\nu(S\inv(B))=\sum_{n\in\m Z}\nu(S\inv(B\cap A_n))=\sum_{n\in\m Z}\nu(T^{-n}(B\cap A_n))=\sum_{n\in\m Z}\nu(B\cap A_n)=\nu(B).$$

\begin{rem}\label{rem: general def of full group}
    The previous reasoning leads us to the general notion of a full group. A group $\m G\le \Aut(X,[\mu])$ is said to be \emph{full} if it is stable under countable cutting and pasting. In other words, $\m G$ is full if for any countable partition $$X=\bigsqcup_{i\in I} A_i$$  and any $\{g_i\in G,i\in I\}$ such that the $(g_iA_i)_{i\in I}$ still form a partition of $X$, the element $g\in \Aut(X,[\mu])$ defined by the reunion of the $g_i|_{A_i}$ is still in $\m G$. 
    
    The full group of $T$ could thus also be defined as the smallest full group containing $T$.
\end{rem}

Recently, F. Le Maître introduced in \cite{LM18} a variant of the full group, called the integrable full group, or $\mathrm L^1$-full group. It is defined as follows:

\begin{dfn}
    Let $T\in\Aut(X,\mu)$ be aperiodic. The $\mathrm L^1$\emph{-full group} of $T$ is the group $[T]_1$ of automorphisms $S\in [T]$ such that 
    $$\int_X |c_S(x)|\d \mu<\infty.$$
\end{dfn}
The $\mathrm L^p$-full group $[T]_p$ is defined in the same way, it is the group of $S\in[T]$ such that $c_S\in \mathrm L^p(X)$.\medskip

Note that this definition depends on $\mu$ itself, as integrability is a notion that varies inside a measure class, (with the exception of the case $p=\infty$). In particular, all the results Le Maître obtained on $[T]_1$ are only valid for pmp maps.

The following construction turns out to be closely related to $[T]_1$, and so generalizes it to any $T\in \Aut(X,[\mu])$.\medskip

Note that when $T$ is aperiodic, every orbit is in bijection with $\m Z$, and so since any $S\in [T]$ preserves the $T$-orbits, they induce a permutation of $\m Z$. We are interested in those whose induced permutations commensurate $\m N$. More precisely, let $\Phi:[T]\to \mathrm L^0(X,\Sym(\m Z))$ be defined by
$$\Phi(S)(x)(n)=n+c_S(T^n(x)).$$
By \cite[Proposition 13]{KLM15}, $\Phi$ is a topological group embedding. 
\begin{dfn}\label{dfn: cfg}
We let 
$$\cfg T:=\Phi\inv(\mathrm L^0(X,\szn)).$$
We endow $\cfg T$,with the pullback of the topology of $\mathrm L^0(X,\szn)$. This group is called the \emph{commensurating full group} of $T$, and will be the main object of study in this paper.
\end{dfn}

The main interest of $[T]_1$ was Beliskaya's Theorem (see \cite{Bel68}, \cite{LM18}, \cite{CJMT23}):

\begin{theo}
    If $T_1,T_2\in\Aut(X,\mu)$ are ergodic such that $T_1\in[T_2]_1$ and if $T_1$ and $T_2$ have the same orbits, then $T_1$ and $T_2$ are flip conjugate: either $T_1$ and $T_2$ are conjugate, or $T_1$ and $T_2\inv$ are conjugate.
\end{theo}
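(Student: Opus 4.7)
The plan is to follow Katznelson's two-part strategy (see \cite[Appendix A]{CJMT23}): first show the inclusion $[T_2]_1 \subseteq \cfg{T_2}$, and then apply the commensurating analogue of Belinskaya's theorem (to be proven later in the paper, with $T_1 \in \cfg{T_2}$ as the sole hypothesis beyond the two transformations being ergodic with the same orbits) to deduce flip conjugacy. The main obstacle is therefore pushed into the commensurating version, where one must produce an explicit flip-conjugating map from the purely combinatorial commensuration data; the step I would carry out at this stage is the softer integrable-to-commensurating half, which is essentially a Borel--Cantelli argument.

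To show $[T_2]_1 \subseteq \cfg{T_2}$, fix $S \in [T_2]_1$ with integrable cocycle $c = c_S$. Under the parametrization $n \leftrightarrow T_2^n x$ of the $T_2$-orbit of $x$, the permutation of $\m Z$ induced by $S$ is $\sigma_x(n) = n + c(T_2^n x)$, and the goal is $\sigma_x \in \szn$ for a.e.\ $x$. A direct count of $\sigma_x(\m N) \setminus \m N$ and $\m N \setminus \sigma_x(\m N)$ in terms of the sign of $n + c(T_2^n x)$ yields
$$|\sigma_x(\m N) \triangle \m N| \leq \#\{n \in \m Z : |c(T_2^n x)| \geq |n|\},$$
so it suffices to prove the finiteness of the right-hand side almost everywhere.

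Since $T_2$ preserves $\mu$, one has $\mu\{x : |c(T_2^n x)| \geq |n|\} = \mu\{|c| \geq |n|\}$ for every $n \in \m Z$. The layer-cake identity for the nonnegative integer-valued function $|c|$ then gives
$$\sum_{n \in \m Z} \mu\{|c| \geq |n|\} \,=\, 1 + 2 \int_X |c| \, \d\mu \,<\, \infty$$
because $c \in \mathrm L^1$. Borel--Cantelli yields that for almost every $x$ only finitely many $n$ satisfy $|c(T_2^n x)| \geq |n|$, completing the proof of $[T_2]_1 \subseteq \cfg{T_2}$ and hence of Belinskaya's theorem, modulo the commensurating version that forms one of the main contributions of this paper.
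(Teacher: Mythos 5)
Your argument is correct and follows exactly the two-step Katznelson strategy that the paper attributes to \cite[Appendix A]{CJMT23}. The paper does not actually give its own proof of this theorem — it is stated as background with a citation — so your Borel--Cantelli derivation of $[T_2]_1\subset\cfg{T_2}$ correctly fills in a step the paper leaves implicit (cf.\ the subsequent unproved proposition $[T]_p<[T]_1<\cfg T$), and the second half you defer corresponds to Proposition \ref{pro: Z-orbites -> flip conj}, whose crux the paper likewise cites from \cite[Theorem A.1]{CJMT23} rather than reproving.
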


In fact an alternative proof of Belinskaya's result was later provided by Katznelson (in unpublished course notes, see \cite{CJMT23}), and this proof goes in two steps, first proving that $[T]_1<\cfg T$ and then proving the same result as Belinskaya where the assumption $T_1\in[T_2]_1$ has been replaced by $T_1\in\cfg{T_2}$. The details of this proof can be found in \cite[Appendix A]{CJMT23}.\medskip

From this we deduce that
\begin{pro}
    For any $p\in[1,\infty],$ we have 
    $$[T]_p<[T]_1<\cfg T.$$
\end{pro}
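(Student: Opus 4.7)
The plan splits into two inclusions. The first, $[T]_p \subset [T]_1$ for $p \in [1,\infty]$, relies only on the finiteness of $\mu$. If $S \in [T]_p$ with $p \in [1,\infty)$, Hölder's inequality gives
$$\int_X |c_S| \d\mu \le \bigl(\int_X |c_S|^p \d\mu\bigr)^{1/p} \mu(X)^{1-1/p} < \infty;$$
for $p = \infty$, the cocycle $c_S$ is essentially bounded and therefore integrable on the probability space $(X,\mu)$. In either case $S \in [T]_1$.

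The second inclusion $[T]_1 \subset \cfg T$ is the nontrivial step, but as already flagged in the excerpt it is precisely the first part of Katznelson's proof of Belinskaya's theorem, carried out in full in \cite[Appendix A]{CJMT23}. For completeness, here is the idea I would follow: given $S \in [T]_1$ with $T$ ergodic and pmp, I want to show that for almost every $x$ the permutation $\Phi(S)(x)\colon n \mapsto n + c_S(T^n x)$ lies in $\szn$, i.e. $|\Phi(S)(x)(\m N)\triangle \m N|<\infty$. Writing out the symmetric difference shows this follows as soon as $|c_S(T^n x)| < |n|$ for all but finitely many $n$. Since $T$ preserves $\mu$ and $|c_S|$ is integer-valued and nonnegative,
$$\sum_{n \in \m Z\setminus\{0\}} \mu\{x : |c_S(T^n x)| \ge |n|\} = 2\sum_{k \ge 1} \mu\{|c_S| \ge k\} = 2\int_X |c_S|\d\mu < \infty,$$
and the Borel--Cantelli lemma gives the desired pointwise bound for a.e. $x$.

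The only real obstacle is this second inclusion, and it has already been settled in the literature; in our setting the proposition is obtained by concatenating the two inclusions.
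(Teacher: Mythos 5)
Your proof is correct and follows the same route the paper takes: the paper obtains $[T]_1 < \cfg T$ as the first step of Katznelson's proof of Belinskaya's theorem, citing \cite[Appendix A]{CJMT23}, and does not spell out $[T]_p < [T]_1$, which is the standard Hölder/essential-boundedness observation you give. Your Borel--Cantelli sketch (using measure-invariance to rewrite $\sum_n \mu\{|c_S\circ T^n|\ge |n|\}$ as $2\int|c_S|\d\mu$) is precisely the argument from that reference, so you have merely made the citation self-contained.
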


We now give a couple basic properties of $\cfg T$.

\begin{pro}
    $\cfg T$ is a Polish group.
\end{pro}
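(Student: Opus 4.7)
The plan is to realize $\cfg T$ as a closed subgroup of the Polish group $\rm L^0(X,\szn)$, via the injection $\Phi$. First, $\szn$ is Polish by the previous proposition, and hence $\rm L^0(X,\szn)$ is a Polish group by the standing fact recalled at the end of the previous subsection. Since aperiodicity of $T$ makes the cocycle $c_S$ unique, $\Phi$ is injective, and by definition $\cfg T$ carries the pullback of the topology of $\rm L^0(X,\szn)$; hence $\cfg T$ is homeomorphic to $\Phi(\cfg T)$, and it suffices to show this image is closed.

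I would next characterize $\Phi(\cfg T)$ inside $\rm L^0(X,\szn)$ by the cocycle-type identity
$$f(Tx)(n) = f(x)(n+1) - 1 \qquad \text{for all } n\in\m Z\text{ and a.e. }x\in X,$$
which expresses that $f(Tx)$ and $f(x)$ are related by conjugation by the shift on $\m Z$. One direction is immediate by direct computation from $\Phi(S)(x)(n)=n+c_S(T^nx)$. For the converse, starting from $f$ satisfying the identity, the natural candidate is $S(x):=T^{f(x)(0)}(x)$; iterating the identity yields $f(x)(n)=n+f(T^nx)(0)$, so $\Phi(S)=f$, and $S\in\cfg T$ because $f$ already takes values in $\szn$. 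Bijectivity and measure-class preservation of $S$ follow from $f(x)$ being a permutation of $\m Z$ together with $S$ being piecewise a power of $T$ on each level set $\{x:f(x)(0)=k\}$.

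Finally, for each fixed $n\in\m Z$, the maps $f\mapsto[x\mapsto f(Tx)(n)]$ and $f\mapsto[x\mapsto f(x)(n+1)-1]$ are continuous from $\rm L^0(X,\szn)$ to the Polish space $\rm L^0(X,\m Z)$: the point evaluation $\sigma\mapsto\sigma(n)$ on $\szn$ is continuous by Lemma~\ref{lem: caracterisation convergence szn}, and precomposition by $T$ is continuous in measure. The set where the two maps agree is therefore closed, and $\Phi(\cfg T)$, being the intersection over $n\in\m Z$ of such sets, is a closed subset of the Polish space $\rm L^0(X,\szn)$, hence Polish. The main subtlety to navigate is the reconstruction step: one needs the cocycle identity to really force $S$ to be a well-defined bijection mod null sets, but this amounts to the images of the level sets repartitioning $X$, which follows from $f(x)\in\Sym(\m Z)$.
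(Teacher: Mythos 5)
Your proof is correct, but it takes a genuinely different and more self-contained route than the paper's. The paper's argument is short: it uses as a black box the fact, due to Kittrell--Tsankov and recorded as~\cite[Proposition 13]{KLM15}, that $\Phi([T])$ is closed in $\mathrm L^0(X,\Sym(\m Z))$; it then simply observes that the inclusion $\mathrm L^0(X,\szn)\hookrightarrow \mathrm L^0(X,\Sym(\m Z))$ is continuous (the topology on $\szn$ refines the subspace topology from $\Sym(\m Z)$), so that $\Phi(\cfg T)=\Phi([T])\cap\mathrm L^0(X,\szn)$ is closed in $\mathrm L^0(X,\szn)$. You instead reprove the closedness of the image directly, by characterizing $\Phi([T])$ via the shift-equivariance identity $f(Tx)(n)=f(x)(n+1)-1$, showing this is a countable intersection of equalizers of continuous maps into $\mathrm L^0(X,\m Z)$. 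This is a valid and arguably more illuminating route, since it makes the image explicit rather than citing an external result.

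Two remarks on rigour, both fixable. First, there is a small logical circularity in the reconstruction step: you write $\Phi(S)=f$ before having established that $S$ is a well-defined element of $[T]$, while $\Phi$ is only defined on $[T]$; the clean order is to first verify that $S(x)=T^{f(x)(0)}(x)$ is measurable, injective a.e.\ (using aperiodicity of $T$ and injectivity of $f(x)$), surjective a.e.\ (using surjectivity of $f(x)$ and the iterated identity $f(T^{-m}y)(0)=f(y)(-m)+m$), and measure-class preserving (since it is piecewise a power of $T$), and only then compute $\Phi(S)=f$. You flag this at the end, so it reads more as an acknowledged informality than an error. Second, continuity of precomposition $f\mapsto f\circ T$ on $\mathrm L^0$ when $T$ is merely measure-class preserving does hold, but is worth saying a word about: it is precisely because the topology of convergence in measure depends only on the measure class, a fact the paper records in its definition of $\mathrm L^0(X,\mu,Y)$.
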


\begin{proof}
    We will prove that $\Phi(\cfg T)$ is closed in $\mathrm L^0(X,\szn)$. To that end, consider the following commutative diagram made of injections:

    \[\begin{tikzcd}
	\cfg T && \mathrm L^0(X,\szn) \\
	\\
	\: [T] && \mathrm L^0(X,\Sym(\m Z))
	\arrow["{\Phi}", hook, from=1-1, to=1-3]
	\arrow[hook', from=1-1, to=3-1]
	\arrow[hook', from=1-3, to=3-3]
	\arrow["{\Phi}", hook, from=3-1, to=3-3]
    \end{tikzcd}\]
    We know $\Phi([T])$ is closed in $\mathrm L^0(X,\Sym(\m Z))$, and that the topology of $\mathrm L^0(X,\szn)$ is finer than that of $\mathrm L^0(X,\Sym(\m Z))$. Hence $\Phi([T])\cap \mathrm L^0(X,\szn)=\Phi(\cfg T)$ is closed in $\mathrm L^0(X,\szn)$.
\end{proof}

Let $S\in\cfg T$, and $x\in X$. Note that $I(\Phi(S)(x))=I(\Phi(S)(T(x)))$, and since $I:\szn\to\m Z$ is continuous, $x\mapsto I(\Phi(S)(x))$ is measurable. Hence by ergodicity of $T$, $I(\Phi(S)(x))$ does not depend on $x$.

\begin{dfn}
    Let $I(S)$ be this common value. The map $I$ is called the \emph{index map} $I:\cfg T\to\m Z$, which is easily checked to be a continuous morphism.
\end{dfn}

Note that by \cite[Proposition 4.17]{LM18}, this definition coincides with the one of \cite[Definition 4.3]{LM18} on $[T]_1$.\medskip

The following notions were first defined in \cite{LMS23} in the context of pmp actions, but they can just as well be defined in the measure-class preserving case, and they will turn out to be useful on a variety of occasions throughout the paper.

\begin{dfn}
    A subgroup $\m G\le \Aut(X,[\mu])$ is said to be \emph{finitely full} if for any finite partition $X=A_1\sqcup\cdots \sqcup A_n$ and any $g_1,\ldots g_n\in\m G$ such that the sets $g_iA_i$ still form a partition of $X$, the element $g\in\Aut(X,[\mu])$ obtained as the reunion of the $g_i|_{A_i}$ is still an element of $\m G$.
\end{dfn}
(See Remark \ref{rem: general def of full group} for the origin of the name.)

\begin{dfn}
    A finitely full group $\m G$ is said to be \emph{induction friendly} if it is stable under taking induced transformations whenever allowed to and, if $U \in \m G$ and $(A_n)_n$ is an increasing sequence of $U$-invariant sets such that $\bigcup_nA_n = A$, then $U_{A_n}\to U_A$.
\end{dfn}

\begin{dfn}
    A subgroup $\m G\le \Aut(X,[\mu])$ is said to be \emph{aperiodic} if it admits a countable subgroup whose action on $X$ has no finite orbits.
\end{dfn}

Of course, those definitions would not be the most useful if they did not apply to our object of study:

\begin{pro}
    $\cfg T$ is a finitely full induction friendly aperiodic group.
\end{pro}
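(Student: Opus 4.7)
The plan is to verify the three properties in turn. Aperiodicity is immediate: $T$ itself lies in $\cfg T$ since $\Phi(T)(x)$ equals the shift $n\mapsto n+1$, which belongs to $\szn$; hence $\langle T\rangle$ is a countable subgroup of $\cfg T$, and by ergodicity of $T$ it has no finite orbits. For finite fullness, given a finite partition $X=A_1\sqcup\cdots\sqcup A_n$ and $g_1,\ldots,g_n\in\cfg T$ such that the $g_iA_i$ still partition $X$, I would first form the pasting $g=\bigsqcup_i g_i|_{A_i}$, which lies in the classical full group $[T]$. Writing $\sigma_i=\Phi(g_i)(x)\in\szn$ for a.e.\ $x$, one checks pointwise that $\Phi(g)(x)(m)=\sigma_i(m)$ whenever $T^m(x)\in A_i$, and the inclusion
$$\Phi(g)(x)(\m N)\triangle\m N\subset\bigcup_{i=1}^n\bigl(\sigma_i(\m N)\triangle\m N\bigr)$$
shows that the left-hand side is finite; measurability of $\Phi(g)$ as an $\szn$-valued map then follows from the partition structure together with measurability of the $\Phi(g_i)$, so $g\in\cfg T$.

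To show stability under induced transformations, I would fix $U\in\cfg T$ and a Borel $A$ on which $U_A$ is defined. Since its cocycle is integer-valued, $U_A\in[T]$, and for a.e.\ $x$, setting $\sigma=\Phi(U)(x)$ and $B=\{m:T^m(x)\in A\}$, the permutation $\Phi(U_A)(x)$ coincides with the induced transformation of $\sigma$ on $B$, extended by the identity on $\m Z\setminus B$; call it $\sigma_B$. The problem then reduces to the purely combinatorial statement that if $\sigma\in\szn$ and $\sigma_B$ is well defined, then $\sigma_B\in\szn$. My plan is to bound $|\sigma_B(\m N)\triangle\m N|$ by attaching to each $n\in\m N\cap B$ with $\sigma_B(n)\in\m Z_{<0}$ the last element $m$ of the forward $\sigma$-orbit from $n$ that still lies in $\m N$; such an $m$ belongs to the finite set $F^+=\{k\in\m N:\sigma(k)\in\m Z_{<0}\}$. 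A short computation shows that $\sigma_B(n)$ is determined by $m$ alone, so injectivity of $\sigma_B$ allows at most one $n$ per $m\in F^+$; symmetrically for negative $n$ with $\sigma_B(n)\in\m N$ using the analogous finite set $F^-$. Hence $|\sigma_B(\m N)\triangle\m N|\le|F^+|+|F^-|<\infty$.

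For the convergence $U_{A_n}\to U_A$ when $(A_n)$ is increasing and $U$-invariant with union $A$, I note that $U$-invariance makes $U_{A_n}$ coincide with $U$ on $A_n$ and the identity off $A_n$, and similarly for $U_A$. Fixing $x$ with $\sigma=\Phi(U)(x)$ and setting $B_n=\{m:T^m(x)\in A_n\}\uparrow B=\{m:T^m(x)\in A\}$, the permutation $\Phi(U_{A_n})(x)$ equals $\sigma$ on $B_n$ and the identity elsewhere, so it converges pointwise in $\Sym(\m Z)$ to $\Phi(U_A)(x)$. Moreover the finitely many elements of $B$ on which $\sigma$ crosses the boundary of $\m N$ are absorbed into $B_n$ for $n$ large, which yields $\Phi(U_{A_n})(x)(\m N)=\Phi(U_A)(x)(\m N)$ eventually; Lemma \ref{lem: caracterisation convergence szn} then gives convergence in $\szn$ for a.e.\ $x$, hence in $\cfg T$. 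The step I expect to require the most care is the combinatorial claim on $\sigma_B$: the argument is quick once one uses the ``last crossing'' bookkeeping and the injectivity of $\sigma_B$, but identifying this bookkeeping is the real insight.
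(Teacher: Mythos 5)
Your proof is correct and takes essentially the same approach as the paper's, reducing everything to pointwise statements about permutations in $\szn$. The only differences are minor streamlinings: $\langle T\rangle$ alone suffices for aperiodicity, and your injective ``last crossing'' bookkeeping for the induced-map step is a tighter version of the paper's orbit-by-orbit discussion of crossings, but both rest on the same observation that $\sigma$ has only finitely many sign-changes.
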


\begin{proof}
    Aperiodicity is straightforward: pick a countable dense subset $A\subset \cfg T$. Then the subgroup generated by $A\cup\{T\}$ is countable, and any orbit under $\langle A\cup\{T\}\rangle$ contains the $T$-orbit, which is always infinite.\medskip

    Now onto finite fullness. Let $X=A_1\sqcup\cdots\sqcup A_n$ be a measurable partition of $X$, and $g_1,\ldots,g_n\in\cfg T$ be such that the $g_iA_i$ still partition $X$. As membership to $\cfg T$ is decided ($T$-) orbit by orbit and the $g_i$ preserve those orbits, we may restrict ourselves to one such orbit, which is the same as assuming $X=\m Z$.
    
    In that case, the $g_i$ each send finitely many positive numbers to a negative number, so it stays true for $\sigma$, and the same holds for sending negative numbers to positive numbers.\medskip

    We are now left with induction friendliness. We first prove stability under taking induced maps. Let $S\in\cfg T$, and $A\subset X$ be of positive measure. Note that for a.e. $x\in A$, there is some $n\ge 1$ for which $S^n(x)\in A$ by Poincaré recurrence. We now restrict to the study of the behaviour of $S_A$ on a single orbit of $T$. Denote by $\sigma$ the corresponding permutation of $\m Z$, and $A$ the intersection with this orbit. 
    
    Since $\sigma\in\szn$, there are finitely many orbits of $\sigma$ that have both positive and negative numbers. Among those, the finite orbits only add a finite perturbation, and can be disregarded. Take now an infinite orbit, say $\sigma^\m Z(x)$. Let $B$ be the set of $n\in\m Z$ for which $\sigma^n(x)\sigma^{n+1}(x)\le 0$. It is finite, let $m,M$ be its minimum and maximum respectively. Also, let $m'$ be the biggest element of $A$ smaller than $m$, and $M'$ the smallest element of $A$ bigger than $M$. Then as $\sigma$ is of constant sign outside of $[m,M]$, $\sigma_A$ is of constant sign outside of $[m',M']$. Hence this orbit also causes finite perturbations in the sign. A sum of finitely many finite numbers is finite, so $\sigma_A\in\szn$.\medskip

    Now let $S\in \cfg T$, and $A_1, A_2,\ldots$ be an increasing sequence of $S$-invariant sets. Let $A=\bigcup_i A_i$. We prove that convergence of $S_{A_i}$ to $S_A$ happens pointwise when embedding $\cfg T$ into $\mathrm L^0(X,\mu,\szn)$. This will imply the result as such a convergence is stronger than uniform convergence in measure. 
    
    We thus restrict to an orbit, which we identify with $\m Z$. Let $N\in\m N$, there are only finitely many orbits of $S$ which intersect $\llbracket -N,N\rrbracket$, say $S^\m Z(x_i),\:1\le i\le k$. Since all $A_i$ are unions of orbits, there is an integer $n_0$ such that 
    $$A_n\cap\left(\bigcup_{i=1}^kS^\m Z(x_i)\right)=A\cap\left(\bigcup_{i=1}^kS^\m Z(x_i)\right)$$
    for all $n\ge n_0$. Then $S_{A_n}$ coincides with $S_A$ on $\llbracket -N,N\rrbracket$, which is to say $S_{A_n}\to S_A$ in $\Sym(\m Z)$.
    
    Now let $N$ be big enough for every orbit that contains both positive and negative integers to intersect $\llbracket -N,N\rrbracket$. Note that for any $x$, $S_{A_n}(x)\in S^\m Z(x)$, and the same goes for $S_A$. As such, $S_AS_{A_n}\inv$ is supported away from the aforementioned orbits, and stabilizes $S$-orbits. In particular, it stabilizes $\m N$ for all $n\ge n_0$. By Lemma \ref{lem: caracterisation convergence szn}, $S_{A_n}$ does converge to $S$ in $\szn$, and we are done.
\end{proof}

\section{Basic results on the commensurating full group}\label{sec: Basic results}
\subsection{Structural results}
We start by proving some useful decomposition results for elements of $\cfg T$. For that, we will need the following lemma:

\begin{lem}
    Let $\sigma\in\szn$, and $x\in\m Z$. Then exactly one of the following holds:
    \begin{enumerate}
        \item $x$ is periodic ($\sigma^\m N(x)$ is finite),
        \item $\sigma^n(x)\tendv+\infty$,
        \item $\sigma^n(x)\tendv-\infty$.
    \end{enumerate}
\end{lem}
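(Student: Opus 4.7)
The plan is to dichotomize on whether the forward orbit $\sigma^\m N(x)$ is finite. If it is finite, we are immediately in case (1), so assume from now on that $\sigma^\m N(x)$ is infinite. Since $\sigma$ is a permutation, the map $n\mapsto \sigma^n(x)$ on $\m N$ is then injective: otherwise $\sigma^n(x)=\sigma^m(x)$ for some $n<m$ would force $x$ to be periodic.

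The main tool is the ``sign-change set''
$$B=\bigl\{n\in\m N:\sigma(n)\notin\m N\bigr\}\cup\bigl\{n\in\m Z\setminus\m N:\sigma(n)\in\m N\bigr\},$$
which is exactly $(\m N\setminus\sigma\inv\m N)\sqcup(\sigma\inv\m N\setminus\m N)$. Using that $\sigma\in\szn$ means $\mc L(\sigma)<\infty$ (applied to $\sigma\inv$, which also lies in $\szn$), we get that $B$ is finite. By definition of $B$, for every $n\notin B$, the integers $n$ and $\sigma(n)$ lie on the same side of the partition $\m Z=\m N\sqcup(\m Z\setminus\m N)$.

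Since $B$ is finite and the sequence $(\sigma^n(x))_{n\ge 0}$ is injective, there exists $n_0$ such that $\sigma^n(x)\notin B$ for all $n\ge n_0$. Hence $\sigma^n(x)$ and $\sigma^{n+1}(x)$ are on the same side of $\m N$ for every $n\ge n_0$, so the forward orbit from time $n_0$ onward stays entirely in $\m N$ or entirely in $\m Z\setminus\m N$. In the first case, we have an injective sequence in $\m N$, so every finite set $\llbracket 0,N\rrbracket$ is visited only finitely often; this is precisely the statement $\sigma^n(x)\to+\infty$, giving case (2). The analogous observation in $\m Z\setminus\m N$ gives $\sigma^n(x)\to-\infty$, i.e.\ case (3).

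Finally, the three cases are pairwise incompatible: a periodic orbit is bounded and thus cannot tend to $\pm\infty$, and a sequence cannot simultaneously go to $+\infty$ and $-\infty$. The only real subtlety is the step where one passes from ``the orbit eventually stays in $\m N$'' to ``the orbit tends to $+\infty$''; this is not automatic for arbitrary infinite subsets of $\m N$, but works here because injectivity of the sequence forces each value to be taken at most once.
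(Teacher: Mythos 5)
Your proof is correct, but it takes a somewhat different route from the paper's. The paper argues by contradiction: assuming the orbit is infinite and neither (2) nor (3) holds, it observes that $|x_n|\to+\infty$ by injectivity, extracts an infinite increasing sequence of times $k_n$ with $x_{k_n}>0$ and $\sigma(x_{k_n})<0$, and notes that the resulting infinite subset $\{x_{k_n}\}\subset\m N$ is mapped into $\m Z^-$ by $\sigma$, contradicting $\mc L(\sigma)<\infty$. You instead argue directly: you package the ``bad'' integers into the explicit finite set $B=\sigma\inv(\m N)\triangle\m N$ (finite since $\sigma\inv\in\szn$), use injectivity to show the forward orbit eventually avoids $B$, and then conclude that the tail of the orbit stays on a single side and is injective, hence tends to the appropriate infinity. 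Both arguments hinge on the same two ingredients — injectivity of the infinite forward orbit and finiteness of the sign-change set — but yours is constructive rather than by contradiction and isolates the finite set $B$ cleanly, which arguably makes the key mechanism more transparent; the paper's version is a bit terser and folds the finiteness argument into the final contradiction. Your closing caveat about why ``eventually in $\m N$'' upgrades to ``$\to+\infty$'' is exactly the right thing to flag, and the paper uses the same injectivity fact implicitly when asserting $|x_n|\to+\infty$.
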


\begin{proof}
    These three cases are clearly mutually exclusive. Assume the orbit of $x$ is infinite, and that $(\sigma^n(x))_n$ does not verify $2$ or $3$. Let $x_n=\sigma^n(x)$. Since $x_n$ is injective (otherwise the orbit would be finite), 
    $$|x_n|\tendv +\infty.$$
    If neither $2$ or $3$ hold, we thus have an increasing sequence $k_n$ of integers such that $x_{k_n}>0$ and $\sigma(x_{k_n})=x_{k_n+1}<0$.
    
    But now the set $\{x_{k_n},n\in\m N\}$ is an infinite subset of $\m N$ which is sent to $\m Z^-$ by $\sigma$, contradicting the fact that $\sigma$ commensurates $\m N$.
\end{proof}

We can now transport this result to $\cfg T$ itself.
\begin{lem}
    Let $S\in\cfg T$. For $\mu$-a.e. $x$ in $X$, exactly one of the following holds:
    \begin{enumerate}
        \item $x$ is periodic
        \item $c_{S^n}(x)\tendv+\infty$ (we say $x$ is almost positive)
        \item $c_{S^n}(x)\tendv-\infty$ (we say $x$ is almost negative)
    \end{enumerate}
\end{lem}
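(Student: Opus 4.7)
The plan is to deduce this lemma directly from the preceding lemma by transporting it along the embedding $\Phi:\cfg T\hookrightarrow \mathrm L^0(X,\szn)$. Since $S\in\cfg T$, for $\mu$-a.e.\ $x\in X$ the permutation $\sigma_x:=\Phi(S)(x)$ lies in $\szn$, and the previous lemma then applies to $\sigma_x$ at the point $0\in\m Z$, yielding exactly the trichotomy we want once we translate back.

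The first step is to identify the $\sigma_x$-orbit of $0$ with the sequence of cocycle values $(c_{S^n}(x))_n$. Recalling that $\Phi(S)(x)(k)=k+c_S(T^k(x))$, a short induction on $n\geq 0$ establishes
$$\sigma_x^n(0)=c_{S^n}(x).$$
Indeed, the base case $n=1$ is just the definition, and the induction step uses the cocycle identity
$$c_{S^{n+1}}(x)=c_{S^n}(x)+c_S(S^n(x))=c_{S^n}(x)+c_S\bigl(T^{c_{S^n}(x)}(x)\bigr),$$
which is precisely $\sigma_x(\sigma_x^n(0))$. The negative $n$ case is handled in the same way, or by applying the identity to $S\inv$.

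The second step is to invoke the previous lemma with $\sigma=\sigma_x$: for a.e.\ $x$ exactly one of (i) $0$ is $\sigma_x$-periodic, (ii) $\sigma_x^n(0)\to+\infty$, or (iii) $\sigma_x^n(0)\to-\infty$ holds. Under the identification of the first step, these are exactly the three cases of the statement, namely $x$ being $S$-periodic, $c_{S^n}(x)\to+\infty$, or $c_{S^n}(x)\to-\infty$. Mutual exclusivity is inherited from the previous lemma. The only subtlety is measurability of the partition of $X$ into the three cases, but this is automatic since $x\mapsto c_{S^n}(x)$ is measurable for each $n$ and the three conditions are Borel in the sequence $(c_{S^n}(x))_n$.

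I expect no real obstacle here: the lemma is essentially the pointwise incarnation of the previous one, and the only care needed is bookkeeping the cocycle identity so that $\sigma_x^n(0)=c_{S^n}(x)$.
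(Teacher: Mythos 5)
Your proof is correct and follows the same route as the paper: both deduce the lemma by transporting the previous one along the embedding $\Phi:\cfg T\hookrightarrow \mathrm L^0(X,\szn)$. The paper leaves the identification $\sigma_x^n(0)=c_{S^n}(x)$ implicit, whereas you spell out the cocycle computation, but the argument is identical.
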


\begin{proof}
    Via the embedding $$\Phi:\cfg T\hookrightarrow \mathrm L^0(X,\szn),$$
    we may identify the dynamics of $S$ on $T^\m Z(x)$ with that of $\Phi(S)(x)\in \szn$ on $\m Z$. The result then follows from the previous lemma.
\end{proof}

\begin{dfn}If an element $S\in\cfg T$ is such that any $x\in \mathrm{supp} S$ is almost positive, we say that $S$ is \emph{almost positive}. If every $x\in \mathrm{supp} S$ is almost negative, we say that $S$ is \emph{almost negative}.
\end{dfn}

\begin{dfn}
Let $T\in\Aut(X,\mu)$ be aperiodic. We define a partial order $\le_T$ on $X$ by $x\le_T y$ if $y=T^n(x)$ for some $n\geq 0$.
\end{dfn}

Note that by the previous lemma, if $S(x)\ge_T x$ for almost every $x\in\mathrm{supp}S,$ then $S$ is almost positive.

\begin{pro}\label{pro: decomp perio/+/-}
    Let $T \in \Aut(X, \mu)$ be aperiodic. Every element $S \in \cfg T$ can be written as a product $S = S_pS_+S_-$, where $S_p$ is periodic, $S_+$ is almost positive, $S_-$ is almost negative and these three elements have disjoint supports.
\end{pro}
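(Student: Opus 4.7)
The plan is to use the trichotomy from the previous lemma to partition $\supp S$ into three $S$-invariant measurable pieces, and then to realize $S_p$, $S_+$, $S_-$ as the elements of $\cfg T$ obtained by restricting $S$ to each piece via finite fullness. Explicitly, I would set
\[X_p:=\{x\in X\ :\ S^{\mathbf Z}(x)\text{ is finite}\},\quad X_+:=\{x\in X\ :\ c_{S^n}(x)\tendv+\infty\},\]
\[X_-:=\{x\in X\ :\ c_{S^n}(x)\tendv-\infty\},\]
so that by the preceding lemma these three Borel sets partition a conull subset of $X$, and $X_p\cup X_+\cup X_-$ differs from $\supp S$ only by the periodic fixed points of $S$ (which we harmlessly move into $X_p$, or throw away).

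Next I would check $S$-invariance of each piece. For $X_p$ this is obvious; for $X_+$ and $X_-$ the cocycle identity $c_{S^{n+1}}(x)=c_{S^n}(S(x))+c_S(x)$ gives $c_{S^n}(S(x))=c_{S^{n+1}}(x)-c_S(x)$, so the limit behavior at $x$ and at $S(x)$ coincide. With $X_p,X_+,X_-$ pairwise disjoint, $S$-invariant, and partitioning $\supp S$ up to null sets, I define $S_p$, $S_+$, $S_-$ to agree with $S$ on $X_p$, $X_+$, $X_-$ respectively and to be the identity off of the chosen piece. Each of these is a well-defined element of $\Aut(X,[\mu])$ because it restricts to a measurable bijection on each cell of the partition $X=X_p\sqcup X_+\sqcup X_-\sqcup(X\setminus\supp S)$; finite fullness of $\cfg T$ (proved at the end of the previous section) then gives $S_p,S_+,S_-\in\cfg T$.

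Finally, I would verify the announced properties. By construction $\supp S_p\subset X_p$, $\supp S_+\subset X_+$ and $\supp S_-\subset X_-$, so the three supports are pairwise disjoint; $S_p$ is periodic since every point of $X_p$ has finite $S$-orbit, and $S_+$ (resp.\ $S_-$) is almost positive (resp.\ almost negative) because on its support it coincides with $S$ and the defining property of $X_+$ (resp.\ $X_-$) is exactly the required limit condition on $c_{S^n}$. The product $S_pS_+S_-$ equals $S$ on each of $X_p,X_+,X_-$ (only one factor is nontrivial on each piece) and equals $\id$ on $X\setminus\supp S$, so $S=S_pS_+S_-$.

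The only non-routine point is Borel measurability and $S$-invariance of the three sets, which reduces to the cocycle computation above; everything else is bookkeeping. I do not expect any serious obstacle, and notably no countable fullness or ergodicity hypothesis is needed beyond what has already been established.
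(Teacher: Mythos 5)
Your proof is correct and follows essentially the same route as the paper: partition $X$ into the three $S$-invariant sets given by the trichotomy lemma, restrict $S$ to each piece (which stays in $\cfg T$ by finite fullness / induction on invariant sets), and observe that the product recovers $S$. The cocycle-identity check of $S$-invariance that you spell out is left implicit in the paper, but the argument is the same.
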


\begin{proof}
Let $S\in\cfg T$. The set of $x\in X$ such that the sequence $(c_{S^k}(x))_{k\in\m N}$ is almost positive (resp. almost negative) is $S$-invariant. Call this set $A_+$ (resp. $A_-$). Finally let $A_p$ denote the set of $x\in X$ whose $S$-orbit is finite. Then $(A_-,A_+, A_p)$ is a partition of $X$ by the previous lemma, and since they are $S$-invariant we deduce that $S=S_{A_-}S_{A_+}S_{A_p}$. Clearly $S_p:=S_{A_p}$, $S_+:=S_{A_+}$ and $S_-:=S_{A_-}$ are as wanted.
\end{proof}

\begin{dfn}
    Let $T\in\Aut(X,\mu)$ be aperiodic. If $S\in\cfg T$, we say that $S$ is \emph{positive} if $S(x)\geq_T x$ for all $x\in X$. 
\end{dfn}

\begin{pro}\label{pro: almost positive is positive up to a periodic}Let $T\in\Aut(X,\mu)$ be aperiodic, and let $S\in\cfg T$ be almost positive.  Then there exists a positive element $S'\in\cfg T$ whose support is a subset of $\mathrm{supp} S$ such that $S{S'}\inv$ and ${S'}\inv S$ are periodic. 
\end{pro}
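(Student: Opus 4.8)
\emph{Proof plan.} The plan is to work $T$-orbit by $T$-orbit (through the embedding $\Phi$) and to take for $S'$ a transformation induced by $S$ on a well-chosen set of ``record'' points. The first thing to exploit is that in this statement $T\in\Aut(X,\mu)$ genuinely preserves $\mu$, hence so does every $S\in[T]\supseteq\cfg T$. I would use this to show that, up to a null set, no $S$-orbit inside $\supp S$ is $\le_T$-bounded below. Set $Z:=\{x:c_{S^n}(x)>0\text{ for all }n\neq 0\}$, the set of $\le_T$-minima of those $S$-orbits that possess one. The translates $S^nZ$, $n\in\m Z$, are pairwise disjoint — a point of $S^aZ\cap S^bZ$ with $a\neq b$ would yield two distinct elements of $Z$ lying in a common $S$-orbit, each being its $\le_T$-minimum — so $\sum_{n\in\m Z}\mu(S^nZ)=\sum_{n\in\m Z}\mu(Z)\le\mu(X)=1$ forces $\mu(Z)=0$, and since every point of a $\le_T$-bounded-below $S$-orbit lies in some $S^nZ$, that whole set is null. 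Discarding it and applying the trichotomy lemma to $S\inv$, I obtain that for a.e.\ $x\in\supp S$, besides $c_{S^n}(x)\tendv+\infty$ (the hypothesis) one also has $c_{S^n}(x)\to-\infty$ as $n\to-\infty$; equivalently, the $S$-orbit of a.e.\ point of $\supp S$ is $\le_T$-cofinal in both directions.

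Then I would put $R:=\{x\in\supp S:S^n(x)>_T x\text{ for all }n\ge 1\}=\bigcap_{n\ge 1}\{x:c_{S^n}(x)>0\}$ (the forward records of $S$) and $S':=S_R$, the transformation induced by $S$ on $R$. On an $S$-orbit that is $\le_T$-cofinal in both directions — so on a.e.\ orbit meeting $\supp S$ — the records are themselves $\le_T$-cofinal in both directions (the strict future $\le_T$-minima of the orbit escape to $\pm\infty$ along with the orbit), so $S_R$ is well defined, and it lies in $\cfg T$ because $\cfg T$ is induction friendly. By construction $\supp S'\subseteq R\subseteq\supp S$, and $S'$ is positive: for $x\in R$ one has $S'(x)=S^{n_{R,S}(x)}(x)$ with $n_{R,S}(x)\ge 1$, and $x$ being a record forces $S^{n_{R,S}(x)}(x)>_T x$.

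It then remains to check that $\pi:=S{S'}\inv$ is periodic; the analogous statement for ${S'}\inv S=S\inv\pi S$ is then automatic. Fix an $S$-orbit $\mc O$ that is $\le_T$-cofinal in both directions and enumerate $R\cap\mc O=\{r_i:i\in\m Z\}$ so that $r_{i+1}=S^{m_i}(r_i)$ is the first return of $r_i$ to $R$ — the record property makes this enumeration $\le_T$-increasing. Writing $E_i:=\{S^j(r_i):1\le j\le m_i-1\}$ for the $i$-th excursion, $\mc O$ is the disjoint union of $\{r_i:i\in\m Z\}$ with the sets $E_i$. Since $S'$ sends $r_i$ to $r_{i+1}$ and fixes every point of every $E_i$, one checks directly that $\pi$ permutes each finite set $\{r_{i+1}\}\cup E_i$ as the cycle $(\,r_{i+1}\ \ S(r_i)\ \ S^2(r_i)\ \cdots\ S^{m_i-1}(r_i)\,)$. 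These sets partition $\mc O$, so $\pi|_{\mc O}$ is a disjoint union of finite cycles; as this holds on a.e.\ $S$-orbit and $\pi$ fixes $X\setminus\supp S$, the transformation $S{S'}\inv$ is periodic.

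The step I expect to be genuinely delicate is the choice of $S'$: the naive candidate — relabel each $S$-orbit in $\le_T$-increasing order and let $S'$ be the corresponding ``successor'' map — does not work, since then $S{S'}\inv$ typically still has infinite orbits; one really needs the thinner record set $R$, the whole point being that with it the ``error'' $S{S'}\inv$ splits into finite excursion-cycles. Closely tied to this is the preliminary step: the record-induced map is only meaningful once one knows that there are no $\le_T$-bounded-below orbits, and that is exactly where measure-preservation of $T$ is used. The remaining verifications — measurability of $R$ and of $S_R$, and the reduction orbit-by-orbit through $\Phi$ — are routine.
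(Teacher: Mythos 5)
Your construction is exactly the paper's: the paper takes $A:=\{x\in X: S^k(x)>_Tx\text{ for all }k>0\}$ (your record set $R$), sets $S':=S_A$, deduces positivity from $S_A(x)\geq_T T_A(x)\geq_T x$, and asserts periodicity of $S S_A^{-1}$ and $S_A^{-1}S$ from the fact that $A$ meets every nontrivial $S$-orbit. Your two additional verifications — that a.e.\ almost positive $S$-orbit is $\le_T$-unbounded below (via measure preservation, which is what makes the induced map $S_A$ well defined and the records cofinal backwards) and the explicit excursion-cycle computation showing $S{S'}^{-1}$ is periodic — are correct and merely fill in details the paper leaves implicit.
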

\begin{proof}
Consider the set $A:=\{x\in X: S^k(x)>_Tx\text{ for all }k>0\}$. Note that $T_A(x)$ may be defined as the first element greater than $x$ which belongs to $A$, so by the definition of $A$ we have $T_A(x)\leq_T S_A(x)$ for all $x\in A$. 

Let us show that $A$ intersects every non trivial $S$-orbit. Let $x\in \supp S$. Since $S$ is almost positive, the sequence $(c_{S^k}(x))_{k\in\m N}$ is almost positive. Let $k$ be the last natural integer such that $c_{S^k(x)}\leq 0$, then by definition $S^k(x)\in A$ as wanted.

Now for all $x\in A$ we have $S_A(x)\geq_TT_A(x)\geq_T x$, in particular $S_A$ is positive. Since $A$ intersects every $S$-orbit, we moreover have that $SS_A\inv$ and $S_A\inv S$ are periodic, so $S':=S_A$ is as desired. 
\end{proof}

\begin{lem}
Let $T\in\Aut(X,\mu)$ be aperiodic, let $S\in\cfg T$ be positive and let $A=\mathrm{supp} S$. Then $ST_A\inv$ is positive.
\end{lem}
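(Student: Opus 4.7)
The approach is a direct pointwise verification that $(ST_A\inv)(x)\geq_T x$ for a.e.\ $x\in X$, combining two simple ingredients: the $S$-invariance of $A=\supp S$ (a general property of supports of bijections), and the minimality of the first-return time built into the definition of $T_A$.

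Before starting the case analysis I would quickly sanity-check that $T_A$ is well defined, i.e.\ that every $T$-orbit which meets $A$ meets it infinitely often in both directions. On each $T$-orbit identified with $\m Z$ via $\Phi$, positivity of $S$ translates to $\sigma(n)\geq n$ for the associated $\sigma\in\szn$; any such commensurating $\sigma$ with $\sigma(n)\geq n$ can act non-trivially only on orbits that go from $-\infty$ to $+\infty$ (since $\sigma^{-n}(x)$ is strictly decreasing on $\supp\sigma$), so $\supp\sigma$ is unbounded in both directions of $\m Z$ whenever it is non-empty. This guarantees $T_A\in\Aut(X,[\mu])$.

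Now for the main argument. If $x\notin A$, then $T_A\inv(x)=x$ (since $T_A$ is supported on $A$) and $S(x)=x$ (since $x\notin\supp S$), so $(ST_A\inv)(x)=x\geq_T x$. The interesting case is $x\in A$. Set $y:=T_A\inv(x)$; since $T_A$ restricts to a bijection of $A$, we have $y\in A$ and $T_A(y)=x$, so $y$ is the immediate $T$-predecessor of $x$ among elements of $A$. The $S$-invariance of $A=\supp S$ (if $y\in\supp S$ and $S(y)\notin\supp S$ then $S(S(y))=S(y)$, forcing $S(y)=y$, a contradiction) gives $S(y)\in A$; while $y\in\supp S$ together with positivity of $S$ gives $S(y)>_T y$. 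Now by the defining property of $T_A(y)=x$ as the smallest element of $A$ strictly $>_T y$, every element of $A$ strictly $>_T y$ is automatically $\geq_T x$; applying this to $S(y)$ yields
\[
(ST_A\inv)(x)\;=\;S(y)\;\geq_T\;x,
\]
which is what we wanted.

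I do not anticipate any real obstacle: the content of the lemma is essentially the interaction between $S$-invariance of $\supp S$ and the minimality of return times defining $T_A$, and once those two observations are in place the verification is a one-line check.
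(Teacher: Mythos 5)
Your proof is correct and follows essentially the same route as the paper's: reduce to $x\in A$, use $S$-invariance of $A=\supp S$ together with positivity to get $S(T_A^{-1}(x))\in A$ with $S(T_A^{-1}(x))>_T T_A^{-1}(x)$, and then invoke minimality of the first-return time to conclude $S(T_A^{-1}(x))\geq_T x$. The only addition is your preliminary check that $T_A$ is well defined when $T$ is merely aperiodic, which the paper leaves implicit; this is a sound and worthwhile observation, since positivity of $S$ on each orbit does force $\supp\sigma$ to be unbounded in both directions of $\m Z$.
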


\begin{proof}
Since $S$ is positive, we have $A= \{x\in X: S(x)>_Tx\}$. But then by the definition of $T_A$, for all $x\in A$ we have $S(x)\geq_T T_A(x)$. Since $A$ is $T_A$-invariant, we deduce that for all $x\in A$, $ST_A\inv(x)\geq_T x$. But the support of $ST_A\inv$ is contained in $A$ so the previous inequality actually holds for all $x\in X$, and we conclude that $ST_A\inv$ is positive. 
\end{proof}

\begin{pro}\label{pro: positive is product of induced}
Let $T\in\Aut(X,[\mu])$ be ergodic. Then every positive element is the product of finitely many transformations which are induced by $T$: for all $S\in \cfg T^+$ there exists $k\in\m N$ and $A_1,\ldots,A_k\subseteq X$ such that $$S=T_{A_1}\cdots T_{A_k}.$$
\end{pro}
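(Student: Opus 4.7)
The plan is to induct on $k := I(S)$, first noting that $k \in \m N$ for $S \in \cfg T^+$: on a generic $T$-orbit identified with $\m Z$, the permutation $\sigma := \Phi(S)(x) \in \szn$ satisfies $\sigma(n) \geq n$ for all $n$, whence $\sigma\m N \subseteq \m N$ and $I(\sigma) = |\m N \setminus \sigma\m N| \geq 0$.

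For the base case $k = 0$, I claim $S = \id$, which is the empty product of induced transformations. On each orbit, $\sigma\m N = \m N$ together with $\sigma(n) \geq n$ forces $\sigma|_\m N = \id$: by induction on $n$, if $\sigma$ fixes $\{0,\ldots,n-1\}$ then $n$ must lie in the image of $\sigma$, and the only $m \geq n$ with $\sigma(m) = n$ is $m = n$. Since $\sigma$ preserves $\m N$, it also preserves $\m Z \setminus \m N$, and the analogous downward argument gives $\sigma = \id$ on each orbit.

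For the inductive step $k \geq 1$, set $A := \supp S$, so that $ST_A\inv$ is positive by the preceding lemma. I next verify that $T_A \in \cfg T$ and $I(T_A) = 1$. Since $S$ is non-trivial, ergodicity of $T$ ensures that $A$ meets $\mu$-a.e. $T$-orbit. On such an orbit, a positive $\sigma \in \szn$ has only fixed points and strictly monotone bi-infinite orbits: if $\sigma(n) \neq n$ then $\sigma(n) > n$, so the forward and backward iterates of $n$ strictly increase and decrease respectively, hence tend to $\pm\infty$. In particular, $A$ is unbounded in both directions of the orbit. The induced map $T_A$ on that orbit is therefore the successor map on $B := A \cap \text{orbit}$, whose only non-trivial orbit is the bi-infinite monotone $B$ itself; this permutation lies in $\szn$ with $|\tau\m N \triangle \m N| = 1$ and index $1$. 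By ergodicity, $I(T_A) = 1$.

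Since $I$ is a morphism into $\m Z$, $I(ST_A\inv) = k - 1$, and $ST_A\inv$ is positive, so the inductive hypothesis furnishes $A_1,\ldots,A_{k-1}$ with $ST_A\inv = T_{A_1}\cdots T_{A_{k-1}}$; setting $A_k := A$ then yields $S = T_{A_1}\cdots T_{A_k}$. I anticipate that the main obstacle is the orbit analysis of positive elements of $\szn$ which underlies both the rigidity used in the base case and the verification that $I(T_A) = 1$ in the inductive step; once this structural picture (fixed points together with bi-infinite monotone orbits) is in hand, the rest of the argument is purely formal.
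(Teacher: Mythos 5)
Your proof is correct and follows the paper's argument exactly: induct on the index $I(S)$, set $A = \supp S$, invoke the preceding lemma to see that $ST_A\inv$ is positive, use ergodicity to get $I(T_A)=1$, and thus drop the index by one and conclude. You additionally justify in detail the facts the paper only asserts --- that $I(S)\ge 0$ for positive $S$ with equality only for $S=\id$, and that $T_A\in\cfg T$ with $I(T_A)=1$, both via the orbit picture of positive elements of $\szn$ (fixed points together with bi-infinite strictly monotone orbits) --- which tightens the write-up without changing the route.
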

\begin{proof}
Note that for $S\in\cfg T^+$, $I(S)\ge 0,$ with equality if and only if $S=\id_X$. We prove the result by induction on $I(S)$.

It is certainly true if $I(S)=0$. Assume now that every element $S\in\cfg T^+$ with $I(S)=n$ can be written as a product of transformations induced by $T$, let $S\in\cfg T^+$ with $I(S)=n+1$.

Let $A=\mathrm{supp} S$, by the previous lemma $ST_A\inv$ is positive. Since $T$ is ergodic, $A$ intersects almost every $T$-orbit, so that $I(T_A)=1$. We get $I(ST_A\inv)=n$, so that $ST_A\inv$ can be written as a product of transformations induced by $T$, and so does $S$.
\end{proof}

\begin{pro}\label{pro: cfg eng par perio & T}
    For an ergodic $T\in\Aut(X,[\mu])$, the group $\cfg T$ is generated by the periodic maps and $T$.
\end{pro}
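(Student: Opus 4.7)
The plan is to let $\mc G$ denote the abstract subgroup of $\cfg T$ generated by $T$ together with all periodic elements, and prove $\mc G=\cfg T$. The argument is essentially an assembly of the three decomposition results just established, once we know that every induced transformation $T_A$ lies in $\mc G$.

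The key step is to show $T_A\in\mc G$ whenever $\mu(A)>0$. By ergodicity of $T$ together with Poincaré recurrence applied to both $T$ and $T\inv$, almost every $T$-orbit meets $A$ infinitely often in both directions. Fix such an orbit, identify it with $\m Z$, and enumerate $A$ along it as $\cdots<a_{-1}<a_0<a_1<\cdots$. Since $T_A$ sends each $a_j$ to $a_{j+1}$ and fixes every other point, the element $T\inv T_A$ sends $a_j$ to $a_{j+1}-1$ and then decrements by $1$ at each subsequent step until it returns to $a_j$. Hence each finite interval $\llbracket a_j,a_{j+1}-1\rrbracket$ is a cycle of $T\inv T_A$, which is therefore periodic. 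Consequently $T_A=T\cdot(T\inv T_A)\in\mc G$.

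The conclusion then proceeds by combining the decomposition propositions. Given $S\in\cfg T$, Proposition \ref{pro: decomp perio/+/-} writes $S=S_pS_+S_-$ with $S_p$ periodic, $S_+$ almost positive and $S_-$ almost negative. By Proposition \ref{pro: almost positive is positive up to a periodic}, $S_+=P_+S_+'$ with $P_+$ periodic and $S_+'$ positive, and Proposition \ref{pro: positive is product of induced} then decomposes $S_+'=T_{A_1}\cdots T_{A_k}$; each factor lies in $\mc G$ by the key step, so $S_+\in\mc G$. For $S_-$, note that $S_-\inv$ is almost positive (the permutation action of $S_-\inv$ on each $T$-orbit reverses the direction of the action of $S_-$), so the same argument applied to $S_-\inv$ yields $S_-\inv\in\mc G$ and thus $S_-\in\mc G$.

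The only substantive obstacle is the key step identifying $T\inv T_A$ as periodic; everything else is formal manipulation using the structural results already proved in this subsection.
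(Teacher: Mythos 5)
Your proof is correct and follows essentially the same route as the paper's: decompose $S$ via Proposition \ref{pro: decomp perio/+/-}, reduce almost positive/negative parts to positive ones via Proposition \ref{pro: almost positive is positive up to a periodic} (using that the inverse of an almost negative map is almost positive), reduce positives to products of induced maps via Proposition \ref{pro: positive is product of induced}, and finally observe that $T_A$ differs from $T$ by a periodic map. The only cosmetic difference is that you check that $T\inv T_A$ is periodic while the paper uses $T(T_A)\inv$; both are the same observation.
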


\begin{proof}
    By Proposition \ref{pro: decomp perio/+/-}, $\cfg T$ is generated by periodic maps, together with almost positive and almost negative maps. Since the inverse of an almost negative map is almost positive, $\cfg T$ is generated by the periodic maps and the almost positive maps.
    
    By Propositions \ref{pro: almost positive is positive up to a periodic} and \ref{pro: positive is product of induced}, $\cfg T$ is generated by the periodic maps, along with maps of the form $T_A$ for some Borel subset $A$.
    
    Now let $A$ be a Borel subset of $X$. If it is of zero measure, $T_A=\id$ and we are done. Otherwise, by ergodicity, $A$ meets every $T$ orbit, so that $T(T_A)\inv$ is periodic, and we get the desired result.
\end{proof}

Recall that $[T]_\infty$ denotes the group of elements $S$ of $[T]$ for which their cocycle $c_S$ is essentially bounded.

\begin{lem}\label{lem: perio=lim bdd cocycles}
A periodic element of $\cfg T$ is a limit of elements of $[T]_\infty$.
\end{lem}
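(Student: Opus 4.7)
The plan is to approximate a periodic $S\in\cfg T$ by truncating it to $S$-orbits on which the cocycle is bounded. Since $S$ is periodic, the $S$-orbit of almost every point is finite, so the quantity
$$M(x):=\sup_{k\in\m Z}|c_S(S^k(x))|$$
is finite $\mu$-a.e. Set $A_N:=\{x\in X:M(x)\le N\}$; by construction $A_N$ is $S$-invariant, and $A_N$ increases to a conull set. Define $S_N$ to coincide with $S$ on $A_N$ and with the identity on $A_N^c$. Since $A_N$ is $S$-invariant, $S_N$ is a well-defined element of $[T]$, and by construction $|c_{S_N}|\le N$, so $S_N\in[T]_\infty$.

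It remains to check $S_N\to S$ in $\cfg T$. Using the embedding $\Phi:\cfg T\hookrightarrow \mathrm L^0(X,\szn)$, it is enough to prove pointwise a.e.\ convergence $\Phi(S_N)(x)\to \Phi(S)(x)$ in $\szn$, since this implies convergence in measure. Fix $x$ in the conull set on which $T^k(x)$ has finite $S$-orbit for every $k\in\m Z$, and write $\sigma:=\Phi(S)(x)$, $\sigma_N:=\Phi(S_N)(x)$. Because $A_N$ is $S$-invariant, $\sigma_N$ is obtained from $\sigma$ by replacing $\sigma$ with the identity on every $\sigma$-cycle not entirely contained (as a subset of $\m Z\cong T^\m Z(x)$) in $A_N$; equivalently, $\sigma_N\sigma^{-1}$ is the identity on cycles inside $A_N$ and equals $\sigma^{-1}$ on cycles outside.

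Convergence $\sigma_N\to\sigma$ in $\szn$ then follows from Lemma \ref{lem: caracterisation convergence szn} applied to $\sigma_N\sigma^{-1}\to 1$. For pointwise convergence in $\Sym(\m Z)$: for each $k\in\m Z$, the point $T^k(x)$ lies in some $A_{N_0}$, so $\sigma_N(k)=\sigma(k)$ for all $N\ge N_0$. For eventual preservation of $\m N$: since $\sigma\in\szn$, only finitely many $\sigma$-orbits cross $\m N$, and each such orbit is finite; hence there exists $N_1(x)$ such that for $N\ge N_1(x)$ all these crossing orbits are contained in $A_N$, so $\sigma_N\sigma^{-1}$ acts trivially on them. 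The remaining cycles on which $\sigma_N\sigma^{-1}$ is nontrivial lie entirely in $\m N$ or entirely in $\m Z\setminus\m N$, and therefore $\sigma_N\sigma^{-1}$ stabilises $\m N$.

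The only real subtlety is verifying the eventual stabilisation of $\m N$; this is where the hypothesis $\sigma\in\szn$, which restricts the number of $\sigma$-orbits straddling the boundary $\m N\mid\m Z\setminus\m N$ to be finite, is used essentially. Everything else is a routine verification using that periodic elements have finite $S$-orbits almost everywhere and that $A_N$ is $S$-invariant.
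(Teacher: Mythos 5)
Your construction is the same as the paper's: set $A_N$ to be the (increasing, $S$-invariant) set of points whose $S$-orbit has $T$-diameter bounded by $N$, and approximate $S$ by the induced maps $S_{A_N}\in[T]_\infty$. The paper simply invokes the previously established \emph{induction friendliness} of $\cfg T$ to conclude $S_{A_N}\to S$, whereas you re-derive that convergence from scratch via pointwise a.e.\ convergence in $\mathrm L^0(X,\szn)$ together with Lemma~\ref{lem: caracterisation convergence szn}. Your argument is correct (and is essentially the same as the one the paper used to prove induction friendliness), so the only real difference is that you have inlined a step the paper delegates to an earlier proposition; citing induction friendliness would shorten your proof to one line.
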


\begin{proof}
    Let $U\in\cfg T$ be periodic, and for $n\in \m N$ let $A_n:=\{x\in \supp U,\:\forall k\in\m Z,|c_{U^k}(U^l(x))|<n\}$ be the set of points whose $U$-orbits have diameter less than $n$ in the $T$-distance. Then $A_n$ is $U$-invariant and $\supp U=\bigcup_n A_n$, and $U_{A_n}\in[T]_\infty$. By induction friendliness, $U=\lim_n U_{A_n}$, and we are done.
\end{proof}

\begin{cor}
    Let $T\in\Aut(X,[\mu])$ be ergodic. Then $[T]_\infty$ is dense in $\cfg T$. In particular, if $T$ preserves $\mu$, $[T]_1$ is dense in $\cfg T$.
\end{cor}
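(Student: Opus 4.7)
The plan is to combine the two preceding results and a standard observation on bounded cocycles. By Proposition \ref{pro: cfg eng par perio & T}, the group $\cfg T$ is generated by $T$ and the periodic elements of $\cfg T$. The map $T$ itself lies trivially in $[T]_\infty$ (its cocycle is the constant function $1$), and by Lemma \ref{lem: perio=lim bdd cocycles} every periodic element of $\cfg T$ is a limit of elements of $[T]_\infty$. So every element of a generating set of $\cfg T$ lies in the closure $\overline{[T]_\infty}$.

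To promote this to the whole group, I would observe that $[T]_\infty$ is a subgroup of $\cfg T$: if $S_1,S_2\in [T]_\infty$, the standard cocycle identity $c_{S_1S_2}(x)=c_{S_1}(S_2(x))+c_{S_2}(x)$ and $c_{S_1\inv}(x)=-c_{S_1}(S_1\inv(x))$ immediately show that products and inverses have essentially bounded cocycles. Consequently $\overline{[T]_\infty}$ is a closed subgroup of $\cfg T$ containing the generating set given above, so $\overline{[T]_\infty}=\cfg T$, which is the density statement.

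For the second assertion, if $T$ preserves the probability measure $\mu$, then any essentially bounded cocycle is automatically integrable, so $[T]_\infty\subseteq [T]_1\subseteq \cfg T$, and taking closures gives that $[T]_1$ is also dense in $\cfg T$.

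The only mildly delicate point is checking that $\overline{[T]_\infty}$ is closed under the group operations, but since inversion and multiplication are continuous in the Polish group $\cfg T$ and $[T]_\infty$ is itself a subgroup, the closure is automatically a subgroup; there is no real obstacle.
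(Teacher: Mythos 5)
Your proof is correct and follows exactly the paper's route: density of $[T]_\infty$ comes from Lemma~\ref{lem: perio=lim bdd cocycles} and Proposition~\ref{pro: cfg eng par perio & T} by observing that $\overline{[T]_\infty}$ is a closed subgroup containing a generating set, and the $[T]_1$ statement follows since bounded cocycles on a probability space are integrable. You spell out a couple of steps (the cocycle identity, the fact that the closure of a subgroup is a subgroup) that the paper leaves implicit, but the argument is the same.
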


\begin{proof}
    Let $G=\overline{[T]_\infty}$ be the closure of $[T]_\infty$ in $\cfg T$. We know by Lemma \ref{lem: perio=lim bdd cocycles} that any periodic $U\in\cfg T$ is in $G$, and we certainly have $T\in[T]_\infty$. Hence by the previous proposition, $G=\cfg T$.
\end{proof}

The following lemma indicates that the pseudo full groups are "as transitive" as they can be when acting on subsets of $X$, that is, the only possible constraint is that of an invariant measure.

\begin{lem}
    Let $T\in \Aut(X,[\mu])$, and let $\nu\sim \mu$ be a $T$-invariant measure. If $A,B$ are such that $\nu(A)=\nu(B)$, there is $\phi\in[[T]]$ such that $\mathrm{dom}(\phi)=A,\mathrm{rng}(\phi)=B$.
    
    If no such invariant measure exists, then for any $A,B$ of positive measure, there is $\phi\in[[T]]$ such that $\mathrm{dom}(\phi)=A,\mathrm{rng}(\phi)=B$.
\end{lem}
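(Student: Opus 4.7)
The statement is a version of Hopf's equivalence theorem for the orbit equivalence relation $\mc R_T$ of $T$. I assume $T$ ergodic (the main case of interest) and give a back-and-forth exhaustion argument.

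\emph{Construction.} Enumerate $\m Z = \{n_1, n_2, \ldots\}$ and build inductively an increasing chain $(\phi_k)_{k \ge 0}$ of partial injections in $[[T]]$ with $\mathrm{dom}(\phi_k) \subseteq A$ and $\mathrm{rng}(\phi_k) \subseteq B$, by taking $\phi_0 := \emptyset$ and
\[
C_k := \{x \in A \setminus \mathrm{dom}(\phi_{k-1}) : T^{n_k}(x) \in B \setminus \mathrm{rng}(\phi_{k-1})\}, \qquad \phi_k := \phi_{k-1} \sqcup T^{n_k}|_{C_k}.
\]
Let $\phi := \bigcup_k \phi_k \in [[T]]$ (by closure under countable cutting-and-pasting), and set $A' := A \setminus \mathrm{dom}(\phi)$, $B' := B \setminus \mathrm{rng}(\phi)$. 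If some $x \in A'$ had $T^n(x) \in B'$ for some $n \in \m Z$, the pair would have been added at step $k$ with $n_k = n$; hence $T^{\m Z}(A') \cap B' = \emptyset$. Since $T^{\m Z}(A')$ is $T$-invariant, ergodicity of $T$ forces one of $A', B'$ to be $\mu$-null.

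\emph{Invariant measure case.} If $\nu \sim \mu$ is $T$-invariant with $\nu(A) = \nu(B)$, each restriction $T^{n_k}|_{C_k}$ preserves $\nu$, so $\phi$ does as well and $\nu(A') = \nu(B')$. Together with the dichotomy above, both $A'$ and $B'$ are null, and $\phi$ is the required element.

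\emph{Type III case.} When no such $\nu$ exists, the dichotomy still leaves one of $A', B'$ null; say $\mu(B') = 0$, so only the subcase $\mu(A') > 0$ remains. Here I invoke the standard characterization of type III: when $T$ admits no equivalent $\sigma$-finite invariant measure, every Borel set $E$ of positive measure is $[[T]]$-equivalent to a proper positive-measure subset of itself. Applied to $B$, this yields $\xi \in [[T]]$ with $\mathrm{dom}(\xi) = B$ and $\mathrm{rng}(\xi) = B_1 \subsetneq B$, $\mu(B \setminus B_1) > 0$. Replacing $\phi$ by $\xi \circ \phi$ frees $B \setminus B_1$ to absorb part of $A'$; iterating the exhaustion on $(A', B \setminus B_1)$ and closing a Schr\"oder--Bernstein style argument (using countable cutting-and-pasting once again) yields the desired $\phi$ with $\mathrm{dom}(\phi) = A$ and $\mathrm{rng}(\phi) = B$. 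The main difficulty lies precisely here: establishing the type III compressibility characterization and closing the iteration, whereas the rest of the argument is a routine Hopf-style exhaustion.
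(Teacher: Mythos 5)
The paper's own ``proof'' of this lemma is a citation to Hajian--Ito--Kakutani \cite{HIK74}, so you are necessarily on your own; supplying a genuine exhaustion argument is a reasonable route, and the Hopf-style back-and-forth you set up is the right idea. As written, though, it has two real gaps.

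First, the invariant-measure branch only closes when $\nu(A)$ is finite. The $T$-invariant measure $\nu\sim\mu$ is allowed to be merely $\sigma$-finite (and the type $\mathrm{II}_\infty$ case does occur in the paper), and when $\nu(A)=\nu(B)=\infty$ the identity $\nu(\mathrm{dom}(\phi))=\nu(\mathrm{rng}(\phi))$ no longer lets you deduce $\nu(A')=\nu(B')$. The exhaustion can genuinely strand a positive-measure remainder on one side only: take $\m Z$ with counting measure and the shift, $A=\m N$, $B=2\m N$, and an enumeration of $\m Z$ beginning with $0$; the very first step pairs all of $B$ with $2\m N\subset A$ and leaves the odd naturals as $A'$ while $B'=\emptyset$. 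To repair this you should reduce to the finite case by decomposing $A$ and $B$ into countably many pieces of equal finite $\nu$-measure and matching them piecewise, or run two exhaustions (one in each direction) and close with a measurable Schr\"oder--Bernstein argument.

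Second, the type III branch is only a sketch. The compressibility fact you invoke (every non-null Borel set can be moved by an element of $[[T]]$ onto a proper non-null subset of itself precisely when there is no equivalent $\sigma$-finite invariant measure) is essentially the content of the Hopf/Hajian--Kakutani dichotomy and is not easier than the lemma; and ``closing a Schr\"oder--Bernstein style argument'' must actually be carried out, since the iteration you describe only produces an injection, not a bijection with the prescribed range. You flag this honestly, but as it stands the type III case is not proved. Finally, ergodicity of $T$ is used essentially (to make the saturation $T^{\m Z}(A')$ conull) but is not among the stated hypotheses of the lemma; it should be, and it is present in the results of \cite{HIK74} that the paper cites.
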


\begin{proof}
    The first part is \cite[Corollary of Lemma 5]{HIK74}, while the second part is \cite[Lemma 6]{HIK74}.
\end{proof}

\begin{lem}\label{lem: invo qui échange A et B}
    Let $T\in \Aut(X,[\mu])$, and let $\nu\sim \mu$ be a $T$-invariant measure. If $A,B$ are disjoint such that $\nu(A)=\nu(B)$, there is an involution $U\in[T]$ with $\supp U=A\sqcup B$ and $U(A)=B$.
    
    If no such invariant measure exists, then for any $A,B$ disjoint and of positive measure, there is an involution $U\in[T]$ with $\supp U=A\sqcup B$ and $U(A)=B$.
\end{lem}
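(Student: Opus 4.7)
The plan is to bootstrap the previous lemma: it already produces a partial isomorphism $\phi\in[[T]]$ with $\mathrm{dom}(\phi)=A$ and $\mathrm{rng}(\phi)=B$ in both cases (invariant measure, or no invariant measure at all). The only remaining task is to symmetrize $\phi$ into an honest involution of $X$.

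First I would apply the previous lemma in the relevant case (depending on whether a $T$-invariant $\nu\sim\mu$ exists, using $\nu(A)=\nu(B)$ in the first case or merely positivity in the second) to obtain a partial map $\phi\in[[T]]$ with $\mathrm{dom}(\phi)=A$ and $\mathrm{rng}(\phi)=B$. Then I would define $U:X\to X$ by
\[
U(x)=\begin{cases} \phi(x) & \text{if }x\in A,\\ \phi\inv(x) & \text{if }x\in B,\\ x & \text{otherwise.}\end{cases}
\]
Since $A\cap B=\emptyset$ and $\phi$ is a bimeasurable bijection $A\to B$ (hence $\phi\inv$ is one from $B$ to $A$), the three pieces fit together into a well-defined bimeasurable bijection of $X$. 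By construction $U^2=\mathrm{id}$ on each of the three pieces, so $U$ is an involution, its support is exactly $A\sqcup B$ (as $A$ and $B$ are disjoint, no point of $A\cup B$ is fixed), and $U(A)=B$.

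It remains to check that $U\in[T]$. For a.e.\ $x\in A$ we have $U(x)=\phi(x)\in T^{\m Z}(x)$ because $\phi\in[[T]]$; for a.e.\ $x\in B$ we have $U(x)=\phi\inv(x)\in T^{\m Z}(x)$ for the same reason (the pseudo-full group is a groupoid, so $\phi\inv\in[[T]]$); and on $X\setminus(A\sqcup B)$ the map $U$ is the identity. Thus $U(x)\in T^{\m Z}(x)$ for a.e.\ $x\in X$, i.e.\ $U\in[T]$. There is no real obstacle here: the work was already done in the previous lemma, and the passage from partial map to involution is just the standard trick of gluing $\phi$ and $\phi\inv$ on disjoint domains.
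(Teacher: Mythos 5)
Your proof is correct and follows exactly the same route as the paper: apply the preceding lemma to produce $\phi\in[[T]]$ with $\mathrm{dom}(\phi)=A$, $\mathrm{rng}(\phi)=B$, then glue $\phi$, $\phi\inv$, and the identity to obtain the involution $U$. The paper states this more tersely, but the argument and the verification that $U\in[T]$ are the same.
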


\begin{proof}
    For both cases, we may apply the previous lemma to get $\phi\in [[T]]$ such that $\mathrm {dom}\:\phi=A,\mathrm{rng}\:\phi=B$. We then define $U$ by 
    $$U(x)=
    \begin{cases}
        \phi(x) \text{ if }x\in A,\\
        \phi\inv(x) \text{ if }x\in B,\\
        x \text{ otherwise.}
    \end{cases}$$
    It is easily seen that $U$ is as desired.
\end{proof}

\begin{lem}\label{lem: invo de support donné}
    Let $C$ be a Borel subset of $X$. Then there is an involution $U\in[T]$ such that $\supp U=C$.
\end{lem}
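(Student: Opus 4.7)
The plan is to apply Lemma~\ref{lem: invo qui échange A et B} by writing $C$ as a disjoint union $C = A \sqcup B$ in such a way that the lemma produces an involution swapping $A$ and $B$, whose support is then precisely $C$. First, I would dispose of the trivial case $\mu(C) = 0$ by taking $U = \id$, so I may assume $\mu(C) > 0$.

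I would then split according to the dichotomy appearing in Lemma~\ref{lem: invo qui échange A et B}. If no $T$-invariant $\sigma$-finite measure equivalent to $\mu$ exists, it suffices to partition $C = A \sqcup B$ into two sets of positive $\mu$-measure (possible by atomlessness of $\mu$), and the second part of the lemma provides the involution directly. Otherwise, fix such an invariant $\nu \sim \mu$; in this case I need to arrange $\nu(A) = \nu(B)$. When $\nu(C) < \infty$, atomlessness of $\nu$ (inherited from $\mu$ via equivalence) yields an equal-$\nu$-measure splitting of $C$. When $\nu(C) = \infty$, I would use $\sigma$-finiteness of $\nu$ to decompose $C = \bigsqcup_n C_n$ with each $\nu(C_n) < \infty$, split each $C_n$ in half by $\nu$-measure into $A_n \sqcup B_n$, and set $A := \bigsqcup_n A_n$, $B := \bigsqcup_n B_n$, both of infinite $\nu$-measure. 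The first part of Lemma~\ref{lem: invo qui échange A et B} then supplies the desired involution.

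The construction is essentially a bookkeeping exercise once the dichotomy of the previous lemma is isolated; the only place where a moment's care is needed is the infinite-$\nu$-measure subcase, which is handled cleanly by $\sigma$-finiteness. No step strikes me as a genuine obstacle.
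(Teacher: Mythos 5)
Your proof is correct and follows essentially the same route as the paper: split $C=A\sqcup B$ (into positive-measure halves, or equal-$\nu$-measure halves when an invariant $\nu$ exists) and apply Lemma~\ref{lem: invo qui échange A et B}. The only difference is that you spell out the $\nu(C)=\infty$ subcase via $\sigma$-finiteness, whereas the paper dispatches it with "$\nu(A)=\nu(B)=\nu(C)/2$ since $X$ is standard"; your extra care is harmless and arguably slightly cleaner.
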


\begin{proof}
    If $T$ admits an invariant measure $\nu$, it is always possible to split $C$ into $C=A\sqcup B$ with $\nu(A)=\nu(B)=\nu(C)/2$ given that $X$ is standard.
    
    Otherwise, just split $C=A\sqcup B$ with $A,B$ of positive measure, which is again possible since $X$ is standard. 
    
    In either case, applying the previous lemma to $A$ and $B$ yields the desired result.
\end{proof}

\begin{lem}\label{lem: invos approximées par cocycle borné}
    Let $U\in [T]$ be an involution. Then there is an increasing family of $U$-invariant subsets $A_n$ of $\supp U$ such that $\supp U=\bigcup_n A_n$ and $U_{A_n}\in [T]_\infty$ for all $n$.
\end{lem}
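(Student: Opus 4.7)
The plan is to use the natural sublevel sets of the cocycle $c_U$ with respect to $T$. Since $T$ is aperiodic in all contexts where $[T]$ is discussed here, the cocycle $c_U:\supp U\to\m Z$ is well-defined almost everywhere, and I will define
$$A_n:=\{x\in\supp U:|c_U(x)|\le n\}.$$
These are plainly increasing with $n$, and since $c_U$ takes only finite integer values, $\bigcup_n A_n=\supp U$ up to a null set.

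The key point is that each $A_n$ is $U$-invariant. This uses the involution hypothesis: for $x\in\supp U$ one has $U(U(x))=x$, but also
$$x=U(U(x))=T^{c_U(U(x))}(U(x))=T^{c_U(U(x))+c_U(x)}(x),$$
and aperiodicity of $T$ forces $c_U(U(x))=-c_U(x)$. Consequently $|c_U(U(x))|=|c_U(x)|$, so $x\in A_n\iff U(x)\in A_n$, proving $U$-invariance.

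Because $A_n$ is $U$-invariant and $U$ is an involution on $\supp U$, the return time of every $x\in A_n$ to $A_n$ under $U$ is exactly $1$. Hence $U_{A_n}$ coincides with $U$ on $A_n$ and is the identity outside, and its $T$-cocycle is $c_U\cdot\mathbf 1_{A_n}$, which is essentially bounded by $n$. This gives $U_{A_n}\in[T]_\infty$ as required. The argument is essentially immediate once the involution-plus-aperiodicity identity $c_U\circ U=-c_U$ is in hand; no further obstacle is expected.
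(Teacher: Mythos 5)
Your proof is correct and takes essentially the same approach as the paper: defining $A_n$ as sublevel sets of $|c_U|$, using the involution identity $c_U\circ U=-c_U$ (which the paper leaves implicit) to get $U$-invariance, and then observing that $U_{A_n}$ has bounded cocycle.
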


\begin{proof}
    Let $A_n=\{x\in\supp U,|c_U(x)|<n\}$. We certainly have $\bigcup_nA_n=\supp U$, and clearly $U_{A_n}\in[T]_\infty$. Since $U$ is an involution, $A_n$ is $U$-invariant, so that $U_{A_n}$ is again an involution. 
\end{proof}

\begin{lem}
    Any involution in $\cfg T$ is in the closure of the derived subgroup of $\cfg T$.
\end{lem}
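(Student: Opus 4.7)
The plan is to reduce to bounded-cocycle involutions via Lemma \ref{lem: invos approximées par cocycle borné} and then exhibit such involutions as limits of products of commutators using the $S_4$ identity $(AB)(CD)=(ABC)(BCD)$ that rewrites a product of two disjoint transpositions as a product of two $3$-cycles.

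By Lemma \ref{lem: invos approximées par cocycle borné}, any involution $U\in\cfg T$ is the limit of $U_{A_n}\in[T]_\infty$, which are themselves involutions, as $A_n\nearrow\supp U$ runs through an increasing sequence of $U$-invariant sets. Since $\overline{D(\cfg T)}$ is closed in $\cfg T$, it suffices to prove the statement when $U$ itself has bounded cocycle, in which case we write $\supp U=A\sqcup B$ with $U(A)=B$.

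For such $U$, the idea is to approximate it by elements of the form $UW$, where $W$ is a companion involution in $\cfg T$ whose support is disjoint from $\supp U$ and of the form ``swap $C$ with $D$'' for some Borel sets $C,D$ chosen so that $A,B,C,D$ are pairwise equivalent in the pseudo full group $[[T]]$ (which is granted by Lemma \ref{lem: invo qui échange A et B}, handling the type II and type III cases separately) and so that the various elementary swap involutions between these four sets have bounded cocycle (hence lie in $\cfg T$). The combined involution $UW$ then acts as $(AB)(CD)$ on the four-set partition; by the identity $(AB)(CD)=(ABC)(BCD)$ it decomposes as a product of two $3$-cycles among $\{A,B,C,D\}$. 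Each such $3$-cycle on three pairwise equivalent Borel sets is of the form $[(XY),(YZ)]^{-1}$, the inverse of a commutator of swap-involutions in $\cfg T$ (themselves provided by Lemmas \ref{lem: invo qui échange A et B} and \ref{lem: invo de support donné}). Hence $UW\in D(\cfg T)$, and then $U=UW\cdot W$ lies within $d(W,1)$ of $D(\cfg T)$; letting $W$ tend to $1$ in $\cfg T$ puts $U$ in $\overline{D(\cfg T)}$.

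The main obstacle is to reconcile the convergence $W\to 1$ in $\cfg T$ (which requires $\mu(\supp W)\to 0$) with the measure-matching requirement $\mu(C)=\mu(A)$ coming from Lemma \ref{lem: invo qui échange A et B} in the type II case. In the type III case no such measure constraint is imposed, and one may directly take $C,D$ of arbitrarily small measure. In the type II case, one first decomposes $U$ using finite fullness of $\cfg T$ into a product $U=U^{(1)}\cdots U^{(k)}$ of involutions supported on pieces of small measure, applies the above construction to each $U^{(i)}$ with its own companion $W^{(i)}$ of matching small measure, and combines the resulting commutator expressions; the delicate point, which is the bulk of the work, is to arrange that the combined companion $\prod_i W^{(i)}$ still tends to the identity in $\cfg T$ by judiciously allocating auxiliary Borel sets in the complement of $\supp U$ using ergodicity of $T$.
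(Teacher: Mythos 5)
Your core algebraic idea (express a product of two disjoint transpositions as a product of two $3$-cycles, each of which is a commutator of swap-involutions) is fine, but the step you flag as ``the delicate point'' is a genuine gap, not a technicality. In the type $\mathrm{II}$ case, each companion $W^{(i)}$ must swap auxiliary sets $C^{(i)},D^{(i)}$ with $\nu(C^{(i)})=\nu(D^{(i)})=\nu(A^{(i)})$, so $\nu(\supp W^{(i)})=\nu(\supp U^{(i)})$. If the $W^{(i)}$ are pairwise disjoint, the combined companion $\prod_i W^{(i)}$ has $\nu$-mass exactly $\nu(\supp U)$ no matter how finely you cut $U$, so it cannot tend to the identity; cutting into small pieces only redistributes mass, it doesn't shrink it. If instead you reuse a single companion $W$ across all pieces, you get $UW^k$ rather than $UW$, which salvages things only via the parity observation $W^k=\id$ for $k$ even --- an insight your write-up doesn't invoke, and which the phrasing ``tends to the identity'' suggests you weren't aiming at. Moreover, even the parity fix collapses when $\supp U$ is conull in the probability-preserving case: there is then no room outside $\supp U$ to place $C$ and $D$ of positive measure, and you would need a further reduction (e.g.\ splitting $U=U_1U_2$ and proving each factor lies in $D(\cfg T)$) that the proposal doesn't mention.

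The paper's proof sidesteps all of this by never leaving $\supp S$. Writing $\supp S=B\sqcup S(B)$ and choosing an involution $U$ with $\supp U=B$ (Lemma~\ref{lem: invo de support donné}), together with $C$ such that $B=C\sqcup U(C)$, gives a four-fold decomposition $C\sqcup U(C)\sqcup S(C)\sqcup SU(C)$ of $\supp S$ that plays the role your four sets $A,B,C,D$ were meant to play --- but internally, with no measure-matching constraint and no conull obstruction. Truncating via Lemma~\ref{lem: invos approximées par cocycle borné} to $B_n$, one sets $S_n:=S_{C_n\sqcup S(C_n)}$ and builds $U_n$ by $U$ on $B_n$, $SUS$ on $S(B_n)$; a direct computation shows $[S_n,U_n]=S_{A_n}$ with $A_n=B_n\sqcup S(B_n)$, and induction friendliness gives $S_{A_n}\to S$. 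This is both shorter and strictly more robust than the companion-involution route, so you should either adopt it or, if you want to keep your approach, explicitly prove the reused-companion parity argument and treat the conull-support case separately.
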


\begin{proof}
    Let $S\in \cfg T$ be an involution, and let $B\subset X$ be such that $\supp S=B\sqcup S(B)$. By Lemma \ref{lem: invo de support donné} there is an involution $U\in[T]$ whose support is $B$. By Lemma \ref{lem: invos approximées par cocycle borné}, we can find an increasing family $B_n$ of $U$-invariant subsets of $\supp U$ such that $B=\bigcup_nB_n$, and for all $n$, $U_{B_n}\in[T]_\infty\subset \cfg T$.
    
    Now let $C$ be such that $B=C\sqcup U(C)$ $C_n:=B_n\cap C$, and let $S_n=S_{(C_n\sqcup S(C_n))}$.
    Now define $U_n$ by
    $$U_n(x)=
    \begin{cases}
        U(x)\text{ if } x\in B_n,\\
        SUS(x)\text{ if } x\in S(B_n),\\
        x\text{ otherwise.}
    \end{cases}
    $$
    By construction, the commutator $[S_n, U_n]$ is the transformation induced by $S$ on the $S$-invariant set $A_n := B_n \sqcup S(B_n)$, and we have $\bigcup_n A_n=\supp S$, we may conclude that $[S_n,U_n]\to S$ by induction friendliness.
\end{proof}

\begin{lem}
    Every periodic element of $\cfg T$ is a product of two involutions of $\cfg T$.
\end{lem}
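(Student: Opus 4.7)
The plan is to construct $V$ as the \emph{cycle-reversing} involution associated to $U$ and then set $W := VU$. A standard computation in each $U$-orbit shows that if $V$ reverses every $U$-cycle then $VUV = U\inv$, hence $W^2 = VUVU = U\inv U = 1$, so $W$ is also an involution and $U = VW$ is the desired factorization.

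To define $V$ measurably, I would first partition $X$ into the $U$-invariant Borel sets $A_n := \{x \in X : |U^{\m Z}(x)| = n\}$ for $n \ge 1$, and set $V := \id$ on $A_1$. For each $n \ge 2$, using a Borel linear ordering on $X$ (for instance, one coming from the identification $(X,\mu)\simeq [0,1]$ with the Lebesgue measure), I can select a Borel transversal $B_n\subset A_n$ for the $U$-action, giving $A_n = \bigsqcup_{k=0}^{n-1} U^k(B_n)$; then put $V(U^k x) := U^{n-1-k}(x)$ for $x\in B_n$ and $0\le k\le n-1$. The resulting $V$ is a Borel involution, clearly in $[T]$, and the identity $VUV = U\inv$ is checked orbit by orbit as a routine symmetric-group computation.

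The main obstacle is to verify that $V$ actually belongs to $\cfg T$ (once this is done, $W = VU\in\cfg T$ automatically since $\cfg T$ is a group). I would proceed $T$-orbit by $T$-orbit: identifying a $T$-orbit with $\m Z$, denote by $\sigma \in \szn$ the permutation induced by $U$ and by $v$ the one induced by $V$. By construction $v$ reverses each cycle of $\sigma$, and I must show $|v(\m N)\triangle \m N| < \infty$. Cycles of $\sigma$ contained entirely in $\m N$ or entirely in $\m Z\setminus\m N$ contribute nothing to $v(\m N)\triangle\m N$, since $v$ then permutes them within one side of $0$. The remaining \emph{crossing} cycles of $\sigma$, those meeting both $\m N$ and its complement, are only finitely many: each must enter and leave $\m N$ at least once as one traverses it, so it contributes at least two elements to $\sigma(\m N)\triangle\m N$, which is finite since $\sigma\in\szn$. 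Each such crossing cycle is itself a finite set (as $\sigma$ is periodic), so in total $v$ produces only finitely many new crossings across $0$, giving $v\in\szn$ as required. Measurability of $x\mapsto\Phi(V)(x)$ as a map into $\szn$ follows from its measurability into $\Sym(\m Z)$ since the Borel structure on $\szn$ is generated by the pointwise evaluations on the countable set $\mathrm{Comm}(\m N)$, and this concludes the argument.
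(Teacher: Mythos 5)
Your proof is correct, and it takes a genuinely different route from the paper's. The paper first treats an $n$-cycle by writing it as a product of adjacent transpositions $(1\ 2)(2\ 3)\cdots(n-1\ n)$ and regrouping those by parity into two involutions, then handles a general periodic $U$ by decomposing $\supp U$ into the sets of fixed orbit length, factoring each piece separately, and passing to a limit $V=\lim_n V_1\cdots V_n$; the membership of this limit in $\cfg T$ is then argued in a final paragraph. You instead use the dihedral picture: the cycle-reversal involution $V$ (defined globally in one measurable stroke via transversals $B_n$) satisfies $VUV=U\inv$, which makes $W:=VU$ automatically an involution with $U=VW$, and you then verify $V\in\cfg T$ orbit by orbit. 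Your verification — non-crossing $\sigma$-cycles are $v$-stable on each side of $0$, while the crossing cycles form a single finite set that absorbs all of $v(\m N)\triangle\m N$ — is both more explicit and more self-contained than the paper's terse closing remark, and it avoids any limiting argument entirely. The one small cost is invoking a Borel transversal for each orbit length to make $V$ measurable, but that is standard, and you correctly address the measurability of $\Phi(V)$ into $\szn$ (which can also be seen more quickly from Lusin--Souslin, since the continuous inclusion $\szn\hookrightarrow\Sym(\m Z)$ is a Borel isomorphism onto its image). Both proofs are valid; yours is arguably cleaner precisely at the two points where care is needed — the regrouping into two involutions and the membership in $\cfg T$.
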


\begin{proof}
    We first start by proving it when $U$ is an $n$-cycle. In that case, let $A\subset X$ be such that 
    $$\supp U=\bigsqcup_{i=1}^n U^i(A).$$ 
    For $1\le i\le n-1$ we define $U_i\in\cfg T$ to be the involution defined by
    $$U_i(x)=
    \begin{cases}
        U(x)\text{ if }x\in U^i(A),\\
        U\inv(x)\text{ if }x\in U^{i+1}(A),\\
        x\text{ otherwise.}
    \end{cases}$$
    From the identity
    $$(1\quad 2\quad \cdots \quad n)=(1\quad 2)(2\quad 3)\cdots (n-1\quad n)$$
    in $\mathfrak S_n,$ we see that $U=U_1U_2\cdots U_{n-1}$, and also that if $i$ and $j$ have the same parity, we either have $i=j$ or $U_i$ and $U_j$ have disjoint supports. In any case, letting $V=U_2\cdots U_{2\lceil n/2\rceil-2}$ and $W=U_1U_3\cdots U_{2\lceil n/2\rceil-1}$, we do have $U=VW$, and both $V$ and $W$ are involutions in $\cfg T$

    If now $U\in \cfg T$ is any periodic map, let $A_n$ be the set of elements whose orbit under $U$ is of size exactly $n$. By periodicity, $\supp U=\bigsqcup_{n\ge 2}A_n$, and each of these sets is $U$-invariant. Hence by induction friendliness, 
    $$U=\lim_{n\to\infty} U_{\bigsqcup_{i=2}^nA_n}= \lim_{n\to\infty}\prod_{i=2}^n U_{A_i}.$$
    Applying our previous findings to the $U_{A_i}$, take $V_i,W_i$ to be involutions (supported on $A_i$) such that $U_{A_i}=V_iW_i$. In particular, as $A_i\cap A_j=\emptyset$, $V_i$ and $W_i$ commute with any $V_j,W_j$ for $j\ne i$. Hence for any $n$, we have 
    $$U_{A_1}\cdots U_{A_n}=V_1W_1\cdots V_nW_n=V_1\cdots V_nW_1\cdots W_n.$$
    As $\mu(\bigsqcup_{i\ge n}A_i)\tendv 0$, $(V_1\cdots V_n)_n$ converges to some involution $V\in [T]$. That $V$ is indeed in $\cfg T$ comes from the fact that for any $x\in X$, there is some $N\in\m N$ such that for any $n\ge N$, all the orbits of $U_{A_n}$ are confined to $T^{\m Z_{<0}}(x)$ or to $T^{\m Z_{>0}}(x)$ (and then it is also the case for the orbits of $V_i$).
\end{proof}

\begin{theo}\label{theo: plein de sgs égaux au dérivé}
    The following normal subgroups of $\cfg T$ are all equal:
    \begin{enumerate}
        \item The subgroup generated by the periodic elements,
        \item The subgroup generated by the involutions,
        \item The closure of the derived subgroup,
        \item The kernel of the index map.
    \end{enumerate}
\end{theo}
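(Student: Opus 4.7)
The plan is to establish a circular chain of inclusions among the four subgroups, which I label $G_1, G_2, G_3, G_4$ in the order stated. Normality of each will be immediate: $G_1$ and $G_2$ are generated by conjugation-invariant subsets (the periodic elements and the involutions, respectively, each of which is preserved under conjugation); $G_3$ is the closure of a normal subgroup; and $G_4$ is the kernel of a continuous morphism.

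Two inclusions are already handled by the preceding lemmas in this subsection. First, $G_1 \subseteq G_2$ because every periodic element of $\cfg T$ was just shown to be a product of two involutions. Second, $G_2 \subseteq G_3$ because every involution was shown to lie in the closure of the derived subgroup. The inclusion $G_3 \subseteq G_4$ also comes essentially for free: the derived subgroup lies in $\ker I$ since the target $\m Z$ is abelian, and since $I$ is continuous $\ker I$ is closed, so it contains the closure of the derived subgroup.

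The only real content is therefore the inclusion $G_4 \subseteq G_1$. I intend to use Proposition \ref{pro: cfg eng par perio & T}, which tells us $\cfg T$ is generated by $T$ together with the periodic elements, so any $S \in \cfg T$ can be written as
$$S = T^{a_1} p_1 T^{a_2} p_2 \cdots T^{a_n} p_n$$
for some periodic $p_i \in \cfg T$ and integers $a_i \in \m Z$ (after grouping consecutive powers of $T$). Applying $I$ and using $I(T)=1$ together with the fact that $I$ vanishes on periodic elements yields $I(S) = a_1 + \cdots + a_n$. Setting $s_i := a_1 + \cdots + a_i$, a direct telescoping computation shows that
$$S = \bigl(T^{s_1} p_1 T^{-s_1}\bigr)\bigl(T^{s_2} p_2 T^{-s_2}\bigr) \cdots \bigl(T^{s_n} p_n T^{-s_n}\bigr) T^{s_n}.$$
When $S \in \ker I$ we have $s_n = 0$, so the trailing factor disappears and what remains is a product of conjugates of periodic elements, each of which is itself periodic. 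This exhibits $S$ as a product of periodic elements, giving $S \in G_1$.

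The only step requiring real thought is the telescoping rearrangement above; everything else follows immediately from the structural results already established in this section. There is no substantial obstacle: the result is essentially a repackaging of Proposition \ref{pro: cfg eng par perio & T} combined with the preceding lemmas on involutions and periodic elements.
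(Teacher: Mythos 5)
Your proof is correct and follows essentially the same route as the paper: the same chain of easy inclusions $G_1 \subseteq G_2 \subseteq G_3 \subseteq G_4$, followed by using Proposition \ref{pro: cfg eng par perio & T} and the index of $T$ to close the loop. Your explicit telescoping rearrangement is precisely the computation the paper compresses by invoking normality of $G_{per}$ and working modulo it (``any $S$ equals $T^n$ modulo $G_{per}$''), so the two arguments are the same in substance.
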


\begin{proof}
    Let $G_{per}$ be the subgroup generated by the periodic maps in $\cfg T$, and let $G_{inv}$ be the subgroup generated by the involutions in $\cfg T$ and $D(\cfg T)$ be the derived subgroup. 
    By the two previous lemmas, we have
    $$G_{per}\le G_{inv}\le \overline{D(\cfg T)}.$$
    Moreover, $I$ is a continuous mapping into an abelian group, so $\overline{D(\cfg T)}\le \ker I$.
    
    Note that $G_{per}$ is a normal subgroup of $\cfg T$, and that by \ref{pro: cfg eng par perio & T}, any $S\in \cfg T$ will be equal to $T^n$ modulo $G_{per}$ for some $n$. But since $G_{per}\le \ker I$ and $I(T)=1\ne 0$, if $S\in \ker I$ we must have $n=0$, i.e. $S\in G_{per}$. Hence $\ker I\le G_{per}$, and we are done.
\end{proof}

\subsection{Transitivity results of the boolean action of \texorpdfstring{$\cfg T$}{[T]com}}
We prove here that "almost" every set can be sent to any other by an element of the commensurating full group.

More precisely, 
\begin{pro}\label{pro: transi de cfg en pmp}
    Let $T\in\Aut(X,\mu)$ be ergodic. Let $A,B\subset X$ be such that $\mu(A)<\mu(B)$. Then there is $S\in\cfg T$ such that $S(A)\subset B$.
\end{pro}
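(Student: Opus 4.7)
I would construct a measurable positive partial isomorphism $\phi\in[[T]]$ with $\mathrm{dom}(\phi)=A$ and $\mathrm{rng}(\phi)\subset B$, i.e., $\phi(x)=T^{n(x)}x$ for a measurable cocycle $n:A\to\m Z_{\ge1}$. This is done by applying Zorn's lemma to the collection of families $(A_i,n_i)_{i\in I}$ with $A_i\subset A$ measurable and pairwise disjoint, $n_i\ge1$, and the sets $T^{n_i}(A_i)\subset B$ pairwise disjoint; any maximal such family is countable. Since $T$ is pmp ergodic, the forward orbit $\bigcup_{n\ge1}T^nC$ has full measure for any $C$ of positive measure (the set is $T$-invariant by a measure-preservation argument, hence either null or conull). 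Therefore if maximality left $\mu(A\setminus\bigsqcup_iA_i)>0$, then since $\mu(B\setminus\bigsqcup_iT^{n_i}(A_i))\ge\mu(B)-\mu(A)>0$, ergodicity would let us extend the family, contradicting maximality. Hence $A=\bigsqcup_iA_i$ a.e., and $\phi(x):=T^{n_i}(x)$ for $x\in A_i$ is the desired positive partial isomorphism.

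Next I would define $S\in[T]$ as the involution $S|_A=\phi$, $S|_{\phi(A)}=\phi^{-1}$, $S=\id$ on $X\setminus(A\sqcup\phi(A))$. By construction $S(A)=\phi(A)\subset B$, so it remains to show $S\in\cfg T$. Identifying each $T$-orbit $O_x$ with $\m Z$ via $T^nx\leftrightarrow n$, the induced involution $\sigma_{O_x}$ swaps $A\cap O_x$ with $\phi(A)\cap O_x$ and is identity elsewhere. Positivity of $\phi$ ensures $\sigma_{O_x}(k)\ge k$ for $k\in A\cap O_x$; the only sign-crossing swaps (elements of $\m N$ exchanged with elements of $\m Z_{<0}$) are therefore of the form $k\leftrightarrow k+n(k)$ with $k\in A\cap O_x\cap\m Z_{<0}$ and $n(k)\ge|k|$.

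The main obstacle is showing that the set of such sign-crossing $k$ is finite on almost every orbit, so that $\sigma_{O_x}\in\szn$ a.e. I plan to address this by refining the exhaustion to be \emph{greedy}: at each stage, $n_{i+1}$ is chosen minimal. Then on each orbit the cocycle $n$ is controlled by the local gap structure of $B$; specifically, $n(k)$ is bounded by the distance from $k$ to the nearest still-unmatched $B$-element in the forward direction, which by Birkhoff's ergodic theorem has typical size $1/(\mu(B)-\mu(A))$, independent of $|k|$. A Borel--Cantelli-type argument, exploiting the integrability of return times in the pmp setting, then ensures that only finitely many $k<0$ satisfy $n(k)\ge|k|$ on each orbit. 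This places $\sigma_{O_x}\in\szn$ almost surely and hence $S\in\cfg T$, completing the proof. The hardest technical point is making the greedy exhaustion yield a cocycle with sufficiently thin tail to apply Borel--Cantelli; if a direct tail estimate fails, an alternative is to subdivide the problem into iterative passes, absorbing any residual mass using the fixed ``excess'' $\mu(B)-\mu(A)>0$ at each stage.
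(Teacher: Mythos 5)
Your overall plan---build a positive orbit-preserving injection $\phi:A\to B$ and take $S$ to be the involution swapping $A$ with $\phi(A)$---is a genuinely different route from the paper's, and the first half (the Zorn/exhaustion argument producing some $\phi\in[[T]]$ with $\phi=T^{n(\cdot)}$, $n\ge 1$, $\mathrm{dom}\,\phi=A$, $\mathrm{rng}\,\phi\subset B$) is correct. The problem is exactly where you flag it: showing that the resulting involution $S$ lands in $\cfg T$, and your argument for this does not hold up. First, the Zorn construction says nothing about the orbitwise structure of the cocycle $n$, and the ``greedy'' refinement you invoke afterwards is a different construction that needs to be defined carefully on each $T$-orbit (there is no canonical ``first stage'' on a two-sided orbit $\simeq\m Z$). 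Second, and more seriously, the quantitative claim that $n(k)$ has ``typical size $1/(\mu(B)-\mu(A))$ independent of $|k|$'' is false: the distance to the nearest still-unmatched $B$-point is governed by the running deficit $\#(A\cap[m,k])-\#(B\cap[m,k])$, which can be arbitrarily large on long stretches. Third, the ``Borel--Cantelli via integrability of return times'' step is not a proof: return times to $B$ are integrable (Kac) but need not lie in any $L^p$ for $p>1$, and in any case you would be trying to bound a sum $\sum_{k<0}\m 1_{n(T^kx)\ge|k|}$ along a single orbit, which is a statement Borel--Cantelli does not address without stationarity/independence structure you do not have. The statement you actually need is that for a.e.\ $x$ only finitely many $a\in A\cap T^{\m Z_{<0}}(x)$ satisfy $\phi(a)\in T^{\m N}(x)$, and if you insist on the FIFO/greedy matching the correct justification is that the ``queue length at the origin,'' namely $\sup_{m\le 0}\bigl(\#(A\cap[m,0])-\#(B\cap[m,0])\bigr)$, is finite a.e.\ because the inner quantity tends to $-\infty$ by the pointwise ergodic theorem applied to $T^{-1}$. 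None of this appears in your write-up, so as written there is a genuine gap.

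For comparison, the paper avoids the tail estimate entirely. Using Birkhoff it finds a set $X_N$ of positive measure on which $\sum_{k=0}^{n-1}\m 1_A\circ T^k<\sum_{k=0}^{n-1}\m 1_B\circ T^k$ for all $n\ge N$, picks $C\subset X_N$ with $C,TC,\ldots,T^{N-1}C$ disjoint, and lets $\mc R_C$ be the finite equivalence relation whose classes are the $C$-to-$C$ orbit segments. Each class then contains at least as many $B$-points as $A$-points, so a simple counting lemma produces $\phi\in[\mc R_C]$ with $\phi(A)\subset B$. The resulting $S$ is a product of permutations of disjoint finite segments of each orbit, hence periodic, hence trivially in $\szn$ on every orbit---no sign-crossing estimate is needed, and one even gets $S\in\ker I$ for free. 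If you want to keep your approach, you should (i) replace the Zorn exhaustion by an explicit orbitwise greedy matching, (ii) prove measurability of the matching across orbits, and (iii) replace the Borel--Cantelli heuristic by the ergodic-theorem argument for finiteness of the deficit at the origin; alternatively, adopt the paper's segment decomposition, which is shorter and requires only one application of Birkhoff.
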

The author thanks François Le Maître for suggesting the proof of this result. 
In order to be able to prove this result, we need the following lemma:
\begin{lem}
    Let $\mc R$ be a finite classes Borel equivalence relation on $X$. Assume $A,B\subset X$ are such that for a.e. $x\in X$,
    $$|[x]_\mc R\cap A|\le |[x]_\mc R\cap B|.$$
    Then there exists $\phi\in[\mc R]$ such that $\phi(A)\subset B$.
\end{lem}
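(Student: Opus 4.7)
The plan is to work class by class using a Borel ordering, after first stratifying $X$ by the type $(n,a,b)$ of each $\mathcal R$-class $C$, where $n=|C|$, $a=|C\cap A|$, and $b=|C\cap B|$.

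First, fix a Borel linear order $<$ on $X$ (for instance pulled back from a Borel isomorphism with $[0,1]$). For integers $0\le a\le b\le n$, let
\[
X_{n,a,b}:=\{x\in X:|[x]_{\mathcal R}|=n,\ |[x]_{\mathcal R}\cap A|=a,\ |[x]_{\mathcal R}\cap B|=b\}.
\]
Each $X_{n,a,b}$ is Borel and $\mathcal R$-invariant, and by assumption these sets partition $X$ (up to null sets). Since the full group $[\mathcal R]$ of a Borel equivalence relation is stable under countable cutting and pasting along $\mathcal R$-invariant Borel sets, it suffices to construct, for each triple $(n,a,b)$ with $a\le b\le n$, an element $\phi_{n,a,b}\in[\mathcal R_{|X_{n,a,b}}]$ sending $A\cap X_{n,a,b}$ into $B\cap X_{n,a,b}$.

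Fix such a triple $(n,a,b)$ and restrict attention to $X_{n,a,b}$. Using $<$, we can Borel-enumerate each class: for $x\in X_{n,a,b}$ let $x^{(1)}<x^{(2)}<\cdots<x^{(n)}$ be the elements of $[x]_{\mathcal R}$ listed in $<$-order, and similarly let $\alpha_1(x)<\cdots<\alpha_a(x)$ be the $<$-increasing enumeration of $[x]_{\mathcal R}\cap A$ and $\beta_1(x)<\cdots<\beta_b(x)$ that of $[x]_{\mathcal R}\cap B$. All these maps are Borel. For $x\in X_{n,a,b}$, let $\gamma_1(x)<\cdots<\gamma_{n-a}(x)$ be the $<$-increasing enumeration of $[x]_{\mathcal R}\setminus A$ and $\delta_1(x)<\cdots<\delta_{n-a}(x)$ that of $[x]_{\mathcal R}\setminus\{\beta_1(x),\ldots,\beta_a(x)\}$; both sets have cardinality $n-a$, so these enumerations make sense and are Borel.

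Now define $\phi$ on $X_{n,a,b}$ by
\[
\phi(\alpha_i(x))=\beta_i(x)\quad (1\le i\le a),\qquad \phi(\gamma_j(x))=\delta_j(x)\quad (1\le j\le n-a).
\]
On each class this is a bijection of $[x]_{\mathcal R}$ sending $[x]_{\mathcal R}\cap A$ into $\{\beta_1(x),\ldots,\beta_a(x)\}\subset[x]_{\mathcal R}\cap B$. Measurability is immediate from the fact that the $\alpha_i,\beta_i,\gamma_j,\delta_j$ are Borel selectors. Gluing the $\phi_{n,a,b}$ together yields the desired $\phi\in[\mathcal R]$ with $\phi(A)\subset B$.

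The only potential obstacle is doing the per-class matching Borel-measurably; this is handled uniformly by stratifying into the Borel pieces $X_{n,a,b}$ and using the ambient Borel order $<$ to give canonical enumerations inside each finite class, which is a standard device for finite-class Borel equivalence relations.
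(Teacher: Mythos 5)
Your proof is correct and takes essentially the same route as the paper's: fix a Borel order to enumerate each finite class, match the $i$-th element of $A\cap C$ to the $i$-th element of $B\cap C$, and then complete to a bijection by similarly matching the complements. The only cosmetic difference is that you first stratify explicitly by the invariant sets $X_{n,a,b}$, while the paper invokes a measurable class ordering directly and builds $\phi$ as $\phi_1\amalg\phi_2$ in the pseudo-full group.
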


\begin{proof}
    Choose an ordering of each $\mc R$-class. Since they are all finite, this can be done in a measurable way. Now define $\phi_1:A\to B$ which, for each $\mc R$-class, sends the first element of $A$ to the first element of $B$, the second element of $A$ to the second element of $B$, etc. The condition ensures that on each $\mc R$ class,we run out of elements of $A$ before we run out of elements of $B$, so that $\phi_1$ is a well-defined element of $[[\mc R]]$ the pseudo-full group of $\mc R$.
    
     We may now apply the same construction to $X\setminus A$ and $X\setminus \phi_1(A)$ (which do satisfy the hypothesis of the lemma), to obtain $\phi_2\in[[\mc R]]$.
    
    It is then easily checked that $\phi=\phi_1\amalg \phi_2$ is as wanted.
\end{proof}

With this lemma in hand, we are now able to prove Proposition \ref{pro: transi de cfg en pmp}.

\begin{proof}
    By Birkhoff's ergodic theorem, for a.e. $x\in X$, there exists some integer $N=N(x)$ such that for all $n\ge N$, 
    $$\sum_{k=0}^{n-1}\m 1_A(T^k(x))<\sum_{k=0}^{n-1}\m 1_B(T^k(x)).$$
    Let $N$ be such that the set $X_N$ of $x\in X$ for which the above property is verified with $N(x)=N$ is of positive measure. Let $C\subset X_N$ be such that $C,T(C),\ldots T^{N-1}(C)$ are all disjoint. For the order relation $\le_T$ on $X$, the set $C$ separates the orbits of $T$ into finite segments $[c_1,c_2[$ by ergodicity. Now we may define the equivalence relation $\mc R_C$ by "being in the same $C$-segment". It is then easy to check that $\mc R_C$ satisfies the hypotheses of the previous lemma, which we readily apply to get $S\in[\mc R_C]$ such that $S(A)\subset B$.
    
    Moreover, $\mc R_C\subset \mc R_T$, so that $S\in[T]$, and the action of $S$ on a $T$-orbit is a product of permutations on disjoint finite segments: it is in $\szn$. As such, $S\in\cfg T$, and even in $\ker I$ since $S$ is periodic.
\end{proof}

Note that the exact proof carries over if we only assume that $\mu$ is $\sigma$-finite if $A,B$ have finite measure thanks to the ratio ergodic theorem.

\section{A bounded complete invariant of flip conjugacy}

We now set out to proving that $\dcfg T$ is both a bounded Polish group and an invariant of flip conjugacy. The fact that $\dcfg T$ is bounded is the only result in this paper which is not analogous to the case of the integrable full group, for which $\overline{D([T]_1)}$ is proven to be unbounded in \cite[Corollary 6.7]{LM21}.

\subsection{The derived commensurating full group is bounded}\label{ssec: dcfg bounded}

\begin{lem}\label{lem: découpe les périos en petits bouts}
    Let $U\in [T]$ be periodic. For any integer $n\ge 1$ we can find a $U$-invariant measurable partition $A_0,\ldots,A_n$ of $X$ such that $\mu(A_i)\le \frac 1n$ for all $i$.
\end{lem}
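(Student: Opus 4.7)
The plan is to use the fundamental-domain decomposition of the finite $U$-orbits, together with the atomlessness of $\mu$ transferred to a measure on a transversal. Since $U \in [T]$ is periodic, every $U$-orbit is finite, and I would begin by writing
$$X = \bigsqcup_{k \ge 1} B_k, \qquad B_k := \{x \in X : |U^{\mathbb Z}(x)| = k\},$$
where each $B_k$ is $U$-invariant (and Borel). For each $k \ge 1$, standard measurable selection for finite-class Borel equivalence relations produces a Borel transversal $F_k \subset B_k$ meeting every $U$-orbit in exactly one point, so that $B_k = \bigsqcup_{j=0}^{k-1} U^j(F_k)$.

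Next, on each $F_k$ I would introduce the finite Borel measure
$$\nu_k(E) := \mu\Bigl(\bigsqcup_{j=0}^{k-1} U^j(E)\Bigr),$$
which is the pushforward of $\mu|_{B_k}$ under the orbit-to-representative map $B_k \to F_k$. Because $\mu$ is atomless and the fibres of this map are the finite $U$-orbits (hence $\mu$-null), $\nu_k$ is itself atomless; in particular $\nu_k \ll \mu|_{F_k}$ (one only needs $U$ to be measure-class preserving here, not measure preserving). By atomlessness of $\nu_k$, I can choose a Borel partition $F_k = F_k^{(0)} \sqcup \cdots \sqcup F_k^{(n)}$ with $\nu_k(F_k^{(i)}) = \mu(B_k)/(n+1)$ for each $i$.

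Saturating each piece gives a $U$-invariant Borel partition
$$B_k = B_k^{(0)} \sqcup \cdots \sqcup B_k^{(n)}, \qquad B_k^{(i)} := \bigsqcup_{j=0}^{k-1} U^j(F_k^{(i)}),$$
with $\mu(B_k^{(i)}) = \mu(B_k)/(n+1)$. Finally, I would set $A_i := \bigsqcup_{k \ge 1} B_k^{(i)}$, which are $U$-invariant and form a Borel partition of $X$ with
$$\mu(A_i) = \sum_{k \ge 1} \frac{\mu(B_k)}{n+1} = \frac{1}{n+1} \le \frac{1}{n},$$
as required. The only points that need a little care are the measurable selection of the transversal $F_k$ and the verification that $\nu_k$ is atomless, and I expect both to be routine given standard descriptive set theory and the hypothesis that $\mu$ is atomless.
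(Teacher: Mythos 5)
Your proof is correct, but it follows a genuinely different route from the paper's. Both arguments begin by stratifying $X$ into the $U$-invariant sets $B_k$ of points with $U$-orbit of size exactly $k$. The paper lumps all large $k$ into a single small piece $A_0$, fixes one fundamental domain $D$ for $U$ on the remaining (bounded-period) part, identifies $D$ with an interval, and slices it via the intermediate value theorem applied to the nondecreasing map $t\mapsto\mu(U^{\m N}([0,t]))$; the cut-off to bounded periods is precisely what makes continuity of this map a one-line check (a finite sum of continuous functions). You instead treat every $B_k$ on the same footing: push $\mu|_{B_k}$ forward to a measure $\nu_k$ on a Borel transversal $F_k$, note that $\nu_k$ is atomless because an atom would be a finite set of positive $\mu$-measure, cut $F_k$ into $n+1$ pieces of equal $\nu_k$-mass, saturate, and union over $k$. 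Your approach dispenses with the cut-off entirely and delivers the slightly sharper conclusion $\mu(A_i)=\frac{1}{n+1}$ exactly; the trade-off is that you invoke a measurable transversal and an abstract atomless-partition fact for each $k$ separately, where the paper uses a single fundamental domain and an explicit one-variable continuity argument. Your remark that atomlessness of $\nu_k$ only needs $U$ to preserve null sets, not $\mu$ itself, is exactly the right observation in the paper's measure-class-preserving setting.
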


\begin{proof}
    For $i\in\m N$, let $B_i=\{x\in X,\ord_U(x)=i\}$ be the set of elements whose orbit under $U$ is of size exactly $i$. Since $U$ is periodic, the $B_i$ form a $U$-invariant partition of $X$. 
    
    Let $N\in\m N$ be such that 
    $$\mu\left(\bigsqcup_{i\ge N}B_i\right)\le \frac 1n,$$
    we let $A_0:=\bigsqcup_{i\ge N}B_i$.
    Now fix a fundamental domain $D$ for the action of $U$ on $Y:=X\setminus A_0$. It has positive measure, as $\mu(Y)\ge \frac {n-1}n$ and $U$ has finite order on $Y$.
    
    Identifying $(D,\mu)$ with $([0,\mu(D)],\mathrm{Leb})$, let
    $\phi:[0,\mu(D)]\to [0,\mu(Y)]$ be the map defined by $\phi(t):=\mu(U^\m N([0,t]))$. $\phi$ is increasing, and 
    $$\phi(t+\epsilon)-\phi(t)\le \sum_{i=0}^N \mu(U^i(]t,t+\epsilon]).$$
    All of the maps $t\mapsto \mu(U^i([0,t]))$ are continuous, so $\phi$ is continuous, hence bijective. Let $t_i:=\phi\inv (\frac in)$ for $0\le i\le n-1$. Let also $t_n=\mu(D)$.
    
    Now we may define the $A_i$ by letting
    $$A_i:=U^\m N([t_{i-1},t_i[).$$
    It is straightforward to see that the $A_i$ are as desired.
\end{proof}

We now need the commensurated analog of \cite[Proposition 6.3]{LM21}, the proof being almost identical.\medskip

\begin{pro}\label{pro:perios denses dans le dérivé}
    The periodic transformations are dense in the topological derived subgroup $\overline{D(\cfg T)}$.
\end{pro}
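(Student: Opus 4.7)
The plan is to show that every element $S$ of $\overline{D(\cfg T)}$ is a limit of periodic elements. By Theorem \ref{theo: plein de sgs égaux au dérivé}, $\overline{D(\cfg T)}$ equals the subgroup generated by periodic elements of $\cfg T$, so every such $S$ can already be written as a finite product $S = U_1 \cdots U_k$ of periodic transformations. The problem thus reduces to approximating such a finite product by a single periodic element in the topology of $\cfg T$.

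The key tool is Lemma \ref{lem: découpe les périos en petits bouts}: given $\epsilon > 0$, for each $U_i$ I would choose $n$ large and decompose $U_i$ along the provided $U_i$-invariant partition, so that $U_i$ becomes a product of periodic pieces $U_i|_{A^{(i)}_0}, \ldots, U_i|_{A^{(i)}_n}$, each supported on a set of measure at most $1/n$. The product $S$ then expands as a long product of such pieces. I would construct a periodic approximation $P$ by keeping the ``safe'' pieces (those whose supports do not participate in linking the two half-orbits of $T$ on any given orbit) unchanged and discarding the remaining pieces, whose total support has measure $O(k/n)$. On the unmodified set, every orbit of the product is contained in a finite union of the $U_i$-orbits of the constituent pieces and hence remains finite, so $P$ is periodic and agrees with $S$ off a set of arbitrarily small measure.

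The main obstacle is that convergence in $\cfg T$ is inherited from $\mathrm L^0(X,\szn)$, which by Lemma \ref{lem: caracterisation convergence szn} requires more than measure-closeness: the approximation must also satisfy, for almost every $x$, that $\Phi(P)(x)$ and $\Phi(S)(x)$ eventually agree on the partition of the $T$-orbit through $x$ into $\m N$ and $\m Z^-$. This is ensured because each periodic $U_i \in \cfg T$ produces only finitely many ``crossings'' between $\m N$ and $\m Z^-$ on each $T$-orbit by the commensurating condition, so for $n$ large enough the finitely many pieces carrying these crossings are precisely those we discard, and the retained pieces preserve the $\m N/\m Z^-$ partition of each orbit, yielding the required finer convergence. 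The delicate bookkeeping is the identification of the ``safe'' pieces in a measurable and $U_i$-invariant way, which is forced by choosing $n$ much larger than the number of crossings on each orbit (a number which, while finite, varies with the orbit, so the discarded set is genuinely of measure only $O(k/n)$ rather than a fixed positive quantity).
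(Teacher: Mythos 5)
The central difficulty you identify---replacing a finite product $U_1\cdots U_k$ of periodic elements by a nearby \emph{single} periodic element---is genuine, but the proposed surgery does not resolve it. The assertion that ``on the unmodified set, every orbit of the product is contained in a finite union of the $U_i$-orbits of the constituent pieces and hence remains finite'' is false in general: already a product of two involutions can have infinite orbits. For instance, if on a given $T$-orbit (identified with $\m Z$) two retained pieces induce the involutions $\sigma_1$ (swapping $2n$ and $2n+1$ for all $n$) and $\sigma_2$ (swapping $2n+1$ and $2n+2$ for all $n$), then $\sigma_1\sigma_2$ shifts the even integers and the odd integers, so all of its orbits are infinite and pass through infinitely many orbits of each factor. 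Shrinking the supports of the pieces and discarding those that ``cross'' the $\m N/\m Z^-$ boundary does nothing to prevent this long-range chaining, since it happens between pieces of \emph{distinct} $U_i$'s and entirely inside $\m N$ (or inside $\m Z^-$). So the map $P$ you construct need not be periodic, and the argument collapses at this step. There is also a measurability problem with ``safe'' pieces: whether the support of a piece participates in a crossing is a property of each orbit, not of the piece as a subset of $X$, so the discarded set is not a union of finitely many pieces per $U_i$ and the $O(k/n)$ measure bound is not available.

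The paper's proof avoids this entirely by never trying to approximate a product of periodic elements by one periodic element. Using hyperfiniteness, write $\mc R_T=\bigcup_n\mc R_n$ as an increasing union of finite Borel equivalence relations; then $\bigcup_n[\mc R_n]$ is a \emph{subgroup} of $\cfg T$ all of whose elements are periodic, since the $[\mc R_n]$ are increasing. Put $H:=\bigcup_n[\mc R_n]\cap\overline{D(\cfg T)}$; because $\overline{H}$ is a closed subgroup, to prove $H$ dense it suffices to show $\overline{H}$ contains a topologically generating set, namely the involutions of $\overline{D(\cfg T)}$ (Theorem~\ref{theo: plein de sgs égaux au dérivé}). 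For an involution $U$, the $U$-invariant sets $A_n:=\{x : x\,\mc R_n\,Ux\}$ increase to $X$, so by induction friendliness $U_{A_n}\to U$, with $U_{A_n}\in[\mc R_n]$ periodic, and $U_{A_n}$ is again an involution hence lies in $\ker I=\overline{D(\cfg T)}$. The ingredient you are missing is precisely this subgroup structure: being periodic is not preserved under products, but ``lying in some fixed $[\mc R_n]$'' is, which converts a delicate orbit-by-orbit bookkeeping problem into a soft density-of-a-subgroup argument.
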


\begin{proof}
    $\mc R_T$ being hyperfinite, let $\mc R_n$ be an increasing sequence of finite equivalence relations on $X$ such that $\mc R_T=\bigcup_n \mc R_n$.
    
    We first prove that $\bigcup_n [\mc R_n]\cap \overline{D(\cfg T)}$ is dense in $\overline{D(\cfg T)}$. Since $\overline{D(\cfg T)}$ is topologically generated by involutions, we need only approximate those by elements of $\bigcup_n [\mc R_n]\cap D(\cfg T)$ to be done. Let $U\in \overline{D(\cfg T)}$ be an involution, and let 
    $$A_n:=\{x\in X,\:x\mc R_n Ux\}.$$
    The $A_n$ are an increasing union of $U$-invariant sets, and $\bigcup_n A_n=X$ since $\bigcup \mc R_n=\mc R_T$. By induction friendliness, $U_{A_n}\in \cfg T$ and $U_{A_n}\to U$, and $U_{A_n}\in [\mc R_n]$ by definition. Now $U_{A_n}$ being an involution, $U_{A_n}\in \ker I=\overline{D(\cfg T)}$ and we are done.\medskip

    Now all elements of $[\mc R_n]$ are periodic, and the result follows.
\end{proof}

\begin{theo}\label{theo: dcfg T bounded}
    $\overline{D(\cfg T)}$ has bounded geometry.
\end{theo}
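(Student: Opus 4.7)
The plan is to show that every symmetric open neighborhood $V$ of $1$ in $\overline{D(\cfg T)}$ satisfies $V^N = \overline{D(\cfg T)}$ for some $N$. First, Proposition \ref{pro:perios denses dans le dérivé} reduces the task to exhibiting $N$ such that every periodic element of $\overline{D(\cfg T)}$ lies in $V^N$: once $V^N$ is open and dense, for any $g\in\overline{D(\cfg T)}$ the open set $gV^{-1}$ meets $V^N$, which gives $g\in V^{N+1}$. After shrinking $V$, I may assume it contains a basic neighborhood
$$V_{\epsilon,m} := \{S : \mu(\{x\in X : \Phi(S)(x)\in G_m\}) > 1-\epsilon\},$$
where $G_m$ is the open subgroup of $\szn$ consisting of permutations fixing $\{-m,\ldots,m\}$ pointwise and preserving $\m N$ setwise.

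Lemma \ref{lem: découpe les périos en petits bouts} writes any periodic $U$ as a product of $n$ commuting periodic transformations $U_{A_0},\ldots,U_{A_{n-1}}$ with $U$-invariant supports of measure $\le 1/n$. Using that every periodic element is a product of two involutions, the problem reduces to placing a single involution $W$ with small support into a uniformly bounded power of $V$. The key estimate is: if $|c_W|\le M$ a.e. and $\mu(\supp W)<\delta$, then for every $x$ outside the exceptional set $E := \bigcup_{|j|\le m+M} T^{-j}(\supp W)$ one has $\Phi(W)(x)\in G_m$, since the cocycle bound prevents any integer of $[-m,m]$ from being moved across $0$ while the support condition gives condition on $\{-m,\dots,m\}$ directly. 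Controlling a uniform Radon-Nikodym bound $L$ for the finitely many iterates $T^j$, $|j|\le m+M$, on a set of large measure yields $\mu(E)\le (2(m+M)+1)L\delta$, so $\delta$ small enough forces $W\in V_{\epsilon,m}$.

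For an involution whose cocycle is unbounded, Lemma \ref{lem: invos approximées par cocycle borné} together with induction friendliness expresses $W$ as a limit of restrictions $W_{B_k}$ to $W$-invariant sets on which the cocycle is bounded, so that $W = W_{B_k}\cdot (W_{B_k}^{-1}W)$ decomposes $W$ into a bounded-cocycle piece (in $V$ by the key estimate once $\delta$ is adjusted for the current bound $M$) and a residual of small support that lies in $V$ for $k$ large enough. The main obstacle is precisely this balancing: the exhaustion depth $k=k(W)$ depends on the particular $W$, while the support threshold $\delta$ in the key estimate shrinks as $M$ grows, so the naive iteration does not immediately produce a uniform $N$. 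Resolving this requires interleaving the fragmentation with the cocycle exhaustion, exploiting the fact that the topology of $\cfg T$ does not see the full $L^1$-norm of the cocycle — which is exactly why $\overline{D(\cfg T)}$ can be bounded even though $\overline{D([T]_1)}$ is not, cf.\ \cite[Corollary 6.7]{LM21}.
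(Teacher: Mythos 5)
Your outline follows the paper's skeleton: use Proposition~\ref{pro:perios denses dans le dérivé} to reduce boundedness to a uniform bound on periodic elements, then invoke Lemma~\ref{lem: découpe les périos en petits bouts} to fragment a periodic $U$ into pieces of small support. Where you diverge is in what you ask of the pieces. You insert a further reduction to involutions, and then try to control the cocycle bound $M$ simultaneously with the support measure $\delta$, since your ``key estimate'' for $\Phi(W)(x)\in G_m$ genuinely needs both. The paper does neither: it fixes a compatible metric $d$ on $\szn$, chooses $n$ with $V_{1/n}\subset V$, fragments $U$ into $U_{A_i}$ with $\mu(A_i)\le 1/n$, and asserts that $\mathrm{supp}(U_{A_i})\subset A_i$ already gives $U_{A_i}\in V_{1/n}$.

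The difficulty you run into is a genuine gap, and you name it yourself: the exhaustion depth $k$ from Lemma~\ref{lem: invos approximées par cocycle borné} depends on $W$, while the admissible $\delta$ in your key estimate shrinks as $M$ grows, and you do not actually carry out the ``interleaving'' that would make the number of steps uniform in $W$. Without that, the proposal does not prove boundedness. I would add that your suspicion is exactly on target: small support does \emph{not} by itself put an element near the identity in $\cfg T$, because $\m N$-preservation is governed by the cocycle rather than by the measure of the support. For a concrete witness in the pmp case, take a Rokhlin tower of height $2k$ with base $B$ and let $W_k$ swap $y$ with $T^{2k-1}(y)$ for $y\in B$; then $\mu(\mathrm{supp}\,W_k)\approx 1/k\to 0$, yet $\Phi(W_k)(x)(\m N)\ne\m N$ for every $x\in T^j(B)$ with $1\le j\le 2k-2$, a set of measure tending to $1$, so $W_k\not\to\id$ in $\cfg T$. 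This shows that the step ``$\mathrm{supp}(U_{A_i})\subset A_i\Rightarrow U_{A_i}\in V_{1/n}$'' in the paper's proof is not automatic for the fragmentation of Lemma~\ref{lem: découpe les périos en petits bouts} as stated, so the missing idea in your proposal --- a fragmentation whose pieces have small support \emph{and} uniformly bounded cocycle at the same time, with constants depending only on $V$ --- is in fact the crux of the theorem and is not supplied by the paper's write-up either.
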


\begin{proof}
    Let $V$ be a neighborhood of $\id$ in $\overline{D(\cfg T)}$, we will prove that there is some $n$ for which $\overline{D(\cfg T)}=V^n$. Denote by $P$ the set of periodic maps in $\overline{D(\cfg T)}$. First, note that if we prove that there is some $n$ such that $P\subset V^n$, we would be through. This comes from Proposition \ref{pro:perios denses dans le dérivé}, and the fact that multiplying a dense subset by a neighborhood of $\id$ always gives the whole group, so $V^{n+1}\supset VP=D(\cfg T)$.
    
    Fix a compatible distance $d$ on $\szn$. The family 
    $$V_\epsilon =\{S\in \cfg T, \mu(\{d(\id,\Phi(S)(x))>\epsilon\})<\epsilon\}$$
    is a basis of neighborhoods of $\id$, let $n$ be such that $V_{1/n}\subset V$. Let $U\in P$. By Lemma \ref{lem: découpe les périos en petits bouts}, we can find a $U$-invariant partition $A_i,\:0\le i\le n$ with $\mu(A_i)\le \frac 1n$. Write 
    $$U=\prod_i U_{A_i}.$$
    Since $\mathrm{supp}(U_{A_i})\subset A_i$, $U_{A_i}\in V_{1/n}\subset V$. Hence $U\in V^{n+1}$ and we are done.
\end{proof}

\begin{cor}
    $\cfg T$ is quasi-isometric to $\m Z$.
\end{cor}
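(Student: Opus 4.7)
The goal is to show that the continuous surjective morphism $I \colon \cfg T \to \m Z$ is a quasi-isometry when $\cfg T$ is equipped with any maximal compatible left-invariant metric. Surjectivity of $I$ handles quasi-density of the image, so the content lies in establishing that $I$ is a quasi-isometric embedding.

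First, I would establish the existence of a maximal metric on $\cfg T$ via Rosendal's criterion (\cite{Ros22}): a Polish group admits a maximal compatible left-invariant metric iff it is generated by a coarsely bounded set. The kernel $\ker I = \overline{D(\cfg T)}$ (by Theorem \ref{theo: plein de sgs égaux au dérivé}) is open and closed in $\cfg T$, since $\m Z$ carries the discrete topology, and it is bounded as a Polish group by Theorem \ref{theo: dcfg T bounded}. As a bounded closed subgroup it is coarsely bounded inside $\cfg T$: any compatible left-invariant metric on $\cfg T$ restricts to a compatible left-invariant metric on $\overline{D(\cfg T)}$, which is then automatically bounded. Consequently $\overline{D(\cfg T)} \cup \{T, T\inv\}$ is a coarsely bounded generating set of $\cfg T$, since every $S$ decomposes as $S = (ST^{-I(S)}) \cdot T^{I(S)}$ with first factor in $\ker I$.

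Next, fix such a maximal metric $d$ and let $D$ denote the finite $d$-diameter of $\overline{D(\cfg T)}$. For the Lipschitz bound, observe that $(S_1, S_2) \mapsto |I(S_1) - I(S_2)|$ is a continuous left-invariant pseudo-metric on $\cfg T$, so by maximality of $d$ there exist constants $A, B > 0$ such that $|I(S_1) - I(S_2)| \leq A \cdot d(S_1, S_2) + B$. For the reverse bound, writing $n = I(S_2) - I(S_1)$ we have $S_1\inv S_2 = Y \cdot T^n$ with $Y = S_1\inv S_2 T^{-n} \in \overline{D(\cfg T)}$; then left-invariance and the triangle inequality give
$$d(S_1, S_2) \;=\; d(1, Y T^n) \;\leq\; d(1, Y) + d(1, T^n) \;\leq\; D + d(1, T) \cdot |I(S_2) - I(S_1)|.$$
Together these two inequalities show $I$ is a quasi-isometric embedding, hence a quasi-isometry.

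The main subtlety is checking the hypotheses of Rosendal's theorem: specifically, that boundedness of $\overline{D(\cfg T)}$ as an (abstract) Polish group transfers to coarse boundedness inside the ambient group $\cfg T$. This relies on the openness of $\overline{D(\cfg T)}$ as a subgroup of $\cfg T$ (which follows from $\m Z$ being discrete), together with the standard observation that compatible left-invariant metrics on $\cfg T$ restrict to compatible left-invariant metrics on any closed subgroup.
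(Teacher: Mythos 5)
Your proof is correct, and the two key ingredients are the same as in the paper's argument: the boundedness of $\dcfg T$ (Theorem \ref{theo: dcfg T bounded}) and the decomposition $S = U T^{I(S)}$ with $U \in \ker I = \dcfg T$, which comes from Theorem \ref{theo: plein de sgs égaux au dérivé}. The difference is in how existence of a maximal metric is handled. You appeal to Rosendal's criterion (generation by a coarsely bounded set), verify that $\dcfg T \cup \{T, T\inv\}$ is such a set, and then check both directions of the quasi-isometric embedding estimate separately; note that coarse boundedness of $\dcfg T$ already follows from its being a closed subgroup with the subspace topology that is bounded as a Polish group, so the openness of $\ker I$ is not actually needed for that step, and the Lipschitz direction uses the fact that maximality with respect to compatible distances extends to continuous left-invariant pseudometrics (add a compatible metric to the pseudometric to make it compatible). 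The paper instead writes down the explicit norm $\|S\| = |S| + |I(S)|$ with $|\cdot|$ any bounded compatible norm and proves maximality directly by dominating an arbitrary compatible norm $\|\cdot\|'$, using exactly your ``reverse bound'' computation; since $\|S\| - |I(S)| = |S|$ is bounded, the fact that $I$ is a quasi-isometric embedding then falls out for free. The paper's route is a little shorter and produces a concrete maximal norm without invoking the general existence theorem, while yours isolates the role of coarse boundedness more transparently; both are valid.
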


\begin{proof}
    Let $|\cdot|$ be any bounded compatible norm on $\cfg T$. We let
    $$\|S\|=|S|+|I(S)|,$$
    and we will prove that $\|\cdot\|$ is a maximal norm on $\cfg T$. In particular, 
    $$I:\cfg T\to\m Z$$
    will be a quasi-isometry.
    
    Let $\|\cdot\|'$ be a compatible norm on $\cfg T$. $\|\cdot\|'$ is bounded in restriction to $\dcfg T$, say by $C$.
    
    By Theorem \ref{theo: plein de sgs égaux au dérivé}, any $S\in \cfg T$ can be written as $S=UT^n$, where $U\in\dcfg T$ and $n=I(S)$. Hence,
    $$\|S\|'\le \|U\|'+|n|\underbrace{\|T\|'}_{C'}\le C+|I(S)|C'\le C+C'\|S\|$$
    and we are done.
\end{proof}

\subsection{Complete invariants of flip conjugacy}\label{ssec: complete invts of flip conj}
\begin{pro}\label{pro: Z-orbites -> flip conj}
    Let $U\in \cfg T$ be such that $T$ and $U$ have the same $\m Z$-orbits. Then $T$ and $U$ are flip-conjugate by an element of $[T]$.
\end{pro}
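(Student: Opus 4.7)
My plan is first to pin down the index $I(U)$, then reduce to the case $I(U)=1$, and finally construct a conjugating element in $[T]$. Since $T$ is ergodic (hence aperiodic) and $U\in[T]$ shares its orbits, the induced permutation $\sigma_x := \Phi(U)(x)\in\szn$ has, for a.e. $x$, a single orbit equal to all of $\m Z$. By the geometric description of the index as the number of $\sigma$-orbits going from $-\infty$ to $+\infty$ minus those going from $+\infty$ to $-\infty$, this unique orbit contributes $+1$ or $-1$ depending on its asymptotic direction, so $I(U)\in\{-1,+1\}$. If $I(U)=-1$, I replace $U$ by $U\inv$, which still lies in $\cfg T$ with the same orbits as $T$, and any conjugation between $U\inv$ and $T$ provides a flip-conjugation between $U$ and $T$. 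Thus I may assume $I(U)=+1$ and look for $V\in[T]$ with $VUV\inv=T$.

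Writing $V(x)=T^{d(x)}(x)$ for a measurable cocycle $d:X\to\m Z$, the identity $VU=TV$ unfolds to the coboundary equation
\[
d(Ux)-d(x)=1-c_U(x).
\]
Pointwise on each $T$-orbit identified with $\m Z$ (with $T$ as the shift $s$ and $U$ as $\sigma$), the map $v(n):=n+d(T^n(x_0))$ satisfies $v\sigma v\inv=s$; since $\sigma$ has a single $\m Z$-orbit and index $+1$, this $v$ is automatically a bijection of $\m Z$, so $V$ restricts to a bijection on every $T$-orbit and belongs to $[T]$. The whole proof therefore reduces to exhibiting a measurable $d$ satisfying the coboundary equation.

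The main obstacle is the measurable solvability of that equation across orbits: on each orbit a solution is unique only up to an additive constant, and ergodicity of $T$ precludes a Borel transversal for fixing this constant uniformly. I expect to handle this following the strategy of Katznelson's argument recalled in \cite[Appendix A]{CJMT23}. Using the structural decompositions (Propositions \ref{pro: decomp perio/+/-}, \ref{pro: almost positive is positive up to a periodic}, \ref{pro: positive is product of induced}) and Lemma \ref{lem: perio=lim bdd cocycles}, one approximates $U$ by elements supported on regions where $c_U$ is bounded; on each such region a Rokhlin-tower construction yields a partial conjugation $V_N\in[T]$ satisfying $V_NUV_N\inv=T$ on an exhausting sequence of $U$-invariant sets. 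Induction friendliness of $\cfg T$ then delivers a limit $V\in[T]$ for which the identity $VUV\inv=T$ persists.
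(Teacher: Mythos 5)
Your reduction to $I(U)=+1$ via the geometric description of the index is correct and achieves the same end as the paper's argument, which instead writes $U = U_p U_+ U_-$ (Proposition~\ref{pro: decomp perio/+/-}), observes that $U_p = \id$ since $U$ shares the infinite orbits of $T$, and kills one of $U_\pm$ by ergodicity; both routes reduce to the almost-positive case. Your derivation of the coboundary equation $d(Ux)-d(x) = 1 - c_U(x)$, and the observation that any measurable solution produces a bijection $V\in[T]$ conjugating $U$ to $T$, are also fine.

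The genuine gap is in your final paragraph. You correctly identify that the content of the proposition is the measurable solvability of the coboundary equation across orbits, and the paper disposes of exactly this by invoking \cite[Theorem A.1]{CJMT23} wholesale, simply noting that Katznelson's proof never uses that $T$ preserves $\mu$; that is a legitimate, if terse, citation. Your sketch of a substitute argument, however, cannot work as stated. Because $U$ shares the orbits of the ergodic transformation $T$, $U$ is itself ergodic and $\supp U = X$; hence there is no nontrivial $U$-invariant set, in particular no exhausting sequence of $U$-invariant sets on which to build partial conjugators, and the regions where $c_U$ is bounded do not yield invariant induced pieces of $U$. Even setting that aside, a solution $d$ is determined only up to one additive constant per orbit, so any partial conjugators $V_N$ carry unconstrained degrees of freedom and there is no reason for them to converge; and induction friendliness governs $U_{A_n}\to U_A$ for induced transformations, not convergence of a sequence of conjugators. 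The mechanism in \cite[Appendix A]{CJMT23} for pinning down these constants is genuinely different from the tower/exhaustion heuristic you describe, so if you want a self-contained proof rather than the paper's citation you would need to reproduce that argument rather than fill in this sketch.
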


\begin{proof}
    Write $U=U_pU_+U_-$ according to Proposition \ref{pro: decomp perio/+/-}. Since $U$ has the same orbits as $T$, none of them are finite, hence $U_p=\id$. Moreover, $\supp U_+$ and $\supp U_-$ are disjoint and $U$-invariant, hence $T$-invariant: by ergodicity one of them has to be null. Thus, up to changing $U$ by $U\inv$, we may assume that $U$ is almost positive. \medskip

    We have thus arrived at the assumption $|T^\m N(x)\triangle U^\m N(x)|<\infty$. This is the hypothesis of \cite[Theorem A.1]{CJMT23}, and the proof does not use anywhere the fact that $T$ is pmp. We can thus conclude that $T$ and $U$ are conjugated by an element $S$ of $[T]$.
\end{proof}

We will need the following definition, due to Fremlin:

\begin{dfn}
    Let $G\le \Aut(X,[\mu])$ be a subgroup. $G$ is said to have \emph{many involutions} if for any non-null Borel subset $A$ of $X$, there is a non-trivial involution $U$ with $\supp U\subset A$.
\end{dfn}

\begin{ex}
    $[T]$ and $\cfg T$ have many involutions. This comes from Lemma \ref{lem: invo de support donné} for $[T]$, and we additionally need Lemma \ref{lem: invos approximées par cocycle borné} for $\cfg T$.
\end{ex}

The following is \cite[384D]{Fre04}:
\begin{theo}\label{theo:Fremlin}
    Let $G, H \le \Aut(X, [\mu])$ be two groups with many involutions. Then any isomorphism between $G$ and $H$ is the conjugacy by some non-singular transformation: for any group isomorphism $\rho : G \to H$, there is $S \in \Aut(X,[\mu])$ such that for all $T \in G$,
    $\rho(T) = STS\inv$.
\end{theo}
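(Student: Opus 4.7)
The plan is to reconstruct the measure algebra $\mathrm{MAlg}(X,[\mu])$, together with the conjugation action of $G$ on it, purely from the abstract group structure of $G$, and then invoke the standard fact that Boolean automorphisms of the measure algebra of a standard probability space are realised by non-singular point transformations. The ``many involutions'' hypothesis is the crucial bridge: it guarantees that involutions in $G$ have supports which are cofinal in the non-null Borel subsets of $X$, so group-theoretic information about involutions should be rich enough to encode the whole measure algebra.

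The first step is to find a formula $\phi(U,V)$ in the language of groups such that, for involutions $U, V \in G$, $\phi(U,V)$ holds iff $\supp U \subseteq \supp V$. A natural line of attack is to exploit commutation: two involutions commute iff their supports either coincide on a common piece with a twisted identification or are disjoint, and many involutions lets one ``test'' supports by producing smaller involutions inside any prescribed non-null set. With $\phi$ in hand, the equivalence relation ``same support'' becomes group-theoretically definable, its quotient inherits from $G$ a lattice structure matching $\cup$ and $\cap$ in the measure algebra, and density in $\mathrm{MAlg}(X,[\mu])$ (again from many involutions) lets one take a Boolean completion and recover $\mathrm{MAlg}(X,[\mu])$ in full. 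All of this construction is invariant under the group isomorphism $\rho : G \to H$, yielding a Boolean algebra automorphism $\Phi : \mathrm{MAlg}(X,[\mu]) \to \mathrm{MAlg}(X,[\mu])$ equivariant in the sense that $\Phi(g \cdot A) = \rho(g) \cdot \Phi(A)$ for every $g \in G$ and every measurable $A$.

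The second step invokes the classical theorem (essentially von Neumann's) that every automorphism of $\mathrm{MAlg}(X,[\mu])$ is induced by some $S \in \Aut(X,[\mu])$. Since an element of $\Aut(X,[\mu])$ is determined up to null sets by its action on the measure algebra, the equivariance of $\Phi$ translates directly into $\rho(T) = STS\inv$ for every $T \in G$, which is the desired conclusion. The main obstacle is clearly the first step: producing the formula $\phi$ characterising support inclusion using \emph{only} the group operations of $G$, knowing that $G$ may be a proper and rather constrained subgroup of $\Aut(X,[\mu])$ (such as $[T]$ or $\cfg T$). One cannot quantify over arbitrary transformations outside $G$, and the many-involutions assumption must be squeezed hard to produce enough auxiliary involutions to probe every configuration; Fremlin's own treatment in \cite[\S 384]{Fre04} proceeds by a delicate combinatorial analysis of how pairs and triples of involutions can overlap, which is where the technical heart of the argument lies.
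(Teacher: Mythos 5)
The paper does not give its own proof of this theorem: it is stated as a direct citation of \cite[384D]{Fre04}, with no argument supplied. So there is no ``paper's proof'' to compare against beyond the citation itself.

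Your outline correctly identifies the strategy behind Fremlin's result: one reconstructs the measure algebra and the conjugation action of $G$ on it from the abstract group structure, using the many-involutions hypothesis to make the supports of involutions cofinal in $\mathrm{MAlg}(X,[\mu])$, and then applies the von Neumann realization theorem (every Boolean automorphism of the measure algebra of a standard probability space is induced by a non-singular point transformation). The second step and the passage from equivariance of the Boolean automorphism $\Phi$ to the identity $\rho(T)=STS^{-1}$ are fine.

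However, as a proof your proposal has a genuine gap, and you acknowledge it yourself: the entire technical content is in step one, the construction and verification of a group-theoretic formula $\phi(U,V)$ detecting support inclusion, and you never produce it. You only say a ``natural line of attack'' exists and then defer to ``Fremlin's own treatment in \cite[\S 384]{Fre04}.'' That is not a proof of the key step but a pointer to one, which is exactly what the paper does with its citation. Moreover, the one concrete claim you make in that direction --- that ``two involutions commute iff their supports either coincide on a common piece with a twisted identification or are disjoint'' --- is imprecise as stated and would not by itself yield support \emph{inclusion}: commuting involutions can have overlapping supports in several configurations, and turning commutation data into a Boolean-algebra embedding requires precisely the combinatorial analysis of multiple overlapping involutions that Fremlin carries out and that you leave to the reader. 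So the reconstruction strategy is right, the realization step is right, but the argument that actually does the work is missing.
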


This is the commensurated equivalent of \cite[4.2]{LM18}.

\begin{theo}\label{theo: cfg T invar flip conj}
    Let $T_1,T_2\in\Aut(X,[\mu])$ be ergodic transformations. The following are equivalent:
    \begin{enumerate}
        \item $\cfg {T_1}$ and $\cfg{T_2}$ are isomorphic as abstract groups,
        \item $\cfg {T_1}$ and $\cfg{T_2}$ are isomorphic as topological groups,
        \item $T_1$ and $T_2$ are flip-conjugate.
    \end{enumerate}
\end{theo}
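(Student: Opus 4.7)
The plan is to prove the cycle $(3)\Rightarrow(2)\Rightarrow(1)\Rightarrow(3)$. The implication $(3)\Rightarrow(2)$ follows by conjugation: if $S\in\Aut(X,[\mu])$ satisfies $ST_1S\inv=T_2^{\pm 1}$, then conjugation by $S$ gives an isomorphism $\cfg{T_1}\to\cfg{T_2^{\pm 1}}$, and one only needs to observe that $\cfg{T_2^{\pm 1}}=\cfg{T_2}$. Indeed, $\cfg T$ depends only on which subsets of a $T$-orbit are commensurate to the "positive half-orbit", and $\m N$ and $-\m N$ are complementary, so commensurating one is the same as commensurating the other. Continuity of conjugation makes this a topological isomorphism. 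The implication $(2)\Rightarrow(1)$ is immediate.

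The content is $(1)\Rightarrow(3)$. Let $\rho:\cfg{T_1}\to\cfg{T_2}$ be any abstract group isomorphism. As already noted, both $\cfg{T_1}$ and $\cfg{T_2}$ have many involutions, so Fremlin's Theorem \ref{theo:Fremlin} applies and yields $S\in\Aut(X,[\mu])$ with $\rho(U)=SUS\inv$ for every $U\in\cfg{T_1}$. In particular $S\cfg{T_1}S\inv=\cfg{T_2}$.

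Set $U:=ST_1S\inv=\rho(T_1)\in\cfg{T_2}\subset[T_2]$, so every $U$-orbit is contained in a $T_2$-orbit. Symmetrically, $S\inv T_2S\in\cfg{T_1}\subset[T_1]$, hence after conjugating back by $S$ every $T_2$-orbit is contained in a $U$-orbit. Therefore $U$ and $T_2$ share the same orbits. Since $U$ is conjugate to the ergodic $T_1$, it is itself ergodic, and we may apply Proposition \ref{pro: Z-orbites -> flip conj} to produce $R\in[T_2]$ such that $RUR\inv=T_2^{\pm 1}$. Then $(RS)T_1(RS)\inv=T_2^{\pm 1}$, so $T_1$ and $T_2$ are flip-conjugate.

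The main obstacle is conceptual rather than computational: it consists in recognising that the earlier-developed tools (the verification that $\cfg T$ has many involutions, Fremlin's spatial realisation result, and Proposition \ref{pro: Z-orbites -> flip conj} that extracts flip-conjugacy from same-orbits) combine without friction. Once Fremlin gives a spatial $S$, the orbit-equivalence argument above is the only thing to check, and it is essentially formal because membership in $\cfg{T_i}$ forces inclusion in $[T_i]$ in both directions.
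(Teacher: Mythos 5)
Your proof is correct and follows essentially the same route as the paper: apply Fremlin's spatial realisation theorem to the abstract isomorphism, deduce that $ST_1S^{-1}$ and $T_2$ have the same orbits via the two-way inclusion in $[T_1]$ and $[T_2]$, and then invoke Proposition \ref{pro: Z-orbites -> flip conj}. You spell out the $(3)\Rightarrow(2)$ step and the $\cfg{T^{-1}}=\cfg T$ observation more explicitly than the paper does, but the logical content is identical.
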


\begin{proof}
    We have 3 implies 2 and 2 implies 1. Now assume 1. By Theorem \ref{theo:Fremlin}, there is $S\in\Aut(X,[\mu])$ such that $\cfg {T_2}=S\cfg {T_1}S\inv$. In particular, $ST_1S\inv\in\cfg {T_2}$.
    
    Assume $ST_1S\inv$ does not have the same orbits as $T_2$. Then $S\inv T_2S$ would not have its orbits included in those of $T_1$, contradiction. Hence $ST_1S\inv$ does indeed have the same orbits as $T_2$. We can thus apply Proposition \ref{pro: Z-orbites -> flip conj} to conclude that $ST_1S\inv$ and $T_2$ are flip conjugate, and we are done.
\end{proof}

We now set out to proving the same result for $\dcfg T$ instead of $\cfg T$. For this we will need the following (well-known) technical lemma:

\begin{lem}\label{lem: partition foireuse}
    Let $T\in\Aut(X,[\mu])$. Then there is a Borel partition $(A_1,A_2,B_1,B_2,B_3)$ of $\supp T$ such that $T(A_1)=A_2$, $T(B_1)=B_2$ and $T(B_2)=B_3$.
\end{lem}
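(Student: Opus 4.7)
The key combinatorial observation is that every integer $n\ge 2$ lies in the numerical semigroup $2\m N+3\m N$, so each $T$-orbit of size $n$ (including $n=\infty$) can be tiled by consecutive blocks of length $2$ or $3$. My plan is to realize such a tiling Borel-measurably, with each $2$-block contributing its two points to $(A_1,A_2)$ and each $3$-block contributing its three points to $(B_1,B_2,B_3)$. First I would split $\supp T = X_\infty \sqcup \bigsqcup_{n\geq 2} X_n$, where $X_\infty$ is the aperiodic part of $T$ and $X_n$ consists of points lying on a $T$-orbit of cardinality exactly $n$; both are $T$-invariant Borel sets.

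On each $X_n$ the orbit equivalence relation of $T\mid_{X_n}$ has finite classes and hence admits a Borel transversal $D_n$ (by Lusin–Novikov). Fixing a decomposition $n=2a_n+3b_n$ with $a_n,b_n\in\m N$, I would place on each orbit $\{T^k(x):0\le k<n\}$, $x\in D_n$, the blocks $\{x,T(x)\},\{T^2(x),T^3(x)\},\ldots$ (that is, $a_n$ consecutive $2$-blocks followed by $b_n$ consecutive $3$-blocks), and assign each block to $(A_1,A_2)$ or $(B_1,B_2,B_3)$ in the obvious way, Borel-measurably via the bijection $(x,k)\mapsto T^k(x)$.

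For the aperiodic part $X_\infty$ I would invoke a standard marker lemma: there is a Borel subset $M\subseteq X_\infty$ meeting every $T$-orbit, maximal subject to the requirement that any two distinct elements of $M$ in the same $T$-orbit are at $T$-distance at least $2$. Maximality forces every consecutive gap of $M$ along each orbit to lie in $\{2,3\}$, since if $m'=T^k(m)$ were the next $M$-point after $m$ with $k\ge 4$, then $T^2(m)$ could be adjoined to $M$ without violating the spacing condition, contradicting maximality. Setting $A_1:=\{m\in M:T^2(m)\in M\}$, $B_1:=M\setminus A_1$, $A_2:=T(A_1)$, $B_2:=T(B_1)$, $B_3:=T^2(B_1)$ then yields a Borel partition of $X_\infty$ of the desired form; combining with the periodic contributions gives the lemma.

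The main technical ingredient is the existence of such a maximal Borel set $M$ on $X_\infty$, which is the classical Weiss marker lemma for aperiodic Borel $\m Z$-actions. In our measure-class preserving setting it can alternatively be obtained by passing to an equivalent probability measure and applying the Kakutani–Rokhlin lemma (or, working directly, by a transfinite exhaustion of Borel additions $T^2(m)$ to $M$). Everything else is a routine reduction to finite and cyclic orbits via Borel transversals.
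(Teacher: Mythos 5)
Your proof is correct, and it is essentially the same marker/tiling argument that underlies the cited reference (LM18, Lemma 2.7 and the remark following it): decompose $\supp T$ into periodic and aperiodic parts, use a Borel transversal and the fact that every $n\ge 2$ lies in the numerical semigroup $\langle 2,3\rangle$ for the finite orbits, and use a marker set with gaps in $\{2,3\}$ for the aperiodic part. The only step whose justification should be tightened is the existence of a \emph{maximal} Borel $2$-separated complete section $M$ on the aperiodic part. ``Transfinite exhaustion of Borel additions'' is not a valid mechanism by itself (such a recursion has no reason to stabilise at a Borel stage), and the Weiss marker lemma in its usual form yields decreasing complete sections with vanishing intersection rather than a maximal independent set. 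Two standard ways to repair this: (a) the distance-one graph on aperiodic $T$-orbits has degree two, hence Borel chromatic number at most $3$ (Kechris--Solecki--Todor\v cevi\'c), and one builds a maximal Borel independent set greedily by adding the three colour classes in turn; or (b) invoke the bounded-gap marker lemma to get a Borel complete section with all gaps in some interval $[2,N]$, and then refine each gap canonically via the decomposition of its length into $2$s and $3$s, which avoids any appeal to maximality. Given $M$ (and note that maximality also forces $M$ to be bi-infinite along each orbit, so ``next marker'' is everywhere defined), your definitions of $A_1,A_2,B_1,B_2,B_3$ from the gap pattern do yield a Borel partition with the required $T$-action, and the Lusin--Novikov transversal argument on the finite-orbit part is routine and correct.
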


\begin{proof}
    See for instance \cite[2.7]{LM18}, along with the Remark below it.
\end{proof}

\begin{lem}\label{lem: fin full closure of derived sg}
    The finitely full closure $\mathrm{cl}(\dcfg T)$ of $\dcfg T$ is $\cfg T$.
\end{lem}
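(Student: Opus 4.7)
First, I will reduce the statement to showing that $T \in \mathrm{cl}(\dcfg T)$. The inclusion $\mathrm{cl}(\dcfg T) \subseteq \cfg T$ is automatic since $\cfg T$ itself is finitely full; and by Theorem \ref{theo: plein de sgs égaux au dérivé} combined with $I(T) = 1$, the group $\cfg T$ is algebraically generated by $\dcfg T \cup \{T\}$, so putting $T$ into $\mathrm{cl}(\dcfg T)$ will finish the proof.

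To realize $T$ as a finite cut-and-paste of elements of $\dcfg T$, I will apply Lemma \ref{lem: partition foireuse} to obtain a Borel partition $(A_1,A_2,B_1,B_2,B_3)$ of $\supp T$ with $T(A_1)=A_2$, $T(B_1)=B_2$, $T(B_2)=B_3$. Since $T$ is a bijection of $\supp T$, bookkeeping gives $T(A_2) \sqcup T(B_3) = A_1 \sqcup B_1$. For each $C \in \{A_1,A_2,B_1,B_2,B_3\}$, I define $\sigma_C$ to be the involution that swaps $C$ with $T(C)$ via $T$ and fixes everything else; these swaps are well defined because $C$ and $T(C)$ always lie in different pieces of the partition of $\supp T$ (for $C\in\{A_1,B_1,B_2\}$ this is immediate, and for $C\in\{A_2,B_3\}$ one uses $T(C)\subseteq A_1\cup B_1$).

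The six elements $\sigma_{A_1}, \sigma_{A_2}, \sigma_{B_1}, \sigma_{B_2}, \sigma_{B_3}$, together with $\id$ on $X\setminus\supp T$, will be glued along the six-piece partition $(A_1,A_2,B_1,B_2,B_3,X\setminus\supp T)$ of $X$, with each piece carried by the corresponding element. By construction the glued map equals $T$, and the images of the pieces are $A_2, T(A_2), B_2, B_3, T(B_3), X\setminus\supp T$; these are pairwise disjoint with union $X$ thanks to $T(A_2) \sqcup T(B_3) = A_1 \sqcup B_1$, so the gluing is legal and finite fullness of $\mathrm{cl}(\dcfg T)$ yields $T \in \mathrm{cl}(\dcfg T)$.

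The main technical point is checking that each $\sigma_C$ actually belongs to $\cfg T$ (and thus, being an involution, lies in $\dcfg T$ by Theorem \ref{theo: plein de sgs égaux au dérivé}). This comes down to a small check on each $T$-orbit identified with $\m Z$: because $T(C)$ is one $T$-step ahead of $C$, the induced permutation is a disjoint product of transpositions of the form $(n,n+1)$, one for each $n$ with $x_n \in C$. No two such transpositions can overlap (otherwise $C \cap T(C) \neq \emptyset$), and at most one of them involves both a negative and a non-negative integer, so $|\sigma_C(\m N) \triangle \m N| \leq 2$ and $\sigma_C \in \szn$ orbit by orbit, hence $\sigma_C \in \cfg T$.
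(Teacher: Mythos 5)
Your proof is correct and follows essentially the same route as the paper: you invoke Lemma \ref{lem: partition foireuse} to get the same $(A_1,A_2,B_1,B_2,B_3)$ partition and write $T$ as a finite cut-and-paste of involutions (which lie in $\dcfg T$ by Theorem \ref{theo: plein de sgs égaux au dérivé}); the only cosmetic difference is that you cut into five pieces of $\supp T$ with five single-swap involutions $\sigma_C$, whereas the paper glues three larger involutions $U_1,U_2,U_3$ over four pieces (your $\sigma_C$ are just restrictions of those $U_i$). Your explicit verification that $\sigma_C\in\cfg T$ is fine, though one can note even more quickly that $c_{\sigma_C}\in\{-1,0,1\}$, so $\sigma_C\in[T]_\infty\subset\cfg T$.
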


\begin{proof}
    $\cfg T$ is finitely full, and generated by $\dcfg T$ and $T$, so we only need to prove that $T$ can be obtained as a finite cutting and pasting of elements of $\dcfg T$.
    
    To do so we proceed in the same way as \cite[6.13]{LM21}. Apply Lemma \ref{lem: partition foireuse} to $T$ to get a partition $(A_1,A_2,B_1,B_2,B_3)$ of $\supp T$ such that $T(A_1)=A_2$, $T(B_1)=B_2$ and $T(B_2)=B_3$. Then define $U_1,U_2,U_3$ by
    
    \begin{align*}
    U_1(x)&=\begin{cases}
        T(x)\text{ if }x\in A_1\sqcup B_1,\\
        T\inv(x)\text{ if }x\in A_2\sqcup B_2,\\
        x\text{ otherwise,}
    \end{cases}\\
    U_2(x)&=\begin{cases}
        T(x)\text{ if }x\in B_2,\\
        T\inv(x)\text{ if }x\in B_3,\\
        x\text{ otherwise,}
    \end{cases}\\
    U_3(x)&=\begin{cases}
        T(x)\text{ if }x\in A_2\sqcup B_3,\\
        T\inv(x)\text{ if }x\in T(A_2)\sqcup T(B_3),\\
        x\text{ otherwise.}
    \end{cases}
    \end{align*}
    The $U_i$ are involutions which are all contained in $\dcfg T$, and
    $$T=U_1|_{A_1\sqcup B_1}\amalg U_2|_{B_2}\amalg U_3|_{A_2\sqcup B_3}\amalg \id_{(X\setminus \supp T)}$$
    so we are done.
\end{proof}

We are now ready to prove the same result as Theorem \ref{theo: cfg T invar flip conj} with $\dcfg T$. 

\begin{theo}\label{theo: dcfg T invar of flip conj}
    Let $T_1,T_2\in\Aut(X,\mu)$ be ergodic transformations. The following are equivalent:
    \begin{enumerate}
        \item $\dcfg {T_1}$ and $\dcfg{T_2}$ are isomorphic as abstract groups,
        \item $\dcfg {T_1}$ and $\dcfg{T_2}$ are isomorphic as topological groups,
        \item $T_1$ and $T_2$ are flip-conjugate.
    \end{enumerate}
\end{theo}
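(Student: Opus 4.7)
The implications $3 \Rightarrow 2 \Rightarrow 1$ are immediate, so the task is to show $1 \Rightarrow 3$. My plan is to mimic the strategy used for Theorem \ref{theo: cfg T invar flip conj}: first invoke Fremlin's Theorem \ref{theo:Fremlin} to realize any abstract isomorphism $\rho : \dcfg{T_1}\to \dcfg{T_2}$ as conjugation by some $S\in\Aut(X,[\mu])$, then upgrade this into an isomorphism of the ambient commensurating full groups using Lemma \ref{lem: fin full closure of derived sg}, and finally appeal to Theorem \ref{theo: cfg T invar flip conj} to conclude.

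The first preparatory step is to check that $\dcfg T$ has many involutions, so that Fremlin's theorem applies. Given a non-null Borel $A\subset X$, Lemma \ref{lem: invo de support donné} produces an involution $U\in[T]$ with $\supp U=A$; Lemma \ref{lem: invos approximées par cocycle borné} then exhibits $U$ as the limit of involutions $U_{A_n}\in[T]_\infty\subset\cfg T$ along an increasing sequence of $U$-invariant subsets, so that $U\in \cfg T$ by closedness (and induction friendliness). The lemma asserting that any involution in $\cfg T$ lies in $\dcfg T$ (equivalently, item~(2) $\subset$ item~(3) of Theorem \ref{theo: plein de sgs égaux au dérivé}) then places $U$ inside $\dcfg T$.

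Now let $\rho : \dcfg{T_1}\to \dcfg{T_2}$ be an abstract group isomorphism. By Theorem \ref{theo:Fremlin}, there exists $S\in \Aut(X,[\mu])$ such that $\rho(R)=SRS^{-1}$ for all $R\in\dcfg{T_1}$, i.e.\ $S\,\dcfg{T_1}\,S^{-1}=\dcfg{T_2}$. The main technical step is to promote this to the equality
\[
S\,\cfg{T_1}\,S^{-1}=\cfg{T_2}.
\]
For this, I would use Lemma \ref{lem: fin full closure of derived sg}, which identifies $\cfg{T_i}$ with the finitely full closure $\mathrm{cl}(\dcfg{T_i})$. Conjugation by the non-singular transformation $S$ is an automorphism of $\Aut(X,[\mu])$ and commutes with all finite cutting-and-pasting operations (since it just transports partitions and their pieces), hence it intertwines the finitely full closure operation. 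Therefore
\[
S\,\cfg{T_1}\,S^{-1} = S\,\mathrm{cl}(\dcfg{T_1})\,S^{-1} = \mathrm{cl}\bigl(S\,\dcfg{T_1}\,S^{-1}\bigr) = \mathrm{cl}(\dcfg{T_2}) = \cfg{T_2}.
\]
In particular $\cfg{T_1}$ and $\cfg{T_2}$ are conjugate as subgroups of $\Aut(X,[\mu])$, hence isomorphic as abstract groups, and Theorem \ref{theo: cfg T invar flip conj} delivers the desired flip-conjugacy of $T_1$ and $T_2$.

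I expect the main obstacle to be the clean execution of the middle step, namely verifying that conjugation by $S$ preserves the finitely full closure. This is intuitively obvious, but one must check carefully that the definition of finite fullness is invariant under an arbitrary non-singular transformation (in particular that $S$ sends $\mc R_{T_1}$-classes to $\mc R_{T_2}$-classes, which follows from the fact that $ST_1S^{-1}\in\cfg{T_2}$ has the same orbits as $T_2$, exactly as in the proof of Theorem \ref{theo: cfg T invar flip conj}). Everything else is a direct application of the machinery already developed in the paper.
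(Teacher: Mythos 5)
Your proof is correct and takes essentially the same route as the paper: check that $\dcfg{T_i}$ has many involutions, apply Fremlin's theorem to realize the isomorphism as conjugation by some $S\in\Aut(X,[\mu])$, use Lemma \ref{lem: fin full closure of derived sg} together with the fact that conjugation commutes with the finitely full closure to upgrade $S\dcfg{T_1}S^{-1}=\dcfg{T_2}$ to $S\cfg{T_1}S^{-1}=\cfg{T_2}$, and conclude via Theorem \ref{theo: cfg T invar flip conj}. The only thing worth flagging is that the worry in your final paragraph is unfounded: the finitely full closure is defined purely in terms of finite cutting-and-pasting inside $\Aut(X,[\mu])$, with no reference to the orbit equivalence relation, so conjugation by any $S\in\Aut(X,[\mu])$ commutes with it automatically, and there is no need to first establish that $ST_1S^{-1}$ has the same orbits as $T_2$.
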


\begin{proof}
    Again, we have 3 implies 2 and 2 implies 1. Now assume 1. Our goal will be to prove that the whole commensurated subgroups are still isomorphic, and then to conclude via Theorem \ref{theo: cfg T invar flip conj}.
    
    In order to do that, note that since every involution of $\cfg T$ are in $\dcfg T(=\ker I)$, so that $\dcfg T$ again has many involutions. We can thus apply Theorem \ref{theo:Fremlin} again, to get $S\in \Aut(X,[\mu])$ such that $\dcfg {T_2}=S\dcfg{T_1}S\inv$.
    
    Now conjugation by an element of $\Aut(X,[\mu])$ preserves finitely full closures, so by Lemma \ref{lem: fin full closure of derived sg}, 
    $$S\cfg {T_1}S\inv=S\mathrm{cl}(\dcfg {T_1})S\inv=\mathrm{cl}(S\dcfg {T_1}S\inv)=\mathrm{cl}(\dcfg{T_2})=\cfg {T_2}.$$
    In particular, $\cfg {T_1}$ and $\cfg {T_2}$ are isomorphic, and we conclude by Theorem \ref{theo: cfg T invar flip conj}.
\end{proof}

\section{Topological simplicity}\label{sec: Topo simp}
We now prove that the derived commensurating full group of $T$ is topologically simple, that is, it has no non trivial closed normal subgroups.\medskip

The following is the equivalent of \cite[3.21]{LM18} in the measure-class preserving case.

\begin{lem}\label{lem: mm mesure+disj-> approx conj}
    Let $T\in\Aut(X,[\mu])$ be ergodic, and let $\nu\sim\mu$ be a $T$-invariant measure. Let $U,V\in\cfg T$ be two involutions with disjoint supports such that $\nu(\supp U)=\nu(\supp V)$. Then $U$ is approximately conjugate to $V$, i.e. there is a sequence $S_n\in \dcfg T$ such that $S_nUS_n\inv \to V$.
    
    If $T$ does not admit any such invariant measures, any $U,V\in \cfg T$ with disjoint (nonempty) supports are approximately conjugates.
\end{lem}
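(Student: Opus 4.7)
The plan is to produce, for each $n$, an involution $S_n \in \cfg T$ such that $S_nUS_n\inv\to V$. Since every involution of $\cfg T$ automatically lies in $\ker I = \dcfg T$ by Theorem \ref{theo: plein de sgs égaux au dérivé}, such $S_n$ will give the desired approximate conjugation in $\dcfg T$. First I would reduce to the bounded-cocycle setting using Lemma \ref{lem: invos approximées par cocycle borné}: write $U=\lim_n U_{A_n}$ and $V=\lim_n V_{B_n}$ with $A_n\uparrow\supp U$, $B_n\uparrow\supp V$ invariant under $U$ and $V$ respectively, and $U_{A_n},V_{B_n}\in[T]_\infty$. In the invariant-measure case, since $\nu$ is preserved by every element of $[T]$ (in particular by $U$ and $V$), one can further shrink the $A_n$ and $B_n$ while preserving invariance so that $\nu(A_n)=\nu(B_n)$, using the equality $\nu(\supp U)=\nu(\supp V)$.

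The involution $S_n$ is then built as follows. Pick fundamental domains $C_n\subset A_n$ for $U|_{A_n}$ and $D_n\subset B_n$ for $V|_{B_n}$, so that $\nu(C_n)=\nu(D_n)$ in the invariant case. Apply the preceding lemma on the pseudo-full group to obtain $\phi_n\in[[T]]$ with $\mathrm{dom}(\phi_n)=C_n$ and $\mathrm{rng}(\phi_n)=D_n$; in the non-invariant-measure case no measure matching is needed and the second clause of that lemma directly supplies $\phi_n$. Extend $\phi_n$ to $\widetilde\phi_n:A_n\to B_n$ by imposing $\widetilde\phi_n(Ux)=V\phi_n(x)$ for $x\in C_n$, so that $\widetilde\phi_n$ intertwines $U_{A_n}$ and $V_{B_n}$. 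Finally let $S_n$ equal $\widetilde\phi_n$ on $A_n$, $\widetilde\phi_n\inv$ on $B_n$, and the identity elsewhere. Then $S_n$ is an involution in $[T]$ satisfying $S_nU_{A_n}S_n\inv=V_{B_n}$.

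The main obstacle is ensuring that $S_n$ lies in $\cfg T$: an arbitrary element of the pseudo-full group need not commensurate $\m N$ along $T$-orbits. To circumvent this I would refine the construction of $\phi_n$ using a Rokhlin-tower argument, so that $\phi_n$ only moves points within finite $T$-intervals. Since $U_{A_n}$ and $V_{B_n}$ have bounded cocycles, this makes $c_{S_n}$ bounded as well, and hence $S_n\in[T]_\infty\subset\cfg T$. The convergence $S_nUS_n\inv\to V$ then follows by combining the equality $S_nU_{A_n}S_n\inv=V_{B_n}$ on $\supp V_{B_n}$; the negligibility in measure of $(\supp U\setminus A_n)\cup(\supp V\setminus B_n)$; and induction friendliness, which gives $V_{B_n}\to V$. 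The bounded-cocycle choice of $\phi_n$ ensures moreover that along almost every $T$-orbit the product $(S_nUS_n\inv)V\inv$ eventually stabilizes $\m N$, so the convergence takes place in the $\cfg T$-topology via Lemma \ref{lem: caracterisation convergence szn}.
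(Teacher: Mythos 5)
Your overall strategy (approximate $U$ and $V$ by bounded-cocycle involutions and conjugate the approximations) is sound, and the reduction of convergence to induction friendliness is correct. The genuine gap is in the sentence where you claim to ``refine the construction of $\phi_n$ using a Rokhlin-tower argument, so that $\phi_n$ only moves points within finite $T$-intervals.'' What you actually need is a partial bijection $\phi_n\in[[T]]$ with $\mathrm{dom}(\phi_n)=C_n$, $\mathrm{rng}(\phi_n)=D_n$ \emph{and bounded $T$-cocycle}, i.e.\ an extension in $[T]_\infty$. The pseudo-full-group lemma you invoke (from \cite{HIK74}) gives existence with no control whatsoever on the cocycle, and bounded cocycle is a real constraint: a bounded-displacement matching of $C_n$ against $D_n$ along a $T$-orbit forces the counting discrepancy $\bigl||C_n\cap[a,b]|-|D_n\cap[a,b]|\bigr|$ to be uniformly bounded over all orbit intervals $[a,b]$. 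Two disjoint sets of equal $\nu$-measure need not satisfy this: placing the mass of $C_n$ near the bottom and that of $D_n$ near the top of Rokhlin towers of rapidly growing heights makes the discrepancy unbounded, and the sets $C_n,D_n$ you start from are entirely prescribed by the cocycle sizes of $U$ and $V$, which you cannot alter. So a ``Rokhlin-tower refinement'' does not repair this, and the step is unjustified.

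The paper sidesteps exactly this difficulty by reversing the order of operations. It first applies Lemma~\ref{lem: invo qui échange A et B} to obtain a single involution $S\in[T]$ with $S(A)=B$, where $A,B$ are fundamental domains for $U,V$; this $S$ is \emph{allowed} to have unbounded cocycle. It then defines the truncation sets
$A_n=\{x\in A:|c_S(x)|<n\}$, $B_n=S(A_n)$, so that $S_{A_n\sqcup B_n}\in[T]_\infty$ by construction, and builds $S_n$ from the pieces $S|_{A_n\sqcup B_n}$, $VSU|_{U(A_n)}$, $USV|_{V(B_n)}$ and the identity; finite fullness of $\cfg T$ then gives $S_n\in\cfg T$ automatically, and induction friendliness yields $S_nUS_n\to V$. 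In short: rather than fixing the sets to be matched and then hoping for a bounded-cocycle matching, the paper fixes an arbitrary $[T]$-matching and lets its cocycle dictate which sets get matched at stage $n$. If you replace your $\phi_n$-construction by this truncation of a single conjugating involution, the rest of your argument goes through.
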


\begin{proof}
    We begin by choosing $A,B$ such that $\supp U=A\sqcup U(A)$ and $\supp V=B\sqcup V(B)$. In either case, $A$ and $B$ still satisfy the hypotheses of Lemma \ref{lem: invo qui échange A et B}, so that there is an involution $S\in[T]$ such that $S(A)=B$ and $\supp S=A\sqcup B$. We now proceed in much the same way as in \cite[Lemma 3.21]{LM18} to conclude.
    
    For $n\in\m N$, let $A_n=\{x\in A,|c_S(x)|<n\}$ and $B_n=\{x\in B,|c_S(x)|<n\}$, so that $S(A_n)=B_n$, $S_{A_n\cup B_n}\in [T]_\infty$ and $\bigcup_n A_n=A,\:\bigcup_n B_n=B$. Now define a sequence of involutions $S_n$ by
    $$S_n(x)=\left\{
    \begin{array}{cl}
        S(x) &\text{ if } x\in A_n\sqcup B_n,\\
        VSU(x) &\text{ if } x\in U(A_n),\\
        USV(x) &\text{ if } x\in V(B_n),\\
        x &\text{ otherwise.}
    \end{array}\right.$$
    Note that $S_n\in \cfg T$ by finite fullness. Moreover, a straightforward computation yields that
    $$S_nUS_n(x)=\left\{
    \begin{array}{cl}
        U(x) &\text{ if } x\in A\cup U(A)\setminus (A_n\cup U(A_n)),\\
        V(x) &\text{ if } x\in B_n\cup V(B_n),\\
        x &\text{ otherwise.}
    \end{array}\right.$$
    Hence $S_nUS_n=V_{B_n\cup V(B_n)}U_{(A\cup U(A))\setminus (A_n\cup U(A_n))}$. Now by induction friendliness, $V_{B_n\cup V(B_n)}\to V$ and $U_{(A\cup U(A))\setminus (A_n\cup U(A_n))}\to \id,$ so that $S_nUS_n\to V$ as hoped.
\end{proof}

\begin{lem}\label{lem: <<invo>>=tout le monde, type III}
    Let $T\in\Aut(X,[\mu])$ be ergodic with no invariant measures, and let $U\in\cfg T$ be an involution such that $\supp U$ is neither null nor conull. Then the closure of the subgroup generated by conjugates of U by elements of $\dcfg T$ is $\dcfg T$ itself.
\end{lem}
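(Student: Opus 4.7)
The plan is to show that the closed subgroup $N$ of $\dcfg T$ generated by the $\dcfg T$-conjugates of $U$ equals $\dcfg T$; observe that $N$ is automatically normal in $\dcfg T$ since conjugates of conjugates are still conjugates. By Theorem \ref{theo: plein de sgs égaux au dérivé}, $\dcfg T$ is topologically generated by its involutions, so it suffices to show every involution $V\in\cfg T$ lies in $N$. The key input is Lemma \ref{lem: mm mesure+disj-> approx conj}: since $T$ admits no invariant measure, any two involutions of $\cfg T$ with disjoint non-null supports are approximately conjugate via $\dcfg T$. In particular, every involution of $\cfg T$ whose support is essentially disjoint from $\supp U$ is a limit of $\dcfg T$-conjugates of $U$, hence lies in $N$.

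To handle the general case, I will decompose $V$ into small pieces and conjugate each piece onto an involution whose support avoids $\supp U$. Write $\supp V = B \sqcup V(B)$ and partition $B = \bigsqcup_n B_n$ finely enough that for every $n$ the set $X\setminus(\supp U\cup B_n\sqcup V(B_n))$ remains non-null — this is possible because $\supp U$ is not conull, so shrinking the reference $\mu$-measure of $B_n$ suffices. Let $V_n$ denote the induced involution of $V$ on $B_n\sqcup V(B_n)$; finite fullness puts each $V_n$ in $\cfg T$, and induction friendliness gives $V=\prod_n V_n$ as a convergent product of commuting involutions. It therefore remains to show that each $V_n$ lies in $N$: for this I plan to produce an involution $S_n\in\dcfg T$ mapping $\supp V_n$ into $X\setminus(\supp U\cup\supp V_n)$, so that $S_nV_nS_n\inv$ is an involution of $\cfg T$ with support disjoint from $\supp U$, hence in $N$; normality of $N$ in $\dcfg T$ then yields $V_n=S_n\inv(S_nV_nS_n\inv)S_n\in N$.

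To construct $S_n$ we use the Hajian–Ito–Kakutani type transitivity underlying the lemma preceding Lemma \ref{lem: invo qui échange A et B}: absence of an invariant measure ensures the existence of a partial bijection in $[[T]]$ from $\supp V_n$ onto a non-null subset of $X\setminus(\supp U\cup\supp V_n)$, and this partial bijection extends to an involution $S_n\in[T]$. The main obstacle will be to ensure that $S_n$ actually belongs to $\cfg T$ rather than merely to $[T]$: by choosing the $B_n$ so that their intersection with each $T$-orbit is finite (possibly after further decomposing $V_n$ via Lemma \ref{lem: invos approximées par cocycle borné} so that its cocycle is bounded), one arranges the partial bijection to have bounded cocycle on each orbit, placing $S_n$ in $[T]_\infty\subseteq\cfg T$. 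Once this is achieved, involutivity forces $I(S_n)=0$ so that $S_n\in\dcfg T$; hence $V_n\in N$, and passing to the limit through induction friendliness yields $V\in N$, completing the proof.
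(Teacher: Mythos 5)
Your overall scaffolding is sound (reduce to involutions, use normality of $N$, and use the type-III case of Lemma~\ref{lem: mm mesure+disj-> approx conj} to place every involution with support disjoint from $\supp U$ inside $N$), and you correctly identified the crux of your own approach: producing a conjugator $S_n$ that actually lies in $\cfg T$. That is exactly where the gap is, and the proposed fix does not close it. The Hajian--Ito--Kakutani lemma produces $\phi\in[[T]]$ with prescribed domain and range, but gives no control whatsoever on the cocycle; the involution $S_n$ obtained by extending $\phi$ is only in $[T]$, and involutions in $[T]$ need not lie in $\cfg T$ (a product of transpositions crossing the origin infinitely often fails commensuration). Truncating $S_n$ via Lemma~\ref{lem: invos approximées par cocycle borné} would place the truncation in $[T]_\infty\subset\cfg T$, but then it no longer moves all of $\supp V_n$, so $S_nV_nS_n^{-1}$ no longer has support disjoint from $\supp U$. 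And refining the decomposition of $V$ or bounding the cocycle of $V_n$ is irrelevant: the uncontrolled quantity is the cocycle of the HIK partial bijection from $\supp V_n$ to the target, not the cocycle of $V_n$.

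The fix is to drop the insistence on a single conjugator in $\dcfg T$ and use Lemma~\ref{lem: mm mesure+disj-> approx conj} directly as a relation between involutions, which is what the paper does. Since $N$ is closed and normal in $\dcfg T$, if $W\in N$ and $W$ is approximately $\dcfg T$-conjugate to $V'$ then $V'\in N$; so it suffices to exhibit a short chain of involutions with consecutive pairs having disjoint supports. Concretely the paper shows: if $\supp V$ is not conull, pick an involution $W$ supported in $X\setminus(\supp U\cup\supp V)$ when that set is non-null (giving $U\sim W\sim V$), or else insert one more intermediate involution when $X\setminus\supp U\subset\supp V$; and if $\supp V$ is conull, split $V$ into just two induced involutions with non-conull supports and apply the first case to each. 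This avoids constructing any explicit conjugator in $\cfg T$, avoids the infinite decomposition and the passage to the limit, and uses Lemma~\ref{lem: mm mesure+disj-> approx conj} as the only real tool. Your plan becomes correct once you replace ``produce $S_n\in\dcfg T$ mapping $\supp V_n$ off $\supp U$'' with ``pick an involution $W_n$ supported off both $\supp U$ and $\supp V_n$, and note $U\sim W_n\sim V_n$''; at that point the fine decomposition of $V$ into infinitely many $V_n$ is unnecessary.
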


\begin{proof}
    Let $G$ be the closure of the subgroup generated by conjugates of $U$ by elements of $\dcfg T$, and write $S\sim S'$ to say that $S$ and $S'$ are approximately conjugated. Notice that if we have a chain $U\sim S_1\sim\cdots \sim S_k\sim V$, then $V\in G$.
    
    Since $\dcfg T$ is topologically generated by involutions, we only need to prove that any involution $V\in\cfg T$ is in $G$.
    
    First assume that $\supp V\ne X$, and let $A:=X\setminus \supp U$. If $B:=A\setminus \supp V$ is non-null, then let $W\in\cfg T$ be any involution supported on $B$ ($\cfg T$ has many involutions). Then $U\sim W\sim V$ by Lemma \ref{lem: mm mesure+disj-> approx conj}.
    
    Otherwise $A\subset \supp V$. Then let $S$ be any involution supported on $A$, and $S'$ be any involution supported on $X\setminus \supp V$. Then $U\sim S\sim S'\sim V$ and we are done.
    
    Now for the case when $\supp V$ is conull, let $A$ be a fundamental domain for $V$, and split $A=B\sqcup C$ with $B,C$ non-null. Then the first part of the proof shows that $U\sim V_{B\sqcup V(B)}$ and $U\sim V_{C\sqcup V(C)}$. As $V=V_{B\sqcup V(B)}V_{C\sqcup V(C)}$, we are done.
\end{proof}

\begin{lem}\label{lem: <<invo>>=tout le monde, type II}
    Let $T\in\Aut(X,[\mu])$ be ergodic, and let $\nu\sim\mu$ be a $T$-invariant measure. Let $U\in\cfg T$ be an involution with $\nu(X\setminus \supp U)\ge \nu(\supp U)$. Then the closure of the subgroup generated by conjugates of U by elements of $\dcfg T$ is $\dcfg T$ itself.
\end{lem}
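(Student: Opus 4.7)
We follow the structure of the proof of Lemma \ref{lem: <<invo>>=tout le monde, type III}, with additional care because Lemma \ref{lem: mm mesure+disj-> approx conj} now requires equal $\nu$-measures of supports, not just disjointness. Let $G$ denote the closed normal subgroup of $\dcfg T$ generated by $\dcfg T$-conjugates of $U$. By Theorem \ref{theo: plein de sgs égaux au dérivé}, $\dcfg T$ is topologically generated by involutions, so it suffices to show that every involution $V\in\cfg T$ lies in $G$. Write $\sim$ for approximate conjugacy by $\dcfg T$: if $A\sim B$ and $A\in G$, then $B\in G$. The hypothesis $\nu(X\setminus\supp U)\ge\nu(\supp U)$ provides the free room outside $\supp U$ needed to build all the intermediate involutions of the argument.

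\textbf{Step 1 (Halves of $U$ in $G$).} For any $U$-invariant measurable $A\subset\supp U$ and any $A'\subset X\setminus\supp U$ with $A\cap A'=\emptyset$ and $\nu(A')=\nu(A)$, pick an involution $S\in\cfg T$ supported on $A\sqcup A'$ swapping $A$ and $A'$. Because $A$ is $U$-invariant and $A'\cap\supp U=\emptyset$, a direct orbit-by-orbit computation yields $SUS\inv=U_{\supp U\setminus A}\cdot SU_AS\inv$, a product of two commuting involutions on the disjoint supports $\supp U\setminus A$ and $A'$. Then $U$ and $SUS\inv$ commute, so
\[
W_A \;:=\; U\cdot SUS\inv \;=\; U_A\cdot SU_AS\inv
\]
is an involution in $G$ supported on $A\sqcup A'$ and restricting to $U_A$ on $A$, of $\nu$-mass $2\nu(A)$.

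\textbf{Steps 2 and 3 (Reducing arbitrary involutions to Step 1).} First, for an involution $W'\in\cfg T$ with $\supp W'\cap\supp U=\emptyset$ and $\nu(\supp W')\le\tfrac{2}{3}\nu(\supp U)$, apply Step 1 with $\nu(A)=\nu(\supp W')/2$ and $A'\subset X\setminus(\supp U\cup\supp W')$: this fits because $\nu(X\setminus(\supp U\cup\supp W'))\ge\nu(\supp U)-\nu(\supp W')\ge\nu(\supp W')/2$, and the resulting $W_A\in G$ satisfies $W'\sim W_A$ (disjoint supports, equal $\nu$-mass), hence $W'\in G$. Next, for an arbitrary involution $V\in\cfg T$, pick a $V$-invariant countable Borel partition $\supp V=\bigsqcup_i A_i$ fine enough that each $V_i:=V_{A_i}$ has $\nu(\supp V_i)\le\nu(\supp U)/3$; the $V_i$ pairwise commute, belong to $\cfg T$, and $V_1\cdots V_n\to V$ by induction friendliness. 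For each $i$, pick an involution $W_i'\in\cfg T$ supported in $X\setminus(\supp U\cup\supp V_i)$ with $\nu(\supp W_i')=\nu(\supp V_i)$ (available because the free room there is $\ge\nu(\supp U)-\nu(\supp V_i)\ge\nu(\supp V_i)$, and $\cfg T$ has many involutions). Then $W_i'\in G$ by the first case, and $V_i\sim W_i'$ gives $V_i\in G$; passing to the limit yields $V\in G$.

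\textbf{Main difficulty.} The delicate point is securing the swap involution $S$ in Step 1 as an element of $\cfg T$, rather than merely of $[T]$. Lemma \ref{lem: invo qui échange A et B} furnishes $S\in[T]$, but its cocycle along $T$-orbits may be unbounded, so that $S$ need not commensurate $\m N$ on a.e. orbit. One must therefore choose $A'$ adapted to the $T$-orbit structure of $A$ (for instance, via Rokhlin towers and bounded powers of $T$, matching $A$ and $A'$ on bounded $T$-segments), so that $S$ can be realized with essentially bounded cocycle and thus placed in $[T]_\infty\subset\cfg T$. This subtlety is absent in the type III case, where no measure constraint enters the approximate conjugation lemma, and it is where the $T$-invariance of $\nu$ is essentially used.
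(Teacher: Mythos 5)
Your strategy closely parallels the paper's: multiply $U$ by a suitable conjugate to produce involutions in $G$ of prescribed small $\nu$-measure, then decompose an arbitrary involution into small $\nu$-commensurate pieces and approximately conjugate each piece onto such a member of $G$. Your Step~1 computation that $W_A=U\cdot SUS\inv=U_A\cdot S U_A S\inv$ is an involution in $G$ supported on $A\sqcup A'$ is correct, \emph{granted} that $S\in\cfg T$. That granting is exactly the gap you flag, and it is real: Lemma~\ref{lem: invo qui échange A et B} produces $S\in[T]$ only, not $S\in\cfg T$, and for an arbitrary $U$-invariant $A$ the swap need not commensurate $\m N$. Your patch sketch (match $A$ and $A'$ along bounded $T$-segments) points in the right direction but is not carried out, and as stated it is not obvious that a suitable $A'$ of the required $\nu$-measure can be found disjoint from $\supp U\cup\supp W'$ \emph{and} at bounded $T$-distance from $A$.

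The paper sidesteps this by fixing \emph{one} global swap $V\in[T]$ with $\supp V=\supp U\sqcup V(\supp U)$ (using $\nu(X\setminus\supp U)\ge\nu(\supp U)$), and then restricting attention to the $U$-invariant set $B'=B_N=\{x\in\supp U:\max(|c_V(x)|,|c_V(Ux)|)<N\}$, which has positive measure for $N$ large since $\bigcup_n B_n=\supp U$. Any restriction $V_{\tilde E}$ with $\tilde E\subset B'\sqcup V(B')$ then has essentially bounded cocycle, hence lies in $[T]_\infty\subset\cfg T$, and so $U\cdot(V_{\tilde E}\,U\,V_{\tilde E})\in G$ is an involution supported on $\tilde E$; its $\nu$-mass is freely adjustable by the choice of $E$. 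The small pieces $D_n$ of the target involution are then required to satisfy $\nu(D_n)\le\min(\nu(A'),1)/20$ where $A'=A\cap B'$, so that they always fit inside the good portion. Finally, to guarantee that $\tilde E=\Gamma(E)$ is disjoint from $\supp W_n$, the paper introduces the finite dihedral group $\Gamma=\langle U,V\rangle\simeq D_8$ and picks $E\subset A'$ disjoint from $\Gamma(\supp W_n)$; you avoid needing this because you choose a fresh $A'$ each time, but you pay for that freedom with the unresolved boundedness issue. If you adopt the paper's trick of restricting to the bounded-cocycle portion $B'$ \emph{before} choosing your $A$ and sizing the $V_i$ against $\nu(A')$ rather than $\nu(\supp U)$, your argument goes through.
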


\begin{proof}
    Let $B=\supp U$, and $A$ such that $B=A\sqcup U(A)$ and let $V\in [T]$ be an involution such that $\supp V=B\sqcup V(B)$, which exists by Lemma \ref{lem: invo qui échange A et B} and the assumption on $\supp U$.
    
    For $n\in \m N$, let $B_n$ be defined by
    $$B_n=\{x\in B,\max(|c_V(x)|,|c_V(U(x))|)<n\}.$$
    Since $\bigcup_nB_n=B$, there is $N$ such that $B':=B_N$ is not null. Note that $B'$ is $U$-invariant, so that $U':=U_{B'}$ is still an involution, and $A':=A\cap B'$ is still a fundamental domain for $U'$ restricted to its support.\medskip

    Let $G$ be the closure of the subgroup generated by conjugates of $U$ by elements of $\dcfg T$. 
    Also, let $\Gamma=\langle U,V\rangle$. Note that $VUV$ is an involution supported on $V(B)$, which is disjoint of $\supp U$, so that $UVUV=VUVU$ is again an involution. In particular, $(UV)^4=\id$, and $\Gamma\simeq D_8\simeq \m Z/4\m Z\rtimes\m Z/2\m Z$. In particular, $|\Gamma|=8$.
    
    Now since $\dcfg T$ is topologically generated by involutions, we only need to prove that any involution is in $G$. Thus let $W\in\cfg T$ be any involution, and $D$ such that $\supp W=D\sqcup W(D)$.
    
    We can decompose 
    $$D=\bigsqcup_{n\in\m N} D_n$$
    with $\nu(D_n)\le \min(\nu(A'),1)/20$ for each $n$.
    Letting $W_n:=W_{D_n\sqcup W(D_n)}$, we have 
    $$W=\lim_{n\to\infty}\prod_{k=0}^nW_n$$ by induction friendliness.
    Hence we only need to prove that $W_n\in G$ for all $n$. To this end, fix any $n\in\m N$, and let $E\subset A'$ be such that $\nu(E)=\nu(D_n)/2$ and $E$ is disjoint from $\Gamma(D_n\sqcup W(D_n))$. Such an $E$ exists because $\nu(\Gamma(D_n\sqcup W(D_n)))\le 16\nu(D_n)<\infty$, so that $\nu(A'\setminus\Gamma(D_n\sqcup W(D_n)))\ge 4\nu(D_n)\ge \nu(D_n)/2$. We thus also get that $\Gamma(E)$ is disjoint from $D_n\sqcup W(D_n)=\supp W_n$.
    
    Furthermore, note that since $E\subset B$, $\tilde E:=E\sqcup U(E)\sqcup V(E\sqcup U(E))$ is $\Gamma$-invariant. Let $\tilde V$ be the involution $\tilde V=V_{\tilde E}$, and note that $\tilde V\in \cfg T$ by the fact that $E\sqcup U(E)\subset B'$. An easy computation then yields $U\tilde VU\tilde V=U_{\tilde E}$, so that $U_{\tilde E}\in G$. Moreover, 
    $$\nu(\supp U_{\tilde E})=\nu(\tilde E)=4\nu(E)=2\nu(D_n)=\nu(D_n\sqcup W(D_n))=\nu(\supp W_n).$$
    Note that by construction, $\tilde E$ and $\supp W_n$ are disjoint, so we may infer by Lemma \ref{lem: mm mesure+disj-> approx conj} that $W_n$ is approximately conjugate to $U_{\tilde E}$. Since $U_{\tilde E}\in G$, we get that $W_n\in G$, which concludes the proof.
\end{proof}

\begin{theo}\label{theo: dcfg is top simple}
    For any ergodic $T\in\Aut(X,[\mu])$, $\dcfg T$ is topologically simple.
\end{theo}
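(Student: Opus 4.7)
The plan is the familiar strategy used to prove topological simplicity of derived full groups: starting from any non-trivial closed normal subgroup $N$ of $\dcfg T$, I will construct an involution $V \in N$ whose support can be controlled finely enough to satisfy the hypothesis of Lemma \ref{lem: <<invo>>=tout le monde, type II} (if $T$ preserves an equivalent $\sigma$-finite measure $\nu$) or Lemma \ref{lem: <<invo>>=tout le monde, type III} (otherwise). Either lemma then forces $\dcfg T$ to be contained in $N$, so $N = \dcfg T$.

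Fix $S \in N \setminus \{\id\}$. Since $S$ is fixed-point free on $\supp S$, a standard argument using a countable Borel generator (separating points of $X$) produces a non-null Borel set $A_0 \subseteq \supp S$ with $A_0 \cap S(A_0) = \emptyset$. I then shrink $A_0$ to a non-null $A \subseteq A_0$ meeting the size constraint required for the applicable lemma: in the type II case, I choose $A$ so that $\nu(A)$ is finite and $\nu(A) \le \nu(X)/4$, which is possible by non-atomicity and $\sigma$-finiteness of $\nu$, and is automatic when $\nu(X) = \infty$ as soon as $\nu(A) < \infty$; in the type III case I arrange $\mu(A \sqcup S(A)) < \mu(X)$ so that $A \sqcup S(A)$ is not conull. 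By Lemma \ref{lem: invo de support donné}, there is a non-trivial involution $U \in [T] \subseteq \cfg T$ with $\supp U \subseteq A$, and any such involution lies in $\dcfg T$ by Theorem \ref{theo: plein de sgs égaux au dérivé}.

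Now set $V := [S,U] = (SUS\inv)\cdot U$. The involutions $SUS\inv$ and $U$ are supported respectively in $S(\supp U) \subseteq S(A)$ and $\supp U \subseteq A$, which are disjoint; hence $V$ is itself a non-trivial involution, whose support equals $\supp U \sqcup S(\supp U) \subseteq A \sqcup S(A)$. Normality of $N$ inside $\dcfg T$, together with $S, U \in \dcfg T$, yields $V \in N$. In the type II case, since $S \in [T]$ preserves $\nu$, we have $\nu(\supp V) = 2\nu(\supp U) \le 2\nu(A) \le \nu(X)/2$, which gives the hypothesis of Lemma \ref{lem: <<invo>>=tout le monde, type II}; in the type III case, $\supp V \subseteq A \sqcup S(A)$ is neither null (since $A$ is non-null) nor conull (by our choice of $A$), giving the hypothesis of Lemma \ref{lem: <<invo>>=tout le monde, type III}.

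Either lemma then tells us that the closure of the subgroup of $\dcfg T$ generated by the $\dcfg T$-conjugates of $V$ equals $\dcfg T$. Since $V \in N$ and $N$ is closed and normal in $\dcfg T$, $N$ contains this closure; hence $N = \dcfg T$. The only delicate point in this plan is producing the non-null disjointness set $A$ and calibrating its size to the two separate lemmas, but this is routine once one remembers that $\supp S$ is non-null and $S$ acts freely there.
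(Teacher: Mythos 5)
Your argument follows the paper's proof essentially verbatim: pick $S$ non-trivial in the normal subgroup, produce a non-null $A$ with $A\cap S(A)=\emptyset$ and a suitable measure bound, take an involution $U$ supported in $A$, form $V=U\,SUS\inv=[S,U]$ (these are the same element since $U$ and $SUS\inv$ have disjoint supports), and feed $V$ into Lemma \ref{lem: <<invo>>=tout le monde, type II} or Lemma \ref{lem: <<invo>>=tout le monde, type III}. The one slip is the assertion ``$U\in[T]\subseteq\cfg T$'': the full group $[T]$ is \emph{not} contained in $\cfg T$ (it has elements with arbitrarily wild cocycles), so Lemma \ref{lem: invo de support donné} alone does not hand you an involution in $\cfg T$. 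The fix is immediate and is exactly what the paper does elsewhere: $\cfg T$ has many involutions (take the involution $U\in[T]$ supported on $A$ from Lemma \ref{lem: invo de support donné} and, via Lemma \ref{lem: invos approximées par cocycle borné}, restrict to $U_{A_n}\in[T]_\infty\subset\cfg T$ for $n$ large enough that $A_n$ is non-null), and any involution of $\cfg T$ lies in $\dcfg T=\ker I$. With that correction your proof is complete and coincides with the paper's.
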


\begin{proof}
    Let $S\in \dcfg T$ be non-trivial. We will prove that there exists an involution $V\in\ll \!S\!\gg$ which satisfies the hypotheses of either Lemma \ref{lem: <<invo>>=tout le monde, type II} or \ref{lem: <<invo>>=tout le monde, type III}, depending on the type of $T$.
    
    Let $\nu\sim\mu$ be a $T$-invariant measure. Up to rescaling, we may assume $\nu(X)\in\{1,\infty\}$. We then find some $A\subset X$ such that $A\cap S(A)=\emptyset$ and $0<\nu(A)\le 1/4$, which is always possible since $S\ne\id$.
    
    Now let $U\in\cfg T$ be an involution supported on $A$, so that $SUS\inv$ and $U$ are involutions in $\cfg T$ with disjoint support, so that $V=USUS\inv$ is an involution with $\supp V\subset A\sqcup S(A)$. Moreover, $USU$ and $S\inv$ are both members of $\ll \!S\!\gg$, so that $V\in \ll \!S\!\gg$. We now apply Lemma \ref{lem: <<invo>>=tout le monde, type II} to conclude.\medskip

    If $T$ has no invariant measures in the class of $\mu$, the proof is basically the same. Here we only need to find $A$ such that $A\cap S(A)=\emptyset$ and $A\cup S(A)$ isn't conull to be able to apply Lemma \ref{lem: <<invo>>=tout le monde, type III}.
\end{proof}

\bibliographystyle{alpha}  
\bibliography{biblio}

\end{document}